\def\blue{\color{blue}}
\def\black{\color{black}}
\DeclareMathOperator{\Id}{\rm Id}
\newtheorem{theorem}{Theorem}[section]
\newtheorem{corollary}[theorem]{Corollary}
\newtheorem{example}[theorem]{Example}
\theoremstyle{definition}
\theoremstyle{remark}
\newtheorem{remark}[theorem]{Remark}
\def\energy{\mathcal{E}}
\def\RTres{\mathbb{R}^3}
\def\HTres{\mathbb{H}^3}
\def\EUnoUno{E(1,1)}
\def\EDos{\widetilde{E}(2)}
\def\Xmenos{-6}
\def\Xmenost{\Xmenos+1.8}
\def\Xmas{6}
\def\Xmast{6-0.4}
\def\XUnCuarto{\Xmast}
\def\XTresOnceavos{\XUnCuarto-0.55}
\def\XCincoMenosDosRaizSiete{\XTresOnceavos-0.55}
\def\XTresDecimos{\XCincoMenosDosRaizSiete-0.55}
\def\XUnTercio{\XTresDecimos-0.55}
\def\XVeintiunCincuentaydosavos{\XUnTercio-0.6}
\def\XSieteDieciseisavos{\XVeintiunCincuentaydosavos-0.55}
\def\XCincoOnceavos{\XSieteDieciseisavos-0.55}
\def\XUnMedio{\XCincoOnceavos-0.55}
\def\XSieteDecimos{\XUnMedio-0.75}
\def\XTresCuartos{\XSieteDecimos-0.55}
\def\XUno{\XTresCuartos-0.8}
\def\XTresMedios{\XUno-1.05}
\def\XTres{\Xmenost+0.7}
\def\XGrupo{\Xmenos+0.5}
\def\XNumeroGrupo{\XGrupo+0.8}
\def\YPrimerGrupo{-0.45}
\title[Homogeneous critical metrics for quadratic curvature functionals]{Four-dimensional homogeneous critical metrics for quadratic curvature functionals}
\author[Brozos-V\'azquez, Caeiro-Oliveira, Garc\'ia-R\'io, Vázquez-Lorenzo]{M. Brozos-V\'azquez, S. Caeiro-Oliveira, E. Garc\'ia-R\'io, R. Vázquez-Lorenzo}
\address{MBV: CITMAga, 15782 Santiago de Compostela, España}	
\address{\phantom{MBV:}
	Universidade da Coru\~na, Campus Industrial de Ferrol,
	 Department of Mathematics,  15403 Ferrol,  Spain}
\email{miguel.brozos.vazquez@udc.gal}
\address{SCO: Department of Mathematics, Universidade de Vigo, Campus Auga, 32004 Ourense, Spain}
\email{sandro.caeiro@uvigo.gal}
\address{EGR: CITMAga, 15782 Santiago de Compostela, España}
\address{\phantom{EGR:}
	Faculty of Mathematics,
	University of Santiago de Compostela,
	15782 Santiago de Compostela, Spain}
\email{eduardo.garcia.rio@usc.es}
\address{RVL: I.E.S de Ribadeo Dionisio Gamallo, 27700 Ribadeo,  Spain}
\email{ravazlor@edu.xunta.gal}
\thanks{Supported by projects PID2019-105138GB-C21(AEI/FEDER, Spain) and ED431C 2019/10, ED431F 2020/04  (Xunta de Galicia, Spain). The second named author S. C.-O. acknowledges support from the Ramón y Cajal grant RYC-2017-22490 (AEI, Spain).}
\subjclass[2020]{53C25, 53C30, 53C20}
\date{}
\keywords{Quadratic curvature functional, homogeneous space, critical metric, Ricci soliton}
\begin{document}	
\begin{abstract}
	We determine all homogeneous metrics which are critical for some quadratic curvature functional in dimension four.
\end{abstract}

\maketitle

\section{Introduction}
Geometric structures often arise as critical points of geometric functionals. Einstein metrics, which are critical for the Hilbert-Einstein functional constrained to variations with constant volume, are well-known examples and play a distinguished role in mathematics and physics \cite{Besse}. The Hilbert-Einstein functional is defined in terms of the scalar curvature, which generates the space of first-order curvature invariants. From this perspective, it is natural to investigate other functionals which are also defined by Riemannian invariants, such as quadratic curvature functionals \cite{Besse,viaclovsky}.

Describing critical metrics for a given functional is often an unfeasible task. Therefore, we focus on homogeneous manifolds and aim at determining all metrics which are critical for a quadratic curvature functional in dimension four. Although the homogeneous context is very rigid for Einstein metrics, since every four-dimensional homogeneous Einstein manifold is necessarily symmetric \cite{Jensen}, we show that the class of critical metrics for quadratic curvature functionals is very rich and there exist examples exhibiting a variety of features. As a consequence of the provided descriptions, we find an interesting relation between critical metrics with zero energy and Ricci solitons.

\subsection{Critical metrics for quadratic curvature functionals}

The space of scalar quadratic curvature invariants of a Riemannian manifold is generated by $\{\tau^2, \Delta\tau,$ $\|\rho\|^2, \|R\|^2\}$, where $\Delta\tau$ denotes the Laplacian of the scalar curvature, $R$ is the curvature tensor and $\rho$ stands for the Ricci tensor. 

Dimension four is special in the study of quadratic curvature functionals due to the Chern-Gauss-Bonnet Theorem, which states that $\int_M\{\| R\|^2-4\|\rho\|^2+\tau^2\}\operatorname{dvol}_g=8\pi^2\chi(M)$. Thus, every quadratic curvature functional is equivalent to (see \cite{CM, GV1})
\begin{equation}\label{eq:functional}
\mathcal{S}: g\mapsto \mathcal{S}(g)=\int_M\,\tau^2\operatorname{dvol}_g\,,
\quad\text{or}\quad
\mathcal{F}_t: g\mapsto \mathcal{F}_t(g)=\int_M\{\|\rho\|^2+t\,\tau^2\}\operatorname{dvol}_g\,, 
\end{equation}
for some $t\in\mathbb{R}$.

Particular instances of the functionals $\mathcal{F}_t$ have been extensively studied. Four-dimensional critical metrics for the $L^2$-norm of the Weyl tensor (i.e., Bach-flat metrics) are $\mathcal{F}_{-1/3}$-critical. Note that the functional defined by the four-dimensional Branson $Q$-curvature, $Q=\frac{1}{12}\left(-\Delta\tau-3\|\rho\|^2+\tau^2\right)$, or that given by the second symmetric elementary function of the Schouten tensor, $\sigma_2(A)=-\frac{1}{2(n-2)^2}\|\rho\|^2+\frac{n}{8(n-1)(n-2)^2}\tau^2$, are also equivalent to $\mathcal{F}_{-1/3}$ when we fix dimension $n=4$ (see \cite{BO, GV}).
The functional $g\mapsto\int_M\,\| R\|^2\operatorname{dvol}_g$ is equivalent to  $\mathcal{F}_{-1/4}$  (which coincides with the functional defined by the $L^2$-norm of the trace-free Ricci tensor). Other geometric functionals like the Schouten functional (defined by the $L^2$-norm of the Schouten tensor), or the volumal functional (determined by the quadratic invariant $18\Delta\tau-3\| R\|^2+8\|\rho\|^2+5\tau^2$) are equivalent to $\mathcal{F}_t$ for $t= -2/9$ and $t=-2$, respectively (see \cite{Gray, HNS}). The spectral functional, which is determined by the quadratic invariant $2\| R\|^2-2\|\rho\|^2+5\tau^2+12\Delta\tau$ (see \cite{Gray}), is equivalent to $\mathcal{F}_{1/2}$ in dimension four.

The gradients of the functionals in \eqref{eq:functional} follow from the work of Berger \cite{Berger}, which shows that the corresponding Euler-Lagrange equations for these functionals restricted to metrics of volume one in dimension $n$ become
\begin{equation}\label{eq:S}
\nabla^2 \tau -\tfrac{1}{n}\Delta \tau\, g - \tau \left(\rho-\tfrac{1}{n}\tau\, g \right)=0
\end{equation}
for the $\mathcal{S}$-functional, and
\begin{equation}\label{eq:E-L}
-\Delta \rho 
+(2t+1)\nabla^2 \tau 
-\tfrac{2t}{n} (\Delta \tau)g 
+\tfrac{2}{n} (\|\rho\|^2 +t\tau^2) g 
-2 R[\rho] 
-2t\tau\rho=0
\end{equation}
for the $\mathcal{F}_t$-functionals,
where $\nabla^2 \tau$ denotes the Hessian of the scalar curvature and $R[\rho]_{ij}=R_{ikj\ell}\rho^{k\ell}$.  
It immediately follows from \eqref{eq:S} and \eqref{eq:E-L} that Einstein metrics are critical for all quadratic curvature functionals in dimensions three and four. The product $\mathbb{S}^2\times\mathbb{H}^2$ carries a non-Einstein homogeneous metric which is also critical for all quadratic curvature functionals. We refer to \cite{BLMS} for examples of metrics which are critical for all quadratic curvature functionals with non-constant scalar curvature.

An important object that plays a role in the study of variations is the energy of the functional (see, for example, the discussion in \cite{GV, HuLi}). In the homogeneous case, for each functional $\mathcal{F}_t$, it is determined by the quantity $\energy_t=\|\rho\|^2+t\tau^2$. 
In dimension four every Einstein metric is critical with zero energy for the $\mathcal{F}_{-1/4}$-functional.
In contrast, the metric of $\mathbb{S}^2\times\mathbb{H}^2$ is critical for all quadratic curvature functionals, but its energy is $\energy_t=\|\rho\|^2+t\tau^2=4\neq 0$ since its scalar curvature vanishes.

Quadratic curvature functionals are invariant by homotheties in dimension four, and so are their gradients and the critical equations \eqref{eq:S}-\eqref{eq:E-L}.
Hence, along this work, we mostly work at the homothetical level in order to simplify calculations and the statements in the classification results.

%Another functional defined in terms of quadratic curvature invariants is the pinching functional $g\mapsto\int_M \frac{\tau^2}{\|\rho\|^2}\operatorname{dvol}_g$. It was studied in \cite{LW1, LW2} restricted to variations by left-invariant metrics on solvable Lie groups to show a correspondence between solvsolitons and critical metrics for left-invariant variations. We shall note that this functional is not equivalent to $\mathcal{S}$ or any $\mathcal{F}_t$, since it is not homothetically invariant.

\subsection{Homogeneous spaces}
We work in the homogeneous setting. Hence both $\tau$ and $\|\rho\|$ are constant and equations \eqref{eq:S}-\eqref{eq:E-L} reduce to 
\begin{equation}
\label{gradFt}
\tau\left(\rho-\tfrac{1}{n}\tau\, g\right)=0 \,,
\quad\text{and}\quad
-\Delta \rho 
+\tfrac{2}{n} (\|\rho\|^2 +t\tau^2) g 
-2 R[\rho] 
-2t\tau\rho =0 . 
\end{equation}
Since Einstein metrics trivially satisfy these equations for all $t$, we focus in non-Einstein metrics.  
In the case that the energy of the functional $\mathcal{F}_t$ is zero, if $\tau=0$ then $\|\rho\|^2=0$, and the metric is necessarily flat \cite{AK}. Otherwise, if $\energy_t=0$ but $\tau\neq 0$, the value of $t$ is given by $t=-\|\rho\|^2\tau^{-2}$ and the second equation in \eqref{gradFt} further reduces to
\begin{equation}\label{eq:E-L_eq}
-\Delta\rho -2 R[\rho]+2\tau^{-1}\|\rho\|^2 \rho=0,
\end{equation}
which characterizes homogeneous $\mathcal{F}_t$-critical metrics with zero energy.
Furthermore, any Riemannian manifold satisfies $\|\rho\|^2\geq\frac{1}{n}\tau^2$, with equality if and only if the metric is Einstein. Hence, if a non-flat homogeneous metric is $\mathcal{F}_t$-critical with zero energy, then $t=-\|\rho\|^2\tau^{-2}\leq-\frac{1}{n}$.

It directly follows from equations \eqref{eq:S} and \eqref{eq:E-L} that if a metric is critical for two quadratic curvature functionals then it is critical for all. From \eqref{gradFt} we have that a non-Einstein metric is $\mathcal{S}$-critical if and only if $\tau=0$, but an explicit calculation shows that the scalar curvature of  non-symmetric homogeneous Bach-flat metrics, which were given in \cite{CL-GM-GR-GR-VL}, never vanishes. Moreover, it was already shown by Jensen in \cite{Jensen} that homogeneous Einstein metrics are symmetric in dimension four. Hence, we conclude that four-dimensional homogeneous metrics which are critical for all quadratic curvature functionals are symmetric.% Also, by a result of Takagi \cite{Takagi}, homogeneous locally conformally flat metrics are symmetric too.} 

\subsubsection{Symmetric spaces}\label{ss:symmetric}
 In dimension four, simply connected homogeneous spaces are either symmetric or locally isometric to a Lie group with a left-invariant metric~\cite{B81}. Considering the possible eigenvalues of the Ricci operator, one has that any four-dimensional symmetric space is $\mathcal{F}_t$-critical for  some  $t\in\mathbb{R}$ as follows.
\begin{enumerate}
	\item[(i)] Einstein metrics are critical for all quadratic curvature functionals. They correspond to real and complex space forms and products $N^2_1(\kappa)\times N^2_2(\kappa)$ of two surfaces of equal constant sectional curvature.
	\item[(ii)] Locally conformally flat products $\mathbb{R}\times N^{3}(\kappa)$, with $\kappa\neq 0$, are $\mathcal{F}_{-1/3}$-critical. 
	\item[(iii)] Products $\mathbb{R}^2\times N^2(\kappa)$, with $\kappa\neq 0$, are $\mathcal{F}_{-1/2}$-critical.
	\item[(iv)] Locally conformally flat products $N^2_1(\kappa)\times N^2_2(-\kappa)$, with $\kappa\neq 0$, are critical for all quadratic curvature functionals.
	\item[(v)] Products $N_1^2(\kappa_1)\times N_2^2(\kappa_2)$, with $\kappa_1^2\neq \kappa_2^2$ and $\kappa_1\kappa_2\neq  0$, are $\mathcal{F}_{-1/2}$-critical.	
\end{enumerate}
Observe that all metrics in cases (i), (ii) and (iii) are critical for some functional $\mathcal{F}_t$ with zero energy, but metrics in cases (iv) and (v) are not. Also, notice that, in the homogeneous four-dimensional context, metrics which are critical for all quadratic curvature functionals are either Einstein or a product $N^2_1(\kappa)\times N^2_2(-\kappa)$ (i.e., homothetic to $\mathbb{S}^2\times\mathbb{H}^2$).

\subsubsection{Four-dimensional Lie groups}\label{ss:4D-groups}

Any real Lie algebra is a semi-direct product of the radical (i.e., the maximal solvable ideal) and a semi-simple subalgebra (the Levi factor) \cite{Knapp}. Hence the classification of four-dimensional Lie algebras reduces to classifying low-dimensional semi-simple Lie algebras, solvable Lie algebras and semi-direct products of semi-simple Lie algebras and solvable ones. Semi-simple Lie algebras can be decomposed into a direct sum of simple subalgebras which are orthogonal with respect to the Killing-Cartan form. Solvable Lie algebras are also classified in low dimensions (up to six). We refer, for example, to \cite{Patera} for a description of all four-dimensional Lie algebras.
Simply connected four-dimensional Lie groups are isomorphic to the products $\widetilde{SL}(2, \mathbb{R})\times \mathbb{R}$ or $SU(2) \times \mathbb{R}$ or, otherwise, they are solvable Lie groups which correspond to semi-direct extensions of the Euclidean and Poincar\'e groups 
$\mathbb{R}\ltimes \widetilde{E}(2)$ and $\mathbb{R}\ltimes E(1,1)$, the Heisenberg group $\mathbb{R}\ltimes \mathcal{H}^3$, or the Abelian group $\mathbb{R}\ltimes \mathbb{R}^3$ \cite{B81}. 

Left-invariant metrics on three-dimensional Lie groups were described by Milnor as follows \cite{Milnor}. Let $(\mathfrak{g},\langle\cdot,\cdot\rangle)$ be a three-dimensional Lie algebra equipped with a positive definite inner product, and let $L$ be the structure operator $[x,y]=L(x\times y)$, where `$\times$' denotes the vector cross product on $(\mathfrak{g},\langle\cdot,\cdot\rangle)$. Then the associated Lie group $G$ is unimodular if and only if $L$ is self-adjoint, in which case there is an orthonormal basis $\{e_1,e_2,e_3\}$ of $L$-eigenvectors so that 
$$
[e_2,e_3]=\lambda_1 e_1,\qquad
[e_3,e_1]=\lambda_2 e_2,\qquad
[e_1,e_2]=\lambda_3 e_3,
$$ 
where $\lambda_1,\lambda_2,\lambda_3$ are the corresponding eigenvalues of $L$.
The Lie algebra is $\mathfrak{su}(2)$ (resp.,  $\mathfrak{sl}(2,\mathbb{R})$) if $L$ is non-singular and eigenvalues $\lambda_k$  do not change sign (resp., change sign). If $\operatorname{ker}L$ is one-dimensional, then the Lie algebra is $\mathfrak{e}(2)$ (resp., $\mathfrak{e}(1,1)$) if the non-zero eigenvalues of $L$ have the same sign (resp., opposite sign). The Heisenberg Lie algebra $\mathfrak{h}^3$ corresponds to the case when $\operatorname{ker}L$ is two-dimensional, and the Abelian Lie algebra $\mathfrak{r}^3$ corresponds to $L=0$.

Non-unimodular three-dimensional Lie algebras are semi-direct extensions of the Abelian Lie algebra $\mathfrak{r}\ltimes\mathfrak{r}^2$ and there exists an orthonormal basis $\{e_1,e_2,e_3\}$ so that
$$
[e_1,e_2]=\alpha e_2+\beta e_3,\qquad
[e_1,e_3]=\gamma e_2+\delta e_3,\qquad
[e_2,e_3]=0,
$$
where $\alpha,\beta,\gamma,\delta\in\mathbb{R}$ and $\alpha\gamma+\beta\delta=0$. Moreover, one may further normalize the structure constants so that $\alpha+\delta=2$ within the same homothety class.
If the derivation determining the semi-direct extension is singular, then the metric Lie group is homothetic to the unimodular Lie group $\widetilde{SL}(2, \mathbb{R})$ equipped with a suitable left-invariant metric or the Heisenberg group (see \cite{TV}). If the derivation is self-adjoint, then the orthonormal basis may be further specialized so that $\beta=\gamma=0$. 

Homogeneous $\mathcal{F}_t$-critical metrics in dimension three were determined in \cite{BV-GR-CO}, obtaining the classification summarized
in Figure~\ref{figura-1} (see Section~\ref{sect:summary} for an explanation of the legend).
The energy of the corresponding functional, $\energy_t=\|\rho\|^2+t\tau^2$, is zero in Einstein metrics for $\mathcal{F}_{-1/3}$, in products $\mathbb{R}\times N(\kappa)$, in the Heisenberg group, in the Poincaré group, and in non-unimodular Lie groups with $\operatorname{ad}_{e_1}$ self-adjoint for a functional $\mathcal{F}_t$ with $t$ a value satisfying $-1<t<-1/3$.
In the other cases, corresponding to $\mathcal{F}_t$-critical metrics on $SU(2)$ (resp., $\widetilde{SL}(2,\mathbb{R})$), the energy is positive (resp., negative).
  
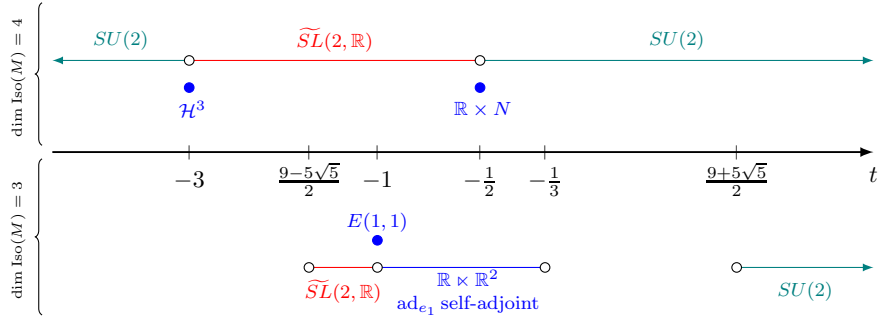
\begin{figure}[h]
	\centering
	\begin{tikzpicture}[scale=0.9, every node/.style={scale=0.9}]
		\draw[decorate,	decoration = {brace}] (-6.15,0.1)--(-6.15,2.2);
		\draw[decorate,	decoration = {brace}] (-6.15,-2.4)--(-6.15,-0.1);
		\node[rotate=90] at (-6.5,1.1) {\tiny $\operatorname{dim\,Iso}(M)=4$};
		\node[rotate=90] at (-6.5,-1.3) {\tiny $\operatorname{dim\,Iso}(M)=3$};
		\draw[line width=0.25mm,-latex] (-6,0)--(6,0);
		%\draw (-6,-0.1) node[below] {$t$};
		\draw (6,-0.1) node[below] {$t$};

		\draw (-4,0) node {\tiny$\vert$};
		\draw (-4,-0.17) node[below] {$-3$};
		
		\draw (-2.25,0) node {\tiny$\vert$};
		\draw (-2.25,-0.03) node[below] {$\frac{9-5\sqrt{5}}{2}$};
		
		\draw (-1.25,0) node {\tiny$\vert$};
		\draw (-1.25,-0.17) node[below] { $-1$};
		
		\draw (0.25,0) node {\tiny$\vert$};
		\draw (0.25,-0.1) node[below] {$-\frac{1}{2}$};
		
		\draw (1.2,0) node {\tiny$\vert$};
		\draw (1.2,-0.1) node[below] {$-\frac{1}{3}$};
		
		\draw (4,0) node {\tiny$\vert$};
		\draw (4,-0.03) node[below] {$\frac{9+5\sqrt{5}}{2}$};

		%Parte superior 
		\def\YSup{0.65}
		\draw [color=teal,latex-] (-6,0.7+\YSup) -- (-4,0.7+\YSup) node [black,midway,yshift=+0.3cm] 
		{\footnotesize\color{teal} $SU(2)$};
		
		\draw [color=red,-] (-4,0.7+\YSup) -- (0.25,0.7+\YSup) node [black,midway,yshift=+0.3cm] 
		{\footnotesize\color{red} $\widetilde{SL}(2,\mathbb{R})$};
		
		\draw [color=teal,-latex] (0.25,0.7+\YSup) -- (6,0.7+\YSup) node [black,midway,yshift=+0.3cm] 
		{\footnotesize\color{teal} $SU(2)$};
		
		\filldraw [color=black,fill=white] (-4,0.7+\YSup) circle (2pt);
		\filldraw [color=black,fill=white] (0.25,0.7+\YSup) circle (2pt);

		\filldraw [color=blue,fill=blue] (-4,0.3+\YSup) circle (2pt);
		\filldraw [color=blue,fill=blue] (0.25,0.3+\YSup) circle (2pt);
		
		\draw (-3.95,-0.25+\YSup) node[above] {\footnotesize \blue $\mathcal{H}^3$};
		\draw (0.29,-0.25+\YSup) node[above] {\footnotesize\blue $\mathbb{R}\times N$};

		%Parte inferior
		\blue
		\def\Ysubir{0.1}
		
		\draw (-1.25,-1.4+\Ysubir) node[above] {\footnotesize \blue $E(1,1)$};
		\filldraw [color=blue,fill=blue] (-1.25,-1.4+\Ysubir) circle (2pt);
		\draw [-] (-1.25,-1.8+\Ysubir) -- (1.2,-1.8+\Ysubir) node [black,midway,yshift=-0.37cm] {\footnotesize \blue \hspace{0.2cm}$\begin{array}{c}\mathbb{R}\ltimes\mathbb{R}^2\\\operatorname{ad}_{e_1} \text{self-adjoint}\end{array}$};
		%\draw (0.05,-2.25+\Ysubir) node[above] {\blue\footnotesize (non-unimodular)};
		\filldraw [color=black,fill=white] (1.2,-1.8+\Ysubir)  circle (2pt);
	%	\filldraw [color=black,fill=blue] (-1.25,-1.75+\Ysubir) circle (2pt);
		
		\draw [color=teal,latex-] (6,-1.8+\Ysubir) -- (4,-1.8+\Ysubir) node [black,midway,yshift=-0.35cm] 
		{\footnotesize\color{teal} $SU(2)$};
		\filldraw [color=black,fill=white] (4,-1.8+\Ysubir) circle (2pt);

		\def\YSL{-0.5}
		\draw [color=red,-] (-1.25,-1.8+\Ysubir) -- (-2.25,-1.8+\Ysubir) 
		node [black,midway,yshift=-0.35cm] {\footnotesize \color{red}  $\widetilde{SL}(2,\mathbb{R})$};
		\filldraw [color=black,fill=white] (-2.25,-1.8+\Ysubir)  circle (2pt);
		\filldraw [color=black,fill=white] (-1.25,-1.8+\Ysubir)  circle (2pt);

		\black
	\end{tikzpicture}
	\caption{Non-Einstein homogeneous three-dimensional $\mathcal{F}_t$-critical metrics.}
	\label{figura-1}
\end{figure}

%An explicit description of left-invariant metrics on four-dimensional Lie groups  is given through sections \ref{se:SL(2,R)}--\ref{se:R}. We omit the details here and refer the reader to the subsequent sections.

\subsection{Ricci solitons}\label{ss:1-3-2}
A Riemannian manifold $(M,g)$ is a \emph{Ricci soliton} if there exists a vector field $X$ on $M$ so that 
$
\rho=\lambda g+\mathcal{L}_Xg
$
for some $\lambda\in\mathbb{R}$, where $\mathcal{L}$ denotes the Lie derivative. Ricci solitons not only generalize Einstein metrics, but also correspond to self-similar solutions of the Ricci flow. A Ricci soliton is expanding, steady or shrinking depending on whether $\lambda<0$, $\lambda=0$ or $\lambda>0$, respectively.
Furthermore, if the vector field $X$ is a gradient, then the soliton is said to be a \emph{gradient Ricci soliton}. 

In the homogeneous context, steady Ricci solitons are flat and shrinking Ricci solitons are gradient, as follows from the work of Naber and Perelman \cite{Naber, Perelman}.
Petersen and Wylie \cite{rigid} showed that homogeneous gradient Ricci solitons are \emph{rigid}, i.e., they are isometric to a product $\mathbb{R}^k\times N$, where $N$ is an Einstein manifold and the potential function is the projection on the Euclidean factor $f=\frac{\lambda}{2}\|\pi_{\mathbb{R}^k}\!\|^2$. Hence, non-Einstein four-dimensional homogeneous gradient Ricci solitons are symmetric products $\mathbb{R}^\ell\times N^{4-\ell}(\kappa)$, where $N^{4-\ell}(\kappa)$ is a manifold of constant sectional curvature $\kappa$, as in cases (ii) and (iii) in Section~\ref{ss:symmetric}. Therefore, they are $\mathcal{F}_t$-critical metrics with zero energy for $t=-1/3$ or $t=-1/2$.
In contrast with this situation, Lauret \cite{Lauret1} showed the existence of non-symmetric homogeneous expanding Ricci solitons induced by algebraic Ricci solitons.

Let $G$ be a Lie group equipped with a left-invariant metric $\langle\cdot,\cdot\rangle$.
Then $(G,\langle\cdot,\cdot\rangle)$ is said to be an \emph{algebraic Ricci soliton} if the Ricci operator satisfies 
$
\operatorname{Ric}=\lambda \operatorname{Id}+\mathfrak{D}
$,
where $\mathfrak{D}$ is a derivation of $\mathfrak{g}=\operatorname{Lie}(G)$. Any algebraic Ricci soliton gives rise to a Ricci soliton \cite{Lauret1}. The converse is true for nilpotent Lie groups: if a left-invariant metric on a simply connected nilpotent Lie group is a Ricci soliton, then it is an algebraic Ricci soliton \cite{Lauret1}. 
Moreover, any homogeneous expanding Ricci soliton is isometric to an algebraic Ricci soliton which is realized on a solvable Lie group (solvsoliton) up to dimension five \cite{Arroyo-Lafuente, Jablonski2, Jablonski1}.

Algebraic Ricci solitons on non-solvable four-dimensional Lie groups are isomorphically homothetic to the left-invariant metric on $SU(2)\times\mathbb{R}$ determined by
$$
[e_1,e_2]= e_3,\quad [e_1,e_3]= - e_2,\quad [e_2,e_3]= e_1,
$$
where $\{ e_1,\dots,e_4\}$ is an orthonormal basis of the Lie algebra $\mathfrak{su}(2)\times\mathfrak{r}$.
Moreover it is locally conformally flat (hence symmetric by \cite{Takagi}) and homothetic to $\mathbb{R}\times \mathbb{S}^3$.

Four-dimensional solvsolitons were described by Lauret in \cite{Lauret2}. We work modulo homotheties (which do not necessarily preserve the group structure). Now, a long but direct calculation shows that four-dimensional non-symmetric solvsolitons are homothetic to one of the following four families (where $\{ e_1,\dots,e_4\}$ is an orthonormal basis of the corresponding Lie algebra). 
\begin{itemize}
	\item[(i)] The left-invariant $\mathcal{F}_{-3}$-critical metric on $\mathbb{R}\ltimes\mathbb{R}^3$ given by $[e_1,e_4]=e_1+ e_3$,\, $[e_3,e_4]=-  (e_1+e_3)$. 
		
	\smallskip
	\item[(ii)] The left-invariant $\mathcal{F}_{-3/2}$-critical  metric on $\mathbb{R}\ltimes\mathbb{R}^3$ given by 
	$$
	[e_1,e_4]=e_1+ \tfrac{1}{\sqrt{2}} e_3,\quad  
	[e_2,e_4]=-e_2+\tfrac{1}{\sqrt{2}} e_3,\quad  
	[e_3,e_4]=-\tfrac{1}{\sqrt{2}} (e_1+e_2).
	$$

	\smallskip	
	\item[(iii)]The left-invariant metrics on $\mathbb{R}\ltimes\mathbb{R}^3$ given by a self-adjoint derivation
	$$
	[e_1,e_4]=e_1, \quad [e_2,e_4]=f e_2, \quad [e_3,e_4]=p e_3,
	$$ 
	which are $\mathcal{F}_t$-critical for 
	$t=-\|\rho\|^2\tau^{-2}=-\frac{f^2+p^2+1}{2(f^2+p^2+fp+f+p+1)}\in[-1,-\frac{1}{4})$, 	
	where the parameters 			 $\{(f,p)\in\mathbb{R}^2;\, -1\leq f\leq p\leq 1\}\setminus \{(-1,p);\, -1\leq p<0\}$
	and $(f,p)\notin\{ (0,0), (0,1),(1,1) \}$.
	
	\smallskip
	\item[(iv)] The left-invariant metrics on $\mathbb{R}\ltimes \mathcal{H}^3$ given by 
	$$
	[e_1,e_2]=e_3, \quad [e_1,e_4]=a e_1, \quad [e_2,e_4]=d e_2, \quad [e_3,e_4]=(a+d)e_3,
	$$ 
	which are $\mathcal{F}_t$-critical for $t=-\|\rho\|^2\tau^{-2}=-\frac{3}{2 (4ad+5)}\in [-\frac{3}{4},-\frac{1}{4})$,
	with $a\in[-\frac{\sqrt{3}}{2},\frac{1}{2})$. For a fixed $a$, the parameter $d$ is given by the only positive solution of $4(a^2+d^2+ad)-3=0$.
\end{itemize}
In view of the value of $t$ for which metrics are critical, it is clear that metrics in cases (i) and (ii) are not homothetic to any other in this list. Furthermore, a direct computation of the set of homothetic invariants $\{t$, $\| R\|^2\, \tau^{-2}$, $\|\nabla\rho\|^2\,\tau^{-3}\}$ shows that metrics in case (iii) are never homothetic to those in case (iv), and also that different values of the parameters in (iii) or (iv) give rise to metrics which are not homothetic.

\begin{remark}\rm
	Although the left-invariant metrics on $\mathbb{R}\ltimes\mathbb{R}^3$ given by 
	$$
	[e_1,e_4]=e_1+ce_3,\quad [e_2,e_4]=fe_2,\quad [e_3,e_4]=-ce_1+e_3,
	$$ 
	are algebraic Ricci solitons, a straightforward calculation shows that the sectional curvature is independent of the structure constant $c$. Hence it follows from  \cite{Kulkarni} that these metrics   are homothetically equivalent (although not isomorphically equivalent) to a metric with $c=0$, which is a particular case of  (iii).
\end{remark}
\begin{remark}\label{re:xx}\rm
	The product metric on $\mathbb{R}\times \mathcal{H}^3$ given by $[e_1,e_2]=e_3$ is an algebraic Ricci soliton which is  isomorphically homothetic  to the metric (i) above, as follows from the change of basis given by 
	$$
	\bar e_1 = -\sqrt{2}(e_2-e_3),\quad
	\bar e_2 = 2 e_4,\quad
	\bar e_3 = \sqrt{2}(e_2+e_3),\quad
	\bar e_4 = 2 e_1.
	$$
%	$$
%	\tilde e_1 = 2 e_4,\quad
%	\tilde e_2 = -\sqrt{2}(e_2-e_3),\quad
%	\tilde e_3 = \sqrt{2}(e_2+e_3),\quad
%	\tilde e_4 = 2 e_1.
%	$$
	
	Analogously, the left-invariant metric on $\mathbb{R}\ltimes \mathcal{H}^3$ determined by $[e_1,e_2]=e_3$,  $[e_2,e_4]=-e_1$ is an algebraic Ricci soliton  which is isomorphically homothetic to the metric (ii), as deduced from the change of basis given by
	$$
	\bar e_1 = e_1+\tfrac{1}{\sqrt{2}} (e_3+e_4),\quad
	\bar e_2 = e_1-\tfrac{1}{\sqrt{2}}(e_3+e_4),\quad
	\bar e_3 = e_3-e_4,\quad 
	\bar e_4 = \sqrt{2} e_2.
	$$ 
\end{remark}
\begin{remark}\label{re:xxx}\rm
	The left-invariant metric on the product $E(1,1)\times \mathbb{R}$ given by
	$$
	[e_1,e_3]= e_2,\quad [e_2,e_3]= e_1,
	$$
	is a $\mathcal{F}_{-1}$-critical algebraic Ricci soliton.
	Considering the orthogonal basis
	$$
	\bar e_1 =\tfrac{1}{\sqrt{2}}(e_1+e_2),\quad
	\bar e_2 = \tfrac{1}{\sqrt{2}}(e_2-e_1),\quad
	\bar e_3 =e_4,
	\quad
	\bar e_4 = e_3,
	$$
	one gets that  this case is isomorphically homothetic to (iii) for the special values $f=-1$ and $p=0$.
\end{remark}

Four-dimensional algebraic homogeneous Ricci solitons described above are $\mathcal{F}_t$-critical for some quadratic curvature functional. Moreover, a straightforward case by case calculation shows that the energy $\energy_t=\|\rho\|^2+t\tau^2$ vanishes for all of them. 
Therefore, it follows from the work in \cite{Arroyo-Lafuente} that
\begin{quote}
\emph{Four-dimensional homogeneous Ricci solitons are critical for some quadratic curvature functional with zero energy}.
\end{quote} 
While the converse trivially holds in the four-dimensional symmetric setting, we show in Corollary~\ref{cor:hoy} that the class of homogeneous critical metrics with zero energy is strictly larger than that of Ricci solitons.

\subsection{Summary of results}\label{sect:summary}
Connected and simply connected non-symmetric homogeneous manifolds which are critical for some quadratic curvature functional are described. The resulting left-invariant metrics are summarized in Figures~\ref{figura-3} and \ref{figura-4}.

We work modulo homotheties to represent all non-symmetric $\mathcal{F}_t$-critical metrics with vanishing energy in Figure~\ref{figura-3}. 
Every such $\mathcal{F}_t$-critical metric is homothetic to a left-invariant metric on $\mathbb{R}\ltimes  \mathcal{H}^3$ or $\mathbb{R}\ltimes \mathbb{R}^3$. Thus,
the numbering in each semi-direct product refers to the corresponding item in the classification of metrics on $\mathbb{R}\ltimes  \mathcal{H}^3$ (Theorem~\ref{th:heisenberg}) or    $\mathbb{R}\ltimes\mathbb{R}^3$ (Theorem~\ref{th:criticas-R3}). The critical metric corresponding to Theorem~\ref{th:E(1,1)}--(1) is isomorphically homothetic to a metric in Theorem~\ref{th:criticas-R3}--(3) (see Remark~\ref{re:xxx}) and thus omitted in Figure~\ref{figura-3}. Analogously, metrics in Families~(1) and (3) of  Theorem~\ref{th:heisenberg} are omitted since they are isomorphically homothetic to metrics corresponding to Families~(1) and (2) in Theorem~\ref{th:criticas-R3}, as shown in Remark~\ref{re:xx}.

In order to assist the interpretation of figures, we explain the legend we use along the paper as follows. Each row indicates the range of $t$ for the corresponding family of metrics. Intervals indicating the range of $t$ are represented with a segment for each homothetic class and are marked with a dotting above if the number of homothetic classes is infinite. The arrow on the left (resp. on the right) indicates that the interval extends to $-\infty$ (resp. to $+\infty$). An empty dot means that the point is not included in the interval, whereas a filled dot indicates that the point belongs to the range of $t$.
Finally, colors are used to inform about the energy and Ricci solitons. Thus, blue tones correspond to zero energy, red color to negative energy and green color to positive energy. Also,
values colored in dark blue correspond to Ricci solitons, while those colored in other color (for example, cyan in Figure~\ref{figura-3}) are not homothetic to any Ricci soliton.

% % % % % % % % % % % % % % % % % % % % % % % % % % % %
% Gráfico energía 0
% % % % % % % % % % % % % % % % % % % % % % % % % % % %
\def\RTres{\mathbb{R}\!\ltimes\!\mathbb{R}^3}
\def\HTres{\mathbb{R}\!\ltimes\! \mathcal{H}^3}
\def\EUnoUno{\mathbb{R}\!\ltimes\! E(1,1)}
\def\EDos{\mathbb{R}\!\ltimes\! \widetilde{E}(2)}

\def\Xmenos{-6}
\def\Xmenost{\Xmenos+1.9}
\def\Xmas{6}
\def\Xmast{6-0.4}
\def\XUnCuarto{\Xmast}

\def\XPequenho{0.52}
\def\XMedio{0.6}
\def\XGrande{0.7}
\def\XMasGrande{0.8}
\def\XTresOnceavos{\XUnCuarto-\XPequenho}
\def\XCincoMenosDosRaizSiete{\XTresOnceavos-\XPequenho}
\def\XTresDecimos{\XCincoMenosDosRaizSiete-\XPequenho}
\def\XUnTercio{\XTresDecimos-\XPequenho}
\def\XVeintiunCincuentaydosavos{\XUnTercio-\XPequenho}
\def\XSieteDieciseisavos{\XVeintiunCincuentaydosavos-\XPequenho}
\def\XCincoOnceavos{\XSieteDieciseisavos-\XPequenho}
\def\XSieteQuinceavos{\XCincoOnceavos-\XPequenho}
\def\XUnMedio{\XSieteQuinceavos-\XPequenho}

\def\XSieteDecimos{\XUnMedio-\XMedio}

\def\XTresCuartos{\XSieteDecimos-\XPequenho}

\def\XUno{\XTresCuartos-\XMedio}

\def\XTresMedios{\XUno-\XGrande}

\def\XZeta{\Xmenost+0.5}
\def\XTres{\XZeta+\XMasGrande}

\def\ColorECeroARS{blue}
\def\ColorECeroNoARS{cyan}
\def\ColorENegativa{red}
\def\ColorEPositiva{teal}
\def\EscalaGrupo{0.9}
\def\XGrupo{\Xmenos-0.1}
\def\XNumeroGrupo{\XGrupo+1.27}
\def\YPrimerGrupo{-0.45}
\def\YSepEntreClases{0.5}
\def\YSepInfMetricas{0.18}  %Debe ser nº par
\def\YSepDosMetricas{0.075}

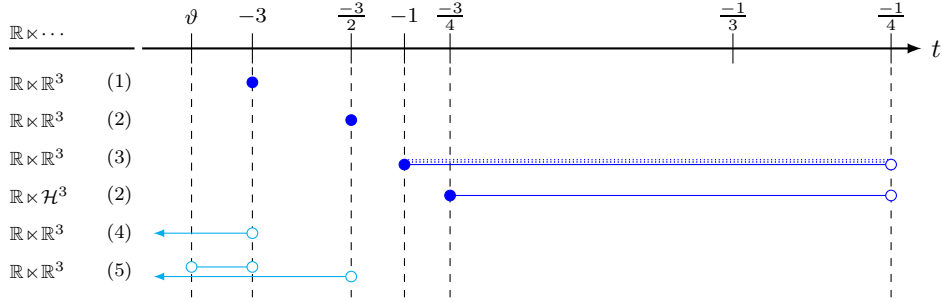
\begin{figure}[h]
	\centering

\def\NumeroCasos{6}
\def\YLineaPunteada{-\NumeroCasos*\YSepEntreClases-0.3}

\begin{tikzpicture}
	% Escribimos Rx...
	\draw[thick]   (\Xmenos,0) -- (\Xmenost-0.25,0); 
	\draw (\XGrupo,0.2) node[anchor=west,scale=\EscalaGrupo] {\footnotesize 
			$\mathbb{R}\!\ltimes\!\cdots$ };
	% Linea eje t
	\draw[thick,-latex]   (\Xmenost-0.15,0) -- (\Xmas,0) node[right] {$t$};  
		
	% Marca en el eje t de cada valor de t
	\foreach \i/\n in 
	{
	%%%%	{\XTreceCuartos}/$-\frac{13}{4}$,
		{\XZeta}/$\vartheta$, 
		{\XTres}/$-3$, 
		{\XTresMedios}/$-\frac{3}{2}$, 
		{\XUno}/$-1$, 
		{\XTresCuartos}/$-\frac{3}{4}$,
	%%%%	{\XSieteDecimos}/$-\frac{7}{10}$, 
	%%%%	{\XCincoOnceavos}/$-\frac{5}{11}$, 
		{\XUnTercio}/$-\frac{1}{3}$,
		{\XUnCuarto}/$-\frac{1}{4}$
	}
	{\draw (\i,-.2)--(\i,.2);}
		
	% Escribimos los t's que son enteros en el eje
	\foreach \i/\n in 
	{
		{\XZeta}/$\vartheta$, 
		{\XTres}/$-3$, 
		{\XUno}/$-1$
	}
	{\draw  (\i,.2) node[above] {\footnotesize \n};}
		
	% Escribimos los t's que son fracciones en el eje
	\foreach \i/\n in 
	{
	%%%%	{\XTreceCuartos}/$\frac{-13}{4}$, 
		{\XTresMedios}/$\frac{-3}{2}$, 
		{\XTresCuartos}/$\frac{-3}{4}$,
	%%%%	{\XSieteDecimos}/$\frac{-7}{10}$, 
	%%%%	{\XCincoOnceavos}/$\frac{-5}{11}$, 
		{\XUnTercio}/$\frac{-1}{3}$,
		{\XUnCuarto}/$\frac{-1}{4}$
	}
	{\draw  (\i,.12) node[above] {\footnotesize \n};}
		
	% Escribimos las lineas punteadas verticales para cada t
	\foreach \i/\n in 
	{
	%%%%	{\XTreceCuartos}/$-\frac{13}{4}$, 
		{\XZeta}/$\vartheta$, 
		{\XTres}/$-3$, 
		{\XTresMedios}/$-\frac{3}{2}$, 
		{\XUno}/$-1$, 
		{\XTresCuartos}/$-\frac{3}{4}$,
	%%%%	{\XSieteDecimos}/$-\frac{7}{10}$, 
	%%%%	{\XCincoOnceavos}/$-\frac{5}{11}$, 
		{\XUnCuarto}/$-\frac{1}{4}$
	}{
		\draw[dashed] (\i,-.25)--(\i,\YLineaPunteada);
	}

	% (1) % % % % % % % % % % % % % % % % % % % %
	\def\nGrupo{1}
	\def\YGrupo{\YPrimerGrupo-\nGrupo*\YSepEntreClases+\YSepEntreClases}
		
	\draw (\XNumeroGrupo,\YGrupo) node[anchor=west,scale=\EscalaGrupo] {\footnotesize (1) };
	\draw (\XGrupo,\YGrupo) node[anchor=west,scale=\EscalaGrupo] {\footnotesize 
			$\color{black} \RTres$};
		
	\filldraw [color=\ColorECeroARS,fill=\ColorECeroARS] (\XTres,\YGrupo) circle (2pt);

	% (2) % % % % % % % % % % % % % % % % % % % %
	\def\nGrupo{2}
	\def\YGrupo{\YPrimerGrupo-\nGrupo*\YSepEntreClases+\YSepEntreClases}
				
	\draw (\XNumeroGrupo,\YGrupo) node[anchor=west,scale=\EscalaGrupo] {\footnotesize (2) };
	\draw (\XGrupo,\YGrupo) node[anchor=west,scale=\EscalaGrupo] {\footnotesize $ 
		\color{black} \RTres$};

	\filldraw [color=\ColorECeroARS,fill=\ColorECeroARS] (\XTresMedios,\YGrupo) circle (2pt);

	% (3) % % % % % % % % % % % % % % % % % % % %
	\def\nGrupo{3}
	\def\YGrupo{\YPrimerGrupo-\nGrupo*\YSepEntreClases+\YSepEntreClases}
						
	\draw (\XNumeroGrupo,\YGrupo) node[anchor=west,scale=\EscalaGrupo] {\footnotesize (3) };
	\draw (\XGrupo,\YGrupo) node[anchor=west,scale=\EscalaGrupo] {\footnotesize $ 
		\color{black} \RTres$};

	\draw[densely dotted,color=\ColorECeroARS] 	
		(\XUno,\YGrupo-\YSepInfMetricas/4)--(\XUnCuarto,\YGrupo-\YSepInfMetricas/4);
%	\draw[densely dotted,color=\ColorECeroARS] 
%		(\XUno,\YGrupo)--(\XUnCuarto,\YGrupo);		
%	\draw[densely dotted,color=\ColorECeroARS] 
%		(\XUno,\YGrupo+\YSepInfMetricas/4)--(\XUnCuarto,\YGrupo+\YSepInfMetricas/4);
%	\draw[densely dotted,color=\ColorECeroARS] 
%		(\XUno,\YGrupo+\YSepInfMetricas/2)--(\XUnCuarto,\YGrupo+\YSepInfMetricas/2);
	\draw[densely dotted,color=\ColorECeroARS] 	
(\XUno,\YGrupo-\YSepInfMetricas/8)--(\XUnCuarto,\YGrupo-\YSepInfMetricas/8);
	\draw[densely dotted,color=\ColorECeroARS] 	
(\XUno,\YGrupo-\YSepInfMetricas/3)--(\XUnCuarto,\YGrupo-\YSepInfMetricas/3);

	\draw[color=\ColorECeroARS] 	
		(\XUno,\YGrupo-\YSepInfMetricas/2)--(\XUnCuarto,\YGrupo-\YSepInfMetricas/2);
	\filldraw [color=\ColorECeroARS,fill=\ColorECeroARS] 
		(\XUno,\YGrupo-\YSepInfMetricas/2) circle (2pt);
	\filldraw [color=\ColorECeroARS,fill=white]
		(\XUnCuarto,\YGrupo-\YSepInfMetricas/2) circle (2pt);

	% (4) % % % % % % % % % % % % % % % % % % % %
	\def\nGrupo{4}
	\def\YGrupo{\YPrimerGrupo-\nGrupo*\YSepEntreClases+\YSepEntreClases}
							
	\draw (\XNumeroGrupo,\YGrupo) node[anchor=west,scale=\EscalaGrupo] {\footnotesize (2) };
	\draw (\XGrupo,\YGrupo) node[anchor=west,scale=\EscalaGrupo] {\footnotesize $ 
		\color{black} \HTres$};
			
	\draw[color=\ColorECeroARS](\XTresCuartos,\YGrupo)--(\XUnCuarto,\YGrupo);
	\filldraw [color=\ColorECeroARS,fill=\ColorECeroARS] 
		(\XTresCuartos,\YGrupo) circle (2pt);
	\filldraw [color=\ColorECeroARS,fill=white] 
		(\XUnCuarto,\YGrupo) circle (2pt);

	% (5) % % % % % % % % % % % % % % % % % % % %
	\def\nGrupo{5}
	\def\YGrupo{\YPrimerGrupo-\nGrupo*\YSepEntreClases+\YSepEntreClases}
	
	\draw (\XNumeroGrupo,\YGrupo) node[anchor=west,scale=\EscalaGrupo] {\footnotesize (4) };
	\draw (\XGrupo,\YGrupo) node[anchor=west,scale=\EscalaGrupo] {\footnotesize $ 
		\color{black} \RTres$};
	
	\draw[color=\ColorECeroNoARS,latex-=] (\Xmenost,\YGrupo)--(\XTres,\YGrupo);
	%%%%	\filldraw [color=\ColorECeroNoARS,fill=white] (\XTreceCuartos,\YGrupo) circle (2pt);
	\filldraw [color=\ColorECeroNoARS,fill=white] (\XTres,\YGrupo) circle (2pt);

	% (6) % % % % % % % % % % % % % % % % % % % %	
	\def\nGrupo{6}
	\def\YGrupo{\YPrimerGrupo-\nGrupo*\YSepEntreClases+\YSepEntreClases}
	
	\draw (\XNumeroGrupo,\YGrupo) node[anchor=west,scale=\EscalaGrupo] {\footnotesize (5) };
	\draw (\XGrupo,\YGrupo) node[anchor=west,scale=\EscalaGrupo] {\footnotesize $ 
		\color{black} \RTres$};
	
	\draw[color=\ColorECeroNoARS,latex-=] 			
		(\Xmenost,\YGrupo-\YSepDosMetricas)--(\XTresMedios,\YGrupo-\YSepDosMetricas);
	\filldraw[color=\ColorECeroNoARS,fill=white] 
		(\XTresMedios,\YGrupo-\YSepDosMetricas) circle (2pt);
	
	\draw[color=\ColorECeroNoARS] 			
		(\XZeta,\YGrupo+0.7*\YSepDosMetricas)--(\XTres,\YGrupo+0.7*\YSepDosMetricas);
	\filldraw[color=\ColorECeroNoARS,fill=white] 
		(\XZeta,\YGrupo+0.7*\YSepDosMetricas) circle (2pt);
	\filldraw[color=\ColorECeroNoARS,fill=white] 
		(\XTres,\YGrupo+0.7*\YSepDosMetricas) circle (2pt);
	 
	\end{tikzpicture}

		\caption{Range of the parameter $t$ for non-symmetric homogeneous $\mathcal{F}_t$-critical metrics with zero energy.}
	\label{figura-3}
\end{figure}
% % % % % % % % % % % % % % % % % % % % % % % % % % % %
% FIN  Gráfico energía 0
% % % % % % % % % % % % % % % % % % % % % % % % % % % %

Metrics corresponding to different families in Figure~\ref{figura-3} are never homothetic. Moreover, metrics in each family are also non-homothetic. Furthermore, for each admissible value of $t$ there is a single $\mathcal{F}_t$-critical metric in Theorem~\ref{th:heisenberg}--(2) and in Families~(1), (2) and (4) of Theorem~\ref{th:criticas-R3}. For each $t<-3/2$, there is a unique $\mathcal{F}_t$-critical metric in Theorem~\ref{th:criticas-R3}--(5), except if $\vartheta<t<-3$, where $\vartheta=-3,753199\dots$ is the only real solution of $192 p^3+  1152 p^2+1865 p+923=0$. For any such value of $t$, there are two non-homothetic $\mathcal{F}_t$-critical metrics. In contrast, for each $t\in[-1,-\frac{1}{4})$ there is an infinite family of non-homothetic $\mathcal{F}_t$-critical metrics corresponding to Family~(3) in Theorem~\ref{th:criticas-R3}.

Homogeneous critical metrics with non-zero energy are represented in Figure~\ref{figura-4}. The numbering in the second column refers to the corresponding item in the classification result for the semi-direct product $\mathbb{R}\ltimes\mathbb{R}^3$ (Theorem~\ref{th:criticas-R3}), $\mathbb{R}\ltimes H^3$ (Theorem~\ref{th:heisenberg}), or $\mathbb{R}\ltimes E(1,1)$ and $\mathbb{R}\ltimes \widetilde{E}(2)$ (Theorem~\ref{th:E(1,1)}), respectively.

% % % % % % % % % % % % % % % % % % % % % % % % % % % %
% Gráfico energía distinto de 0
% % % % % % % % % % % % % % % % % % % % % % % % % % % %
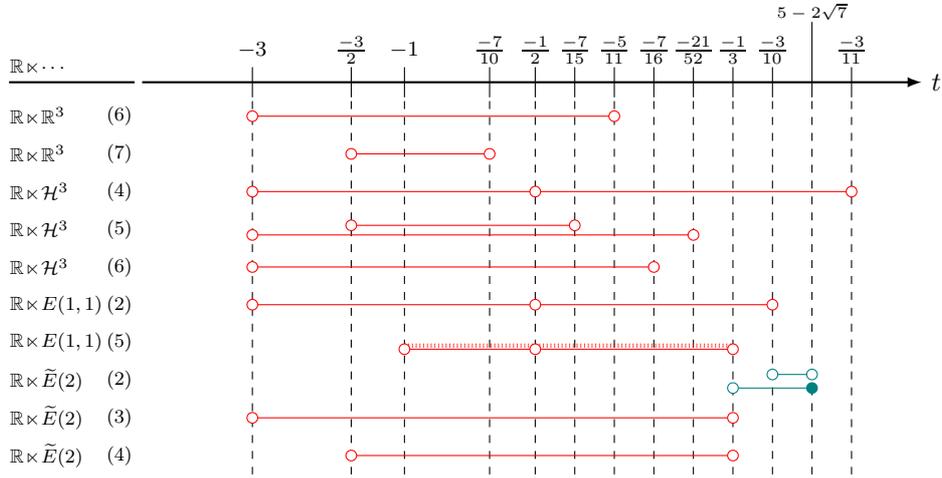
\begin{figure}[h]
	\centering

\def\NumeroCasos{10}
\def\YLineaPunteada{-\NumeroCasos*\YSepEntreClases-0.2}
	\begin{tikzpicture}
		% Escribimos Rx...
		\draw[thick]   (\Xmenos,0) -- (\Xmenost-0.25,0); 
		\draw (\XGrupo,0.2) node[anchor=west,scale=\EscalaGrupo] {\footnotesize 			
			$\mathbb{R}\!\ltimes\!\cdots$ };
		% Linea eje t
		\draw[thick,-latex]   (\Xmenost-0.15,0) -- (\Xmas,0) node[right] {$t$}; 
		
		% Marca en el eje t de cada valor de t
		\foreach \i/\n in 
		{
			% {\XZeta}/$\vartheta$,
			{\XTres}/$-3$, 
			{\XTresMedios}/$-\frac{3}{2}$, 
			{\XUno}/$-1$, 
			% {\XTresCuartos}/$-\frac{3}{4}$,
			{\XSieteDecimos}/$-\frac{7}{10}$, 
			{\XUnMedio}/$-\frac{1}{2}$,
			{\XSieteQuinceavos}/$-\frac{7}{15}$,
			{\XCincoOnceavos}/$-\frac{5}{11}$, 
			{\XSieteDieciseisavos}/$-\frac{7}{16}$,
			{\XVeintiunCincuentaydosavos}/$-\frac{21}{52}$,	
			{\XUnTercio}/$-\frac{1}{3}$,
			{\XTresDecimos}/$-\frac{3}{10}$,
			{\XTresOnceavos}/$-\frac{3}{11}$
			% ,
			% {\XUnCuarto}/$-\frac{1}{4}$
		}
		{\draw (\i,-.2)--(\i,.2);}

		\foreach \i/\n in 
		{
			{\XCincoMenosDosRaizSiete}/$5-2\sqrt{7}$ 
		}
		{\draw (\i,-.2)--(\i,.8);}

		% Escribimos los t's que no son fracciones en el eje
		\foreach \i/\n in 
		{
			{\XTres}/$-3$, 
			{\XUno}/$-1$ 
		}
		{\draw  (\i,.2) node[above] {\footnotesize   \n};}
		
		\foreach \i/\n in 
		{
			{\XCincoMenosDosRaizSiete}/$5-2\sqrt{7}$
		}
		{\draw  (\i,.7) node[above] {\footnotesize \tiny \n};}

		% Escribimos los t's que son fracciones en el eje
		\foreach \i/\n in 
		{
			% {\XZeta}/$\vartheta$,
			{\XTresMedios}/$\frac{-3}{2}$, 
			% {\XTresCuartos}/$\frac{-3}{4}$,
			{\XSieteDecimos}/$\frac{-7}{10}$, 
			{\XUnMedio}/$\frac{-1}{2}$,
			{\XSieteQuinceavos}/$\frac{-7}{15}$,
			{\XCincoOnceavos}/$\frac{-5}{11}$, 
			{\XSieteDieciseisavos}/$\frac{-7}{16}$,
			{\XVeintiunCincuentaydosavos}/$\frac{-21}{52}$,	
			{\XUnTercio}/$\frac{-1}{3}$,
			{\XTresDecimos}/$\frac{-3}{10}$,
			{\XTresOnceavos}/$\frac{-3}{11}$
			% ,
			% {\XUnCuarto}/$\frac{-1}{4}$
		}
		{\draw  (\i,.12) node[above] {\footnotesize  \n};}

		% Escribimos las lineas punteadas verticales para cada t
		\foreach \i/\n in 
		{
			% {\XZeta}/$\vartheta$, 
			{\XTres}/$-3$, 
			{\XTresMedios}/$-\frac{3}{2}$, 
			{\XUno}/$-1$, 
			% {\XTresCuartos}/$-\frac{3}{4}$,
			{\XSieteDecimos}/$-\frac{7}{10}$, 
			{\XUnMedio}/$-\frac{1}{2}$,
			{\XSieteQuinceavos}/$-\frac{7}{15}$, 
			{\XCincoOnceavos}/$-\frac{5}{11}$, 
			{\XSieteDieciseisavos}/$-\frac{7}{16}$,
			{\XVeintiunCincuentaydosavos}/$-\frac{21}{52}$,	
			{\XUnTercio}/$-\frac{1}{3}$,
			{\XTresDecimos}/$-\frac{3}{10}$,
			{\XCincoMenosDosRaizSiete}/$5-2\sqrt{7}$ ,
			{\XTresOnceavos}/$-\frac{3}{11}$
			% ,
			% {\XUnCuarto}/$-\frac{1}{4}$
		}
		{
			\draw[dashed] (\i,-.25)--(\i,\YLineaPunteada);
		}

		% (1) % % % % % % % % % % % % % % % % % % % %
		\def\nGrupo{1}
		\def\YGrupo{\YPrimerGrupo-\nGrupo*\YSepEntreClases+\YSepEntreClases}
		
		\draw (\XNumeroGrupo,\YGrupo) node[anchor=west,scale=\EscalaGrupo] {\footnotesize (6) };
		\draw (\XGrupo,\YGrupo) node[anchor=west,scale=\EscalaGrupo] {\footnotesize 
			$\color{black} \RTres$};
		
		\draw[color=\ColorENegativa] (\XTres,\YGrupo)--(\XCincoOnceavos,\YGrupo);
		\filldraw [color=\ColorENegativa,fill=white] (\XTres,\YGrupo) circle (2pt);
		\filldraw [color=\ColorENegativa,fill=white] (\XCincoOnceavos,\YGrupo) circle (2pt); 
		 
		% (2) % % % % % % % % % % % % % % % % % % % %
		\def\nGrupo{2}
		\def\YGrupo{\YPrimerGrupo-\nGrupo*\YSepEntreClases+\YSepEntreClases}
		
		\draw (\XNumeroGrupo,\YGrupo) node[anchor=west,scale=\EscalaGrupo] {\footnotesize (7) };
		\draw (\XGrupo,\YGrupo) node[anchor=west,scale=\EscalaGrupo] {\footnotesize 
			$\color{black} \RTres$};
		
		\draw[color=\ColorENegativa] (\XTresMedios,\YGrupo)--(\XSieteDecimos,\YGrupo);
		\filldraw [color=\ColorENegativa,fill=white] (\XTresMedios,\YGrupo) circle (2pt);
		\filldraw [color=\ColorENegativa,fill=white] (\XSieteDecimos,\YGrupo) circle (2pt);

		% (3) % % % % % % % % % % % % % % % % % % % %
		\def\nGrupo{3}
		\def\YGrupo{\YPrimerGrupo-\nGrupo*\YSepEntreClases+\YSepEntreClases}
		
		\draw (\XNumeroGrupo,\YGrupo) node[anchor=west,scale=\EscalaGrupo] {\footnotesize (4) };
		\draw (\XGrupo,\YGrupo) node[anchor=west,scale=\EscalaGrupo] {\footnotesize 
			$\color{black} \HTres$};
		
		\draw[color=\ColorENegativa] (\XTres,\YGrupo)--(\XTresOnceavos,\YGrupo);
		\filldraw [color=\ColorENegativa,fill=white] (\XTres,\YGrupo) circle (2pt);
		\filldraw [color=\ColorENegativa,fill=white] (\XUnMedio,\YGrupo) circle (2pt);
		\filldraw [color=\ColorENegativa,fill=white] (\XTresOnceavos,\YGrupo) circle (2pt);

		% (4) % % % % % % % % % % % % % % % % % % % %
		\def\nGrupo{4}
		\def\YGrupo{\YPrimerGrupo-\nGrupo*\YSepEntreClases+\YSepEntreClases}
		
		\draw (\XNumeroGrupo,\YGrupo) node[anchor=west,scale=\EscalaGrupo] {\footnotesize (5) };
		\draw (\XGrupo,\YGrupo) node[anchor=west,scale=\EscalaGrupo] {\footnotesize 
			$\color{black} \HTres$};
					
		\draw[color=\ColorENegativa] (\XTres,\YGrupo-\YSepDosMetricas)--(\XVeintiunCincuentaydosavos,\YGrupo-\YSepDosMetricas);
		\filldraw [color=\ColorENegativa,fill=white] (\XTres,\YGrupo-\YSepDosMetricas) circle (2pt);
		\filldraw [color=\ColorENegativa,fill=white] (\XVeintiunCincuentaydosavos,\YGrupo-\YSepDosMetricas) circle (2pt); 
		
		\draw[color=\ColorENegativa] 			
			(\XTresMedios,\YGrupo+0.7*\YSepDosMetricas)--(\XSieteQuinceavos,\YGrupo+0.7*\YSepDosMetricas);
		\filldraw [color=\ColorENegativa,fill=white] 	
			(\XTresMedios,\YGrupo+0.7*\YSepDosMetricas) circle (2pt);
		\filldraw [color=\ColorENegativa,fill=white] 
			(\XSieteQuinceavos,\YGrupo+0.7*\YSepDosMetricas) circle (2pt);

		% (5) % % % % % % % % % % % % % % % % % % % %
		\def\nGrupo{5}
		\def\YGrupo{\YPrimerGrupo-\nGrupo*\YSepEntreClases+\YSepEntreClases}
		
		\draw (\XNumeroGrupo,\YGrupo) node[anchor=west,scale=\EscalaGrupo] {\footnotesize (6) };
		\draw (\XGrupo,\YGrupo) node[anchor=west,scale=\EscalaGrupo] {\footnotesize 
			$\color{black} \HTres$};

		\draw[color=\ColorENegativa] (\XTres,\YGrupo)--(\XSieteDieciseisavos,\YGrupo);
		\filldraw [color=\ColorENegativa,fill=white] (\XTres,\YGrupo) circle (2pt);
		\filldraw [color=\ColorENegativa,fill=white] (\XSieteDieciseisavos,\YGrupo) circle (2pt);

		% (6) % % % % % % % % % % % % % % % % % % % %
		\def\nGrupo{6}
		\def\YGrupo{\YPrimerGrupo-\nGrupo*\YSepEntreClases+\YSepEntreClases}
		
		\draw (\XNumeroGrupo,\YGrupo) node[anchor=west,scale=\EscalaGrupo] {\footnotesize {(2)} };
		\draw (\XGrupo,\YGrupo) node[anchor=west,scale=\EscalaGrupo] {\footnotesize 
			$\color{black} \EUnoUno$};
		
		\draw[color=\ColorENegativa] (\XTres,\YGrupo)--(\XTresDecimos,\YGrupo);
		\filldraw [color=\ColorENegativa,fill=white] (\XTres,\YGrupo) circle (2pt);
		\filldraw [color=\ColorENegativa,fill=white] (\XUnMedio,\YGrupo) circle (2pt);
		\filldraw [color=\ColorENegativa,fill=white] (\XTresDecimos,\YGrupo) circle (2pt);

		% (7) % % % % % % % % % % % % % % % % % % % %
		\def\nGrupo{7}
		\def\YGrupo{\YPrimerGrupo-\nGrupo*\YSepEntreClases+\YSepEntreClases}
		
		\draw (\XNumeroGrupo,\YGrupo) node[anchor=west,scale=\EscalaGrupo] {\footnotesize {(5)} };
		\draw (\XGrupo,\YGrupo) node[anchor=west,scale=\EscalaGrupo] {\footnotesize 
			$\color{black} \EUnoUno$};

		\draw[densely dotted,color=\ColorENegativa] 	
			(\XUno,\YGrupo-\YSepInfMetricas/4)--(\XUnTercio,\YGrupo-\YSepInfMetricas/4);
					\draw[densely dotted,color=\ColorENegativa] 	
			(\XUno,\YGrupo-\YSepInfMetricas/8)--(\XUnTercio,\YGrupo-\YSepInfMetricas/8);
					\draw[densely dotted,color=\ColorENegativa] 	
			(\XUno,\YGrupo-\YSepInfMetricas/3)--(\XUnTercio,\YGrupo-\YSepInfMetricas/3);
%		\draw[densely dotted,color=\ColorENegativa] 
%			(\XUno,\YGrupo)--(\XUnTercio,\YGrupo);		
%		\draw[densely dotted,color=\ColorENegativa] 
%			(\XUno,\YGrupo+\YSepInfMetricas/4)--(\XUnTercio,\YGrupo+\YSepInfMetricas/4);
%		\draw[densely dotted,color=\ColorENegativa] 
%			(\XUno,\YGrupo+\YSepInfMetricas/2)--(\XUnTercio,\YGrupo+\YSepInfMetricas/2);

		\draw[color=\ColorENegativa] (\XUno,\YGrupo-\YSepInfMetricas/2)--(\XUnTercio,\YGrupo-\YSepInfMetricas/2);
		\filldraw [color=\ColorENegativa,fill=white] (\XUno,\YGrupo-\YSepInfMetricas/2) circle (2pt);
		\filldraw [color=\ColorENegativa,fill=white] (\XUnMedio,\YGrupo-\YSepInfMetricas/2) circle (2pt);
		\filldraw [color=\ColorENegativa,fill=white] (\XUnTercio,\YGrupo-\YSepInfMetricas/2) circle (2pt);

		% (8) % % % % % % % % % % % % % % % % % % % %
		\def\nGrupo{8}
		\def\YGrupo{\YPrimerGrupo-\nGrupo*\YSepEntreClases+\YSepEntreClases}
		
		\draw (\XNumeroGrupo,\YGrupo) node[anchor=west,scale=\EscalaGrupo] {\footnotesize {(2)} };
		\draw (\XGrupo,\YGrupo) node[anchor=west,scale=\EscalaGrupo] {\footnotesize 
			$\color{black} \EDos$};

		\draw[color=\ColorEPositiva] 
			(\XUnTercio,\YGrupo-\YSepDosMetricas-0.03)--(\XCincoMenosDosRaizSiete,\YGrupo-\YSepDosMetricas-0.03);
		\filldraw [color=\ColorEPositiva,fill=white] 
			(\XUnTercio,\YGrupo-\YSepDosMetricas-0.03) circle (2pt);
		\filldraw[color=\ColorEPositiva,fill=\ColorEPositiva] 
			(\XCincoMenosDosRaizSiete,\YGrupo-\YSepDosMetricas-0.03) circle (2pt);

		\draw[color=\ColorEPositiva] 		
			(\XTresDecimos,\YGrupo+1*\YSepDosMetricas)--(\XCincoMenosDosRaizSiete,\YGrupo+1*\YSepDosMetricas);
		\filldraw [color=\ColorEPositiva,fill=white] 
			(\XTresDecimos,\YGrupo+1*\YSepDosMetricas) circle (2pt);
		\filldraw [color=\ColorEPositiva,fill=white] 
			(\XCincoMenosDosRaizSiete,\YGrupo+1*\YSepDosMetricas) circle (2pt);

		% (9) % % % % % % % % % % % % % % % % % % % %
		\def\nGrupo{9}
		\def\YGrupo{\YPrimerGrupo-\nGrupo*\YSepEntreClases+\YSepEntreClases}
		
		\draw (\XNumeroGrupo,\YGrupo) node[anchor=west,scale=\EscalaGrupo] {\footnotesize {(3)} };
		\draw (\XGrupo,\YGrupo) node[anchor=west,scale=\EscalaGrupo] {\footnotesize 
			$\color{black} \EDos$}; 
		
		\draw[color=\ColorENegativa] (\XTres,\YGrupo)--(\XUnTercio,\YGrupo);
		\filldraw [color=\ColorENegativa,fill=white] (\XTres,\YGrupo) circle (2pt);
		\filldraw [color=\ColorENegativa,fill=white] (\XUnTercio,\YGrupo) circle (2pt);

		% (10) % % % % % % % % % % % % % % % % % % % %
		\def\nGrupo{10}
		\def\YGrupo{\YPrimerGrupo-\nGrupo*\YSepEntreClases+\YSepEntreClases}
		
		\draw (\XNumeroGrupo,\YGrupo) node[anchor=west,scale=\EscalaGrupo] {\footnotesize {(4)} };
		\draw (\XGrupo,\YGrupo) node[anchor=west,scale=\EscalaGrupo] {\footnotesize 
			$\color{black} \EDos$};
		
		\draw[color=\ColorENegativa] (\XTresMedios,\YGrupo)--(\XUnTercio,\YGrupo);
		\filldraw [color=\ColorENegativa,fill=white] (\XTresMedios,\YGrupo) circle (2pt);
		\filldraw [color=\ColorENegativa,fill=white] (\XUnTercio,\YGrupo) circle (2pt);
		
	\end{tikzpicture}
\caption{Range of the parameter $t$ for non-symmetric homogeneous $\mathcal{F}_t$-critical metrics with non-zero energy.}
\label{figura-4}
\end{figure}

There is only one family of left-invariant $\mathcal{F}_t$-critical metrics with positive energy (colored in green in Figure~\ref{figura-4}), which corresponds to  left-invariant metrics on $\mathbb{R}\ltimes \widetilde{E}(2)$ given in Theorem~\ref{th:E(1,1)}--(2). In all other cases the energy is negative. 
As in the case of zero energy, metrics corresponding to different families in Figure~\ref{figura-4} are not homothetic, and metrics within the same family correspond to different homothety classes too. 
For each admissible value of $t$, the corresponding $\mathcal{F}_t$-critical metrics are unique in Families~(6) and (7) of Theorem~\ref{th:criticas-R3}, (4) and (6) of Theorem~\ref{th:heisenberg}, (3) and (4) of Theorem~\ref{th:E(1,1)}. Family~(2) of Theorem~\ref{th:E(1,1)} includes metrics which are realized on $\mathbb{R}\ltimes E(1,1)$, with one homothetic class for each value of $t$, and metrics which are realized on $\mathbb{R}\ltimes \widetilde{E}(2)$, with two non-homothetic solutions for each $t$ if $-3/10<t<5-2\sqrt{7}$ and only one otherwise. There are also two non-homothetic solutions for Family~(5) in Theorem~\ref{th:heisenberg} if $-3/2<t<-7/15$,  and a unique $\mathcal{F}_t$-critical metric otherwise. Finally, in Family~(5) of  Theorem~\ref{th:E(1,1)}, which is realized in $\mathbb{R}\ltimes E(1,1)$, there are infinitely many non-homothetic $\mathcal{F}_t$-critical metrics.

\begin{remark}\rm  \label{re:Bach-llanas}
Bach-flat four-dimensional metrics are critical for the functional $\mathcal{F}_{-1/3}$. Thus, the main result in \cite{CL-GM-GR-GR-VL}, which provides all Bach-flat four-dimensional homogeneous metrics, follows from the results above (see also \cite{AGS}). Homogeneous metrics which are  $\mathcal{F}_{-1/3}$-critical with zero energy are locally conformally flat products, or the left-invariant metric given in an orthonormal basis $\{ e_1,\dots, e_4\}$ on $\mathbb{R}\ltimes \mathcal{H}^3$ with Lie brackets (see Theorem~\ref{th:heisenberg}--(2))
	$$
	{[e_1,e_2]}=  e_3, \,\,
	{[e_1,e_4]} =-\tfrac{1}{4}\!\sqrt{7\!-\!3\sqrt{5}}\, e_1,\,\,
	{[e_2,e_4]}=\tfrac{1}{4}\!\sqrt{7\!+\!3\sqrt{5}}\, e_2,\,\,
	{[e_3,e_4]}=\tfrac{\sqrt{5}}{2\sqrt{2}}\, e_3,
	$$
or on $\mathbb{R}\ltimes\mathbb{R}^3$ with (see  Theorem~\ref{th:criticas-R3}--(3))
$$
[e_1,e_4] = e_1,\quad 
[e_2,e_4] = (\sqrt{p}-1)^2\,e_2,\quad 
[e_3,e_4] =p\,e_3,
\qquad \tfrac{1}{4}\leq p< 1\,.
$$
Non-Einstein Bach-flat metrics with non-zero energy correspond to the left-invariant metric on $\mathbb{R}\ltimes E(1,1)$ given by (see Theorem~\ref{th:E(1,1)}--(2))
$$
[e_1,e_3]= (2\!-\!\sqrt{3})\, e_2,\,\,\,
[e_1,e_4] = \sqrt{\!6\!-\!3\sqrt{3}}\, e_1,\,\,\, 
[e_2,e_3]= e_1,\,\,\, 
[e_2,e_4]=\sqrt{\!6\!-\!3\sqrt{3}}\, e_2\,,
$$	
%$$
%[e_2,e_3]= e_1,\,\,\, 
%[e_1,e_3]=  (2\!+\!\sqrt{3})\, e_2,\,\,\,
%{[e_4,e_1]} = \sqrt{\!6\!+\!3\sqrt{3}}\, e_1,\,\,\, 
%{[e_4,e_2]}=\sqrt{\!6\!+3\!\sqrt{3}}\, e_2\,,
%$$		
or the self-dual left-invariant metric 
$$
[e_1, e_2]=e_3,\,\quad
[e_1, e_4]= e_1,\,
\quad 
[e_2,  e_4]=  e_2,\,
\quad
[e_3, e_4]= 2  e_3,\,
$$
%$$
%{[e_1, e_2]}=e_3\,\quad
%{[e_4, e_1]}= e_1-\alpha e_2,
%\quad 
%[e_4,  e_2]= \alpha e_1+  e_2,
%\quad
%{[e_4, e_3]}= 2  e_3\,,
%$$
on a semi-direct extension $\mathbb{R}\ltimes \mathcal{H}^3$ of the Heisenberg algebra   (see Theorem~\ref{th:heisenberg}--(4) and Remark~\ref{re:H3-grafico}). We shall notice that the families of Bach-flat metrics described above are simplified with respect to those in \cite{CL-GM-GR-GR-VL} in such a way that any two of them are non-homothetic.
\end{remark}

\begin{remark}\rm
	It follows from the results in sections \ref{se:SL(2,R)}--\ref{se:R} that homogeneous $\mathcal{F}_t$-critical metrics with $t\geq-1/4$ are either Einstein or homothetic to the product $\mathbb{S}^2\times \mathbb{H}^2$. Therefore, in this context any $\mathcal{F}_t$-critical metric with $t\geq-1/4$ is critical for all quadratic curvature functionals. This is in sharp contrast with the three-dimensional setting, where there are non-symmetric homogeneous critical metrics for any quadratic curvature functional (see Figure~\ref{figura-1} and \cite{BV-GR-CO}).
	
In particular, for the functionals $\mathcal{F}_{-1/4}$ (which is equivalent to the $L^2$-norm of the curvature tensor) or $\mathcal{F}_{-2/9}$ (which is equivalent to the $L^2$-norm of the Schouten tensor) any homogeneous critical metric is necessarily symmetric (see \cite{Kang} for the special case of homogeneous manifolds with finite volume). This behavior significantly differs from the three-dimensional analogues, which were considered in \cite{HNS, Lamontagne}. 
\end{remark}

\subsection{Structure of the paper}
We analyze the existence of non-symmetric homogeneous $\mathcal{F}_t$-critical metrics by considering the possible left-invariant metrics on four-dimensional Lie groups through sections \ref{se:SL(2,R)}--\ref{se:R}.
In Section~\ref{se:SL(2,R)} we show that there are no $\mathcal{F}_t$-critical metrics in $\widetilde{SL}(2, \mathbb{R})\times\mathbb{R}$ and the only $\mathcal{F}_t$-critical metric in $SU(2)\times\mathbb{R}$ is isomorphically homothetic to the symmetric locally conformally flat metric on $\mathbb{R}\times\mathbb{S}^3$ (see Theorem~\ref{th:non-solvable}). This situation strongly contrasts with the three-dimensional case, where there are plenty of $\mathcal{F}_t$-critical metrics on $\widetilde{SL}(2, \mathbb{R})$ and $SU(2)$ (see Figure~\ref{figura-1}).
Non-symmetric critical metrics on semi-direct extensions of the Euclidean and Poincaré groups are considered in Section~\ref{se:E(1,1)}. We describe all such critical metrics in Theorem~\ref{th:E(1,1)}.
Left-invariant $\mathcal{F}_t$-critical metrics on semi-direct extensions of the Heisenberg group are considered in Section~\ref{se:H}. Theorem~\ref{th:heisenberg} provides a complete list of the metrics with the corresponding functionals for which they are critical.
Semi-direct extensions of the Abelian group are considered in Section~\ref{se:R}. The situation is more involved for this group, as shown in Theorem~\ref{th:criticas-R3}. Indeed, while all Ricci solitons are $\mathcal{F}_t$-critical with zero energy, there are critical metrics on $\mathbb{R}\ltimes\mathbb{R}^3$ with vanishing energy which are not homothetic to any Ricci soliton (see Corollary~\ref{cor:hoy}).

In order to describe the left-invariant critical metrics above one needs to solve some systems of
polynomial equations on the parameter $t$ and the structure constants. Whenever a system of
polynomial equations is simple, finding all common roots may be an
easy task, but if the number of polynomials and their degree
increase, it usually becomes unmanageable. Let
$\mathfrak{S}$ be a system of polynomials $\mathfrak{P}_i\in
\mathbb{R}[x_1,\dots,x_n]$. An $n$-tuple $\vec{x}$ is a solution for
$\mathfrak{S}$ if and only if $\mathfrak{P}_i(\vec{x})=0$, for all
$i$. It is a fundamental observation that $\vec{x}$ is a solution
for the system if and only if it is a solution of the ideal
$\mathcal{I}=\langle\mathfrak{P}_i\rangle$  generated by the
polynomials of the system.
If two sets of polynomials generate the same ideal, the
corresponding zero sets must be identical. Hence one may try to
simplify the problem looking for ``better'' polynomials in the
ideal, since the zero sets are the same. Gr\"obner bases provide a
useful tool in solving systems of polynomial equations by following
the above strategy.
The Hilbert Basis Theorem guarantees that any non-zero ideal $\mathcal{I}$  admits a Gr\"obner basis. Furthermore,
any Gr\"obner basis for an ideal $\mathcal{I}$ is a basis of $\mathcal{I}$  and, thus, it is specially suited to analyze the ideal membership problem. 
We refer to \cite{Cox} for more information on Gr\"obner bases.

All left-invariant $\mathcal{F}_t$-critical metrics are obtained by solving the corresponding polynomial equations directly.  However, in order to show that no other solutions may exist, we use Gröbner bases in some particular cases.
The calculations of the needed Gr\"obner bases in this paper have been performed with the computer software of {\sc Singular} \cite{Singular} and double checked with {\sc Mathematica}.

\section{Left-invariant $\mathcal{F}_t$-critical metrics on $\widetilde{SL}(2, \mathbb{R}) \times \mathbb{R}$ and $SU(2) \times \mathbb{R}$}\label{se:SL(2,R)}

Let $\mathfrak{g}=\mathfrak{g}_3\times \mathbb{R}$ be a direct extension of the unimodular Lie algebra $\mathfrak{g}_3=\mathfrak{sl}(2,\mathbb{R})$ or $\mathfrak{g}_3=\mathfrak{su}(2)$. Let $\langle \cdot, \cdot \rangle$ be an inner product on $\mathfrak{g}$ and let $\langle \cdot, \cdot \rangle_3$ denote its restriction to $\mathfrak{g}_3$. Following the work of Milnor \cite{Milnor}, there exists an orthonormal basis $\{\mathbf{v}_1,\mathbf{v}_2, \mathbf{v}_3\}$ of $\mathfrak{g}_3$ such that
\begin{equation}\label{basis_non-solvable-1}
[\mathbf{v}_2,\mathbf{v}_3]= \lambda_1 \mathbf{v}_1,
\quad 
[\mathbf{v}_3, \mathbf{v}_1]= \lambda_2 \mathbf{v}_2,
\quad 
[\mathbf{v}_1, \mathbf{v}_2]= \lambda_3 \mathbf{v}_3,
\end{equation}
where $\lambda_1, \lambda_2,\lambda_3 \in \mathbb{R}$ and $\lambda_1 \lambda_2\lambda_3 \neq 0$. Moreover, the associated Lie group corresponds to $SU(2)$  (resp., $\widetilde{SL}(2,\mathbb{R})$) if  $\lambda_1, \lambda_2,\lambda_3$ do not change sign  (resp.,   change sign).

Let $\{\mathbf{v}_1, \mathbf{v}_2, \mathbf{v}_3, \mathbf{v}_4\}$ be a basis of $\mathfrak{g}$ such that $\{\mathbf{v}_1, \mathbf{v}_2, \mathbf{v}_3\}$ are given by Equation~\eqref{basis_non-solvable-1} and $\mathfrak{g}=\mathfrak{g}_3\oplus\mathbb{R}\mathbf{v}_4$. Since $\mathbb{R}\mathbf{v}_4$ need not be orthogonal to $\mathfrak{g}_3$, we set $\tilde k_i=\langle \mathbf{v}_i, \mathbf{v}_4 \rangle$, for $i = 1,2,3$. Let $\bar e_4= \mathbf{v}_4-\sum_i \tilde k_i\mathbf{v}_i$ and normalize it to get an orthonormal basis $\{ e_1,\dots,e_4\}$ of $\mathfrak{g}= \mathfrak{g}_3 \oplus \mathbb{R}$, where $e_i=\mathbf{v}_i$ and $\tilde k_i=\|\overline{e}_4\| k_i$ for $i=1,2,3$, so that 
\begin{equation}\label{eq:non-solvable}
\begin{array}{l}
 [e_2,e_3]= \lambda_1 e_1,
	\qquad 
	[e_3, e_1]= \lambda_2 e_2,
	\qquad 
	[e_1, e_2]= \lambda_3 e_3,
\\
\noalign{\medskip}

[e_1,e_4] =  k_3 \lambda_2 e_2- k_2 \lambda_3  e_3,
\qquad\!
[e_2,e_4]= k_1 \lambda_3  e_3-  k_3 \lambda_1 e_1,

\\ 
\noalign{\medskip} 

[e_3,e_4]= k_2 \lambda_1 e_1-  k_1 \lambda_2 e_2.
\end{array}
\end{equation}	
%where $\tilde k_i=\|\overline{e}_4\| k_i$.
%In order to simplify the expressions,  in what follows  we set $ k_i=\frac{1}{R} \tilde k_i$.

Left-invariant metrics above are never Einstein and 
have scalar curvature 
$$
\begin{array}{l}
\tau=-\frac{1}{2}\left\{ 
\lambda_1^2+\lambda_2^2+\lambda_3^2-2(\lambda_1\lambda_2+\lambda_1\lambda_3+\lambda_2\lambda_3)
\right.

\\
\noalign{\medskip}
\phantom{\tau=-\frac{1}{2}\{}

\left.
+k_1^2(\lambda_2-\lambda_3)^2
+k_2^2(\lambda_1-\lambda_3)^2
+k_3^2(\lambda_1-\lambda_2)^2\right\}.
\end{array}
$$
The scalar curvature vanishes for values of the parameters $\lambda_i, k_i$ on a five-dimensio\-nal manifold of $\mathbb{R}^6$, since $\tau=0$ is a regular value of the function above.
Hence $\mathcal{S}$-critical metrics exist for suitable values of the structure constants.

\begin{example}\label{ex:lcf-product}\rm
The case $\lambda_1=\lambda_2=\lambda_3=\lambda$ is of particular interest. The vector field $e_4$ is parallel, so the manifold splits as a Riemannian product. Moreover, the submanifold corresponding to $\mathfrak{g}_3$ is Einstein and, hence, of constant sectional curvature $\kappa=\lambda^2/4$. Therefore, these examples are locally isometric to a product $\mathbb{R}\times N^3(\kappa)$, so they are locally conformally flat and locally symmetric (see  the work of Takagi \cite{Takagi}).
\end{example}

The following result shows that left-invariant metrics \eqref{eq:non-solvable} are critical for some quadratic curvature functional $\mathcal{F}_t$ if and only if they correspond to those described in Example~\ref{ex:lcf-product}.

\begin{theorem}\label{th:non-solvable}
A left-invariant metric on $\widetilde{SL}(2, \mathbb{R}) \times \mathbb{R}$ or $SU(2) \times \mathbb{R}$ is critical for some quadratic curvature functional $\mathcal{F}_t$ if and only if  it is  locally isometric to a product $\mathbb{R}\times N^3(\kappa)$, which occurs if $\lambda_1=\lambda_2=\lambda_3=\lambda$ in \eqref{eq:non-solvable} and $\kappa=\lambda^2/4$.
\end{theorem}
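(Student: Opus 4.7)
The plan is to establish both implications. The \emph{sufficiency} is immediate: if $\lambda_1=\lambda_2=\lambda_3=\lambda$, then Example~\ref{ex:lcf-product} identifies the metric as locally isometric to the product $\mathbb{R}\times N^3(\lambda^2/4)$, and item~(ii) of Section~\ref{ss:symmetric} asserts that such a locally conformally flat product is $\mathcal{F}_{-1/3}$-critical, exhibiting a quadratic functional for which the metric is critical.

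For the \emph{necessity}, I would proceed as follows. Working with the seven-parameter family \eqref{eq:non-solvable} (with $\lambda_1\lambda_2\lambda_3\neq 0$ and $k_1,k_2,k_3$ free), I would first compute, in the orthonormal basis $\{e_1,\ldots,e_4\}$, the Levi-Civita connection, the Ricci tensor $\rho$, the scalar curvature $\tau$ (already displayed above), and the derived quantities $\Delta\rho$ and $R[\rho]$. Since the metrics in \eqref{eq:non-solvable} are never Einstein (as noted before the statement), the first equation in \eqref{gradFt} can only hold when $\tau=0$; in particular, for generic parameters the $\mathcal{S}$-functional plays no role, and criticality for some $\mathcal{F}_t$ reduces to the algebraic system obtained by substituting all computed tensors into the second equation of \eqref{gradFt}. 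This yields one polynomial equation per independent component of that tensor equation, in the unknowns $(\lambda_1,\lambda_2,\lambda_3,k_1,k_2,k_3,t)$.

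The main obstacle is to show that the only solutions of this polynomial system have $\lambda_1=\lambda_2=\lambda_3$. As outlined at the end of Section~\ref{sect:summary}, I would analyze the system via a Gr\"obner basis computation, after saturating by $\lambda_1\lambda_2\lambda_3$ so as to work in the Zariski-open locus where the parametrization is genuine. The expected outcome is that the basis contains polynomials forcing $\lambda_i-\lambda_j=0$ for all $i,j$, once spurious branches have been ruled out by additional Gr\"obner elements and by exploiting the $S_3$-symmetry that simultaneously permutes the indices of $\lambda_i$ and $k_i$. A useful simplification is to first dispose of the sub-case $k_1=k_2=k_3=0$, where the system decomposes into factors of the form $(\lambda_i-\lambda_j)^2$ and the conclusion becomes transparent; the general case with $(k_1,k_2,k_3)\neq 0$ then follows by the same ideal-theoretic argument in a larger polynomial ring.

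Combining the algebraic conclusion $\lambda_1=\lambda_2=\lambda_3$ with Example~\ref{ex:lcf-product} identifies the metric as a product $\mathbb{R}\times N^3(\kappa)$ with $\kappa=\lambda^2/4$, completing the proof. The hard part is controlling the Gr\"obner basis computation in seven variables with equations of degree up to four; if the direct computation proves intractable, a fallback is to split into cases according to which of the $k_i$ vanish, reducing the effective number of variables before invoking the elimination machinery.
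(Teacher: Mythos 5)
Your proposal follows essentially the same route as the paper's proof: compute the critical tensor $\mathfrak{F}_t$, normalize $\lambda_1$ by a homothety, encode the non-vanishing of the remaining $\lambda_i$ ideal-theoretically (your saturation is the paper's auxiliary variables $\lambda_i\tilde\lambda_i-1$), split into cases according to which of the $k_i$ vanish, and extract from a Gr\"obner basis the polynomials forcing $\lambda_1=\lambda_2=\lambda_3$ (and $t=-1/3$), after which Example~\ref{ex:lcf-product} concludes. The plan is correct and matches the paper's argument.
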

\begin{proof}
A left-invariant metric \eqref{eq:non-solvable} is $\mathcal{F}_t$-critical if and only if  the symmetric $(0,2)$-tensor field  
$
\mathfrak{F}_t=
-\Delta \rho 
+\frac{1}{2} (\|\rho\|^2 +t\tau^2) g 
-2 R[\rho] 
-2t\tau\rho
$
vanishes. 
The condition $\mathfrak{F}_t=0$ determines a system of polynomial equations on   the structure constants \eqref{eq:non-solvable} together with $t$. 
Since the structure constant  $\lambda_1$ is non-zero, we may consider a representative with $\lambda_1=1$ in the homothetic class. Moreover, we  introduce additional variables $\tilde\lambda_2$ and $\tilde\lambda_3$ to express that the structure constants $\lambda_2$ and $\lambda_3$ are non-zero by means of the polynomials $\lambda_2 \tilde\lambda_2-1$ and $\lambda_3 \tilde\lambda_3-1$.  
In the rest of the proof we will work in  the polynomial ring $\mathbb{R}[\tilde\lambda_3,\lambda_3,\tilde\lambda_2,\lambda_2,\lambda_1,k_1,k_2,k_3,t]$, where we consider the lexicographic order. 

In order to analyze the critical condition we consider the case  $k_1=$~$k_2=0$, the case $k_1=0$, $k_2k_3\neq 0$, and the case $k_1k_2k_3\neq0$ separately. In the second case we simplify the non-zero factors $k_2$ and $k_3$ by considering the polynomials  $\mathfrak{\overline F}_t(e_2,e_3)=\frac{1}{k_2k_3}\mathfrak{F}_t(e_2,e_3)$, $\mathfrak{\overline F}_t(e_2,e_4)=\frac{1}{k_2}\mathfrak{F}_t(e_2,e_4)$ and $\mathfrak{\overline F}_t(e_3,e_4)=\frac{1}{k_3}\mathfrak{F}_t(e_3,e_4)$. Similarly, if   $k_1k_2k_3\neq0$
we simplify the non-zero variables $k_1$, $k_2$, $k_3$ by multiplying the polynomials $\mathfrak{F}_t(e_i,e_j)$ with $i\neq j$ as in the previous case. Let $\mathfrak{\overline F}_t(e_i,e_j)$ denote the polynomials $\mathfrak{F}_t(e_i,e_j)$ including those simplifications in each case and let   $\mathcal{I}$  be the ideal   spanned by $\{\mathfrak{\overline F}_t(e_i,e_j)\}\cup\{\lambda_1-1, \lambda_2 \tilde\lambda_2-1, \lambda_3 \tilde\lambda_3-1\}$.

Starting with the case $k_1=k_2=0$, we   compute  a Gr\"obner basis   of the ideal 
${\mathcal{I}_1}=\langle \mathcal{I}\cup\{k_1,k_2\} \rangle$. We obtain $9$ polynomials, including the following:
$$
\mathbf{g}_{1}^1=3t+1,
\quad 
\mathbf{g}_{1}^2=(\lambda_2-1)^2,
\quad 
\mathbf{g}_{1}^3=2\lambda_3-\lambda_2-1.
$$
Thus $\lambda_1=\lambda_2=\lambda_3=1$ and $t=-1/3$. A straightforward calculation now shows that the underlying metric is locally conformally flat and isometric to a product $\mathbb{R}\times N^3(\kappa)$, where $N^3(\kappa)$ is a three-manifold of constant sectional curvature $\kappa=1/4$.

We analyze the case   $k_1=0$,  $k_2k_3\neq0$ by constructing  a Gr\"obner basis  of the ideal
${\mathcal{I}_2}=\langle \mathcal{I}\cup\{k_1\} \rangle$.  We obtain $26$ polynomials, among which we have 
%{\purple a min sáenme 24 polinomio e, en concreto, estes dous: $\mathbf{g}_{2}^{2}=(k_2^2+k_3^2+1)(\lambda_2-1)$ e $\mathbf{g}_{2}^{3}=(k_2^2+k_3^2+1)(\lambda_3-1)$, que son equivalentes aos escritos, pero por tratar de que saian os que se di}
$$
\mathbf{g}_{2}^{1}=(k_2^2+k_3^2+1)(3t+1),
\,\, 
\mathbf{g}_{2}^{2}=(k_2^2+k_3^2+1)(\lambda_2-1)^2,
\,\, 
\mathbf{g}_{2}^{3}=(k_2^2+k_3^2+1)(\lambda_3+\lambda_2-2) .
$$
We conclude that $\lambda_1=\lambda_2=\lambda_3=1$, $t=-1/3$ and proceed as in the previous case.

Finally, if   $k_1k_2k_3\neq0$  we compute a  Gr\"obner basis for the ideal $\mathcal{I}$. This basis consists in $86$ polynomials that include
$$
\begin{array}{l}
\mathbf{g}_{3}^{1}=(k_1^2+k_2^2+k_3^2+1)^2(3t+1),
\quad 
\mathbf{g}_{3}^{2}= (k_1^2+k_2^2+k_3^2+1)(t+1)(\lambda_2-1),

\\
\noalign{\medskip}

\mathbf{g}_{3}^{3}=(k_1^2+k_2^2+k_3^2+1)(2\lambda_2+2\lambda_3-9t-7) .
\end{array}
$$
Again, these lead to $\lambda_1=\lambda_2=\lambda_3=1$, $t=-1/3$, and the result follows.
\end{proof}

\begin{remark}\rm
Since $\mathcal{F}_t$-critical metrics in Theorem~\ref{th:non-solvable} have $\lambda_1$, $\lambda_2$ and $\lambda_3$ of the same sign, they are realized on $SU(2)\times\mathbb{R}$. Remarkably, 
	the product Lie group $\widetilde{SL}(2,\mathbb{R})\times\mathbb{R}$ does not admit any $\mathcal{F}_t$-critical left-invariant metric, although the three-dimensional Lie groups $SU(2)$ and $\widetilde{SL}(2,\mathbb{R})$ admit many $\mathcal{F}_t$-critical metrics as shown in Figure~\ref{figura-1} (see \cite{BV-GR-CO}). 
	 
	Critical metrics with zero energy exhibit a special behaviour. Indeed, let $(N,g_N)$ be a homogeneous manifold and let $M=\mathbb{R}\times N$ with $g=dr^2+g_N$ be the $n$-dimensional product manifold. Then a direct calculation of the symmetric tensor field $\mathfrak{F}_t=-\Delta \rho 
		+\tfrac{2}{n} (\|\rho\|^2 +t\tau^2) g 
		-2 R[\rho] 
		-2t\tau\rho$ shows that $\mathfrak{F}_t(\partial_r,X)=0$ for all vector fields $X$  tangent to $N$, and moreover
		$$
		\begin{array}{l}
		\mathfrak{F}_t(\partial_r,\partial_r)=\tfrac{2}{n} (\|\rho_N\|^2 +t\tau_N^2),
		\\
		\noalign{\medskip}
		\mathfrak{F}_t(X,Y)=\{-\Delta \rho_N 
		+\tfrac{2}{n-1} (\|\rho_N\|^2 +t\tau_N^2) g_N
		-2 R_N[\rho_N] 
		-2t\tau_N\rho_N\}(X,Y) \\
		\noalign{\medskip}
		\phantom{\mathfrak{F}_t(X,Y)=\{\}}
		-\tfrac{2}{n(n-1)}(\|\rho_N\|^2 +t\tau_N^2)g_N(X,Y),
		\end{array}
		$$
		for all vector fields $X,Y$ tangent to $N$. Hence it immediately follows that
		\begin{quote}
			\emph{For any homogeneous manifold $(N,g_N)$, the product manifold $\mathbb{R}\times N$ is critical for some quadratic curvature functional if and only if $N$ is critical for the same curvature functional with zero energy.}
		\end{quote} 
	The $\mathcal{F}_{-1/3}$-critical metric in Theorem~\ref{th:non-solvable} is obtained by the above construction. Since the sphere $\mathbb{S}^3$ is Einstein, it is critical for all quadratic curvature functionals and, since $\|\rho\|^2=\frac{1}{3}\tau^2$, the energy is zero for the functional $\mathcal{F}_{-1/3}$.
	
	The non-symmetric four-dimensional $\mathcal{F}_t$-critical product metrics correspond to $\mathbb{R}\times \mathcal{H}^3$  as in Theorem~\ref{th:heisenberg}--(1), $\mathbb{R}\times E(1,1)$ as in Theorem~\ref{th:E(1,1)}--(1), or semi-direct products $\mathbb{R}\ltimes\mathbb{R}^3$ determined by a diagonal derivation with eigenvalues $(1,f,0)$ as in Theorem~\ref{th:criticas-R3}--(3).

Remarkably, although any homogeneous three-dimensional manifold with a four-dimensional isometry group is critical for some quadratic curvature functional (see Figure~\ref{figura-1}), there are four-dimensional homogeneous manifolds with a higher-dimensional isometry group which are not critical for any quadratic curvature functional.
	 \end{remark}

\begin{remark}\rm
It follows from Theorem~\ref{th:non-solvable} that any non-symmetric four-dimensional homogeneous $\mathcal{F}_t$-critical metric is locally isometric to a left-invariant metric on a solvable Lie group.	
\end{remark}

\section{Left-invariant $\mathcal{F}_t$-critical metrics on $\mathbb{R}\ltimes E(1,1)$ and $\mathbb{R}\ltimes \widetilde{E}(2)$}\label{se:E(1,1)}

Let $\mathfrak{g}=\mathbb{R}\ltimes\mathfrak{g}_3$ be a semi-direct extension of the unimodular Lie algebra $\mathfrak{g}_3=\mathfrak{e}(1,1)$ or $\mathfrak{g}_3=\mathfrak{e}(2)$. Let $\langle \cdot, \cdot \rangle$ be an inner product on $\mathfrak{g}$ and $\langle \cdot, \cdot \rangle_3$ its restriction to $\mathfrak{g}_3$. Following the work of Milnor \cite{Milnor}, there exists an orthonormal basis $\{\mathbf{v}_1,\mathbf{v}_2, \mathbf{v}_3\}$ of $\mathfrak{g}_3$ such that 
\begin{equation}\label{basis_non-solvable}
[\mathbf{v}_2,\mathbf{v}_3]= \lambda_1 \mathbf{v}_1,
\quad 
[\mathbf{v}_3, 
\mathbf{v}_1]= \lambda_2 \mathbf{v}_2,
\quad
[\mathbf{v}_1,\mathbf{v_2}]=0,
\end{equation}
where $\lambda_1, \lambda_2 \in \mathbb{R}$ and $\lambda_1 \lambda_2 \neq 0$. Moreover, the associated Lie group corresponds to $\widetilde{E}(2)$ (resp., $E(1,1)$) if $\lambda_1, \lambda_2$ do not change sign (resp., change sign).
The algebra of derivations of $\mathfrak{g}_3$ is given by
\[ 
\operatorname{der}(\mathfrak{g}_3)=\left \{  \left( \begin{array}{ccc}
\tilde b & \tilde a& \tilde c
\\
-\frac{\lambda_2}{\lambda_1} \tilde a& \tilde b & \tilde d 
\\
0& 0 & 0 \end{array} \right);\, \tilde a, \tilde b, \tilde c, \tilde d \in \mathbb{R} \right \}.
\]
Let $\{\mathbf{v}_1, \mathbf{v}_2, \mathbf{v}_3, \mathbf{v}_4\}$ be a basis of $\mathfrak{g}$ such that $\{\mathbf{v}_1, \mathbf{v}_2, \mathbf{v}_3\}$ are given by \eqref{basis_non-solvable} and $\mathfrak{g}=\mathbb{R}\mathbf{v}_4\oplus  \mathfrak{g}_3$. 
Since $\mathbb{R}\mathbf{v}_4$ is not necessarily orthogonal to $\mathfrak{g}_3$, we set $\tilde k_i=\langle \mathbf{v}_i, \mathbf{v}_4 \rangle$, for $i = 1,2,3$. Let $\bar e_4= \mathbf{v}_4-\sum_i \tilde k_i\mathbf{v}_i$ and normalize it to get an orthonormal basis $\{ e_1,\dots,e_4\}$ of $\mathfrak{g}=  \mathbb{R} \oplus \mathfrak{g}_3 $, where $e_i=\mathbf{v}_i$ for $i=1,2,3$. 
Now, setting $a=-\tilde a\|\bar e_4\|^{-1}$, $b=-\tilde b\|\bar e_4\|^{-1}$, $c=-\tilde c\|\bar e_4\|^{-1}$, $d=-\tilde d\|\bar e_4\|^{-1}$, $A=-(\frac{\tilde a}{\lambda_1 }+\tilde k_3)\|\bar e_4\|^{-1}$, $C=-(\tilde c-\tilde k_2\lambda_1)\|\bar e_4\|^{-1}$ and $D=-(\tilde d+\tilde k_1 \lambda_2)\|\bar e_4\|^{-1}$ we express the Lie brackets as follows  
	\begin{equation}\label{eq:E(1,1), E(2)}
	\begin{array}{ll}
	[e_2,e_3]= \lambda_1 e_1,
	&\quad
	[e_1,e_3]= -\lambda_2 e_2,
	
	\\
	\noalign{\medskip}
	
	[e_1,e_4] = b e_1-\lambda_2 A e_2,
	&\quad
	[e_2,e_4]=\lambda_1 A e_1+ b e_2,
	
	\\
	\noalign{\medskip} 
	
	[e_3,e_4]=C e_1+ D e_2.
	\end{array}
	\end{equation}	

\begin{remark}\label{re:E11 E2 sc}
	\rm
	Since
	$\tau=-\frac{1}{2}\left\{(A^2+1)(\lambda_1-\lambda_2)^2+ 12 b^2+C^2+D^2\right\}$, the scalar curvature vanishes if and only if $\lambda_2=\lambda_1$ and
	$b=C=D=0$, in which case the metric is flat.
	A straightforward calculation shows that a metric \eqref{eq:E(1,1), E(2)} is Einstein if and only if it is flat or a product of two surfaces $N_1(\kappa)\times N_2(\kappa)$. In the latter case, the manifold is realized on  $\mathbb{R}\ltimes E(1,1)$ with $\lambda_1=-\lambda_2=\pm b$ and $A=C=D=0$.
	Furthermore, a non-Einstein left-invariant metric \eqref{eq:E(1,1), E(2)} is locally symmetric if and only if it is a locally conformally flat product of the form $\mathbb{R}\times N^3(\kappa)$ (which is realized on $\mathbb{R}\ltimes \widetilde{E}(2)$ with $\lambda_1=\lambda_2$  and $C=D=0$) or it is a product of two surfaces $N_1^ 2(\kappa_1)\times N_2^2 (\kappa_2)$ (which is realized on $\mathbb{R}\ltimes E(1,1)$ with $\lambda_1=-\lambda_2$ and $C=D=0$,  $b^2=(A^2+1)\lambda_2^2$). Any of the cases above is covered by the discussion in Section~\ref{ss:symmetric}.
\end{remark}

\begin{remark}\label{re:E11 E2 isometry}
	\rm
	Let $\langle\cdot,\cdot\rangle$ be a left-invariant metric on $\mathbb{R}\ltimes E(1,1)$ or $\mathbb{R}\ltimes \widetilde{E}(2)$ as in \eqref{eq:E(1,1), E(2)} with parameters $(\lambda_1,\lambda_2,A,b,C,D)$  determining the structure constants. The isometry $( e_1, e_2, e_3, e_4)\mapsto(-e_2, e_1, e_3, e_4)$ transforms the parameters into $(\lambda_2$, $\lambda_1$, $A$, $b$, $-D$, $C)$. Hence any left-invariant metric \eqref{eq:E(1,1), E(2)} with  $D=0$ is isomorphically isometric to a left-invariant metric with $C=0$.
\end{remark}

\begin{theorem}\label{th:E(1,1)}
	A non-symmetric left-invariant metric on  $\mathbb{R}\ltimes E(1,1)$ or $\mathbb{R}\ltimes \widetilde{E}(2)$ is critical for a quadratic curvature functional $\mathcal{F}_t$ if and only if it is isomorphically homothetic to one of the following:
	\begin{enumerate}
		\item The direct product $E(1,1)\times \mathbb{R}$ with Lie brackets given by $[e_1,e_3]= e_2$ and $[e_2,e_3]=e_1$. In this case $t=-1$ and $\mathcal{E}_{-1}=0$.
		\item $[e_1,e_3]=-\lambda e_2$, $[e_1,e_4]=b e_1$, $[e_2,e_3]=e_1$, $[e_2,e_4]=b e_2$, 
		\smallskip
		\\
%		with $b>0$ and 
%		$4 b^4- \left(3 \lambda^2-2 \lambda+3\right)b^2
%		-(\lambda-1)^2 \lambda=0$ for {\magenta $\lambda\in(-1,0)$, or $\lambda\in(0,1)$}. 
		with $\lambda\in(-1,0)\cup (0,1)$. For a fixed $\lambda$, the parameter $b$ is given by the two positive solutions of $ 4b^4-(3\lambda^2-2\lambda+3)b^2-(\lambda-1)^2 \lambda=0$ if $\lambda\in(-1,0)$, and the unique positive solution if $\lambda\in(0,1)$.
		In both cases  $t=-\frac{3\lambda^2-2\lambda+3}{12 b^2+(\lambda-1)^2}$ and the energy is given by $\energy_t=4\lambda(\lambda-1)^2$.

		\smallskip
		
		\item $[e_1,e_3]=-e_2$, $[e_1,e_4]=  e_1$, $[e_2,e_3]=e_1$, $[e_2,e_4]=  e_2$, $[e_3,e_4]=D e_2$,
		\smallskip
		\\
		with 
		$D> 0$. 
		In this case, $t=-\frac{3D^2+4}{D^2+12}$ and   the energy satisfies $\energy_t=-5D^2$.

		\smallskip
		\item $[e_1,e_3]=-\lambda e_2$, $[e_1,e_4]=b e_1$, $[e_2,e_3]=e_1$, $[e_2,e_4]=b e_2$, $[e_3,e_4]=D e_2$, 
		\smallskip
		\\
		with $\lambda\in(0,1)$. For a fixed $\lambda$, the parameters $D$ and $b$ are given by the only positive solution of $D^2+\lambda^2=1$ and $2b^4-(5\lambda^2-\lambda-2)b^2+\lambda(\lambda-1)=0$, respectively.
	In this case,  $t=-\frac{3b^2-2\lambda+3}{12 b^2-2\lambda+2}$ and  $\energy_t=(\lambda-1)
		\left(3  (\lambda + 3) b^2 - 2 \lambda
		\right)$.

		\smallskip
		\item  $[e_1,e_3]= e_2$, $[e_1,e_4]=b e_1+A e_2$, $[e_2,e_3]=e_1$,  $[e_2,e_4]=A e_1+b e_2$, 
		\smallskip
		\\
		with $b> 0$, $A\geq 0$ and $b^2-A^2\neq 1$.
		In this case, $t=-\frac{A^2+b^2+1}{A^2+3b^2+1}$ and $\energy_t=-16b^2$.
	\end{enumerate}
 Moreover, metrics in Family~(1) are the only non-Einstein algebraic Ricci solitons on $\mathbb{R}\ltimes E(1,1)$ (there are none on $\mathbb{R}\ltimes \widetilde{E}(2)$).
\end{theorem}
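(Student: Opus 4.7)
The plan is to reduce the problem to an explicit polynomial system in the parameters $(\lambda_1,\lambda_2,A,b,C,D,t)$ determining a metric of the form \eqref{eq:E(1,1), E(2)}, and then classify all its non-symmetric solutions.

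First I would normalize: since $\lambda_1\neq 0$, I may assume $\lambda_1=1$ within a fixed homothety class, and use Remark~\ref{re:E11 E2 isometry} to set $C=0$ throughout (so the case $C\neq 0$ is recovered by the isometry $(e_1,e_2,e_3,e_4)\mapsto(-e_2,e_1,e_3,e_4)$, replacing $D$ by the nonzero constant). Then I would compute the Levi-Civita connection from Koszul's formula using \eqref{eq:E(1,1), E(2)}, from which $\rho$, $\tau$, $R[\rho]$ and $\Delta\rho$ become explicit polynomials in the remaining five structure constants. Substituting into
\[
\mathfrak{F}_t=-\Delta\rho+\tfrac{1}{2}(\|\rho\|^2+t\tau^2)\,g-2R[\rho]-2t\tau\rho=0
\]
yields a polynomial system $\{\mathfrak{F}_t(e_i,e_j)=0\}_{i\leq j}$ in the ring $\mathbb{R}[\tilde\lambda_2,\lambda_2,A,b,D,t]$, where the auxiliary variable $\tilde\lambda_2$ with the relation $\lambda_2\tilde\lambda_2-1$ encodes $\lambda_2\neq 0$.

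Next, I would split into cases according to which of $b$, $A$, $D$ vanish, in each case forming the ideal $\mathcal{I}$ spanned by the critical polynomials together with $\lambda_1-1$, $\lambda_2\tilde\lambda_2-1$ and the vanishing variables of the case, and computing a Gröbner basis with a lexicographic order. The cases are: (a) $b=A=D=0$; (b) $b\neq 0$, $A=D=0$; (c) $b=0$, $A=0$, $D\neq 0$; (d) $b\neq 0$, $A\neq 0$, $D=0$; (e) $b\neq 0$, $A=0$, $D\neq 0$; and the generic case (f) $bAD\neq 0$. In each case, non-vanishing variables are tracked with their own inverse auxiliary variables as in Section~\ref{se:SL(2,R)}. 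I expect the leading polynomials of each Gröbner basis to force $(\lambda_2,A,b,D)$ into the algebraic relations listed in statements (1)--(5), and to pin down $t$ as the stated rational function of the surviving parameters. Case (a) gives the product $E(1,1)\times\mathbb{R}$ of Family~(1); case (b) yields Family~(2); case (c) yields Family~(3); case (d) yields Family~(5) after ruling out symmetric solutions via Remark~\ref{re:E11 E2 sc}; case (e) yields Family~(4); and the generic case (f) should have no non-symmetric solutions.

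The main obstacle will be the generic case $bAD\neq 0$, where the ideal is densely populated and the Gröbner basis is large: the challenge is to show that the only real solutions compatible with $\lambda_1=1$, $\lambda_2\neq 0$, and the auxiliary inequations are the symmetric products identified in Remark~\ref{re:E11 E2 sc}. I would rely on an elimination ordering that first isolates $t$, then $\lambda_2$, then $A$, and use the factored form of the low-degree polynomials in the basis (as in Theorem~\ref{th:non-solvable}) to peel off spurious factors. Once this is done, for each family I would explicitly compute $\tau$ and $\|\rho\|^2$ to verify the stated value of $t$ and of $\energy_t=\|\rho\|^2+t\tau^2$, check that the resulting metric is non-symmetric (using Remark~\ref{re:E11 E2 sc}), and solve the listed algebraic relations in $b,D$ to count the number of positive solutions for each $\lambda$. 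Finally, for the last sentence of the statement, I would test the algebraic soliton condition $\operatorname{Ric}=\lambda\,\operatorname{Id}+\mathfrak{D}$ with $\mathfrak{D}\in\operatorname{der}(\mathfrak{g})$ directly on each of the five families: only Family~(1) (with $b=A=C=D=0$, $\lambda_1=-\lambda_2$) admits such a derivation, matching the structure $\mathbb{R}\ltimes E(1,1)$, while the analogous check on $\mathbb{R}\ltimes\widetilde{E}(2)$ fails because $\operatorname{Ric}-\lambda\,\operatorname{Id}$ cannot be realized by an element of $\operatorname{der}(\mathfrak{g})$.
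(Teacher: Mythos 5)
Your reduction at the very first step is not valid, and it removes a substantial portion of the required analysis. Remark~\ref{re:E11 E2 isometry} only says that the isometry $(e_1,e_2,e_3,e_4)\mapsto(-e_2,e_1,e_3,e_4)$ sends the parameter vector $(\lambda_1,\lambda_2,A,b,C,D)$ to $(\lambda_2,\lambda_1,A,b,-D,C)$; it merely \emph{swaps} the roles of $C$ and $D$. Hence it lets you assume $C=0$ only when $D=0$ (or vice versa) — it cannot normalize $C$ to zero when both $C$ and $D$ are nonzero. A genuine rotation in the $\operatorname{span}\{e_1,e_2\}$-plane could absorb $C$ into $D$ only if it also preserved the brackets $[e_2,e_3]=\lambda_1 e_1$ and $[e_1,e_3]=-\lambda_2 e_2$, which forces $\lambda_1=\lambda_2$; for $\lambda_1\neq\lambda_2$ no such normalization exists. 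Consequently the case $C\neq 0$, $D\neq 0$ must be treated on its own. In the paper this is where most of the Gr\"obner-basis work actually happens: after normalizing $C=1$ one shows (via a basis element forcing $b=0$, $A=0$ or $D^2=1$) that the subcases $b=0$ and $D^2=1$ admit no critical metrics, while the subcase $A=0$, $b\neq 0$ forces $\lambda_1=\lambda_2$ and only then reduces, by the rotation $\bar e_1=(De_1-e_2)/\sqrt{D^2+1}$, $\bar e_2=(e_1+De_2)/\sqrt{D^2+1}$, to a $C=0$ metric with $D$ replaced by $\sqrt{D^2+1}$. Your plan simply skips all of this, so as written it does not establish the ``only if'' direction of the theorem.

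A secondary sign that the plan has not been tested: your assignment of cases to families is off. Family~(3) has $[e_1,e_4]=e_1$, i.e.\ $b=1\neq 0$, so it cannot arise from your case (c) ($b=0$, $A=0$, $D\neq 0$); in fact that case yields no critical metrics at all, while your case (e) ($b\neq 0$, $A=0$, $D\neq 0$) produces \emph{both} Families~(3) and (4) according to whether $b^2=1$ or not. These are fixable bookkeeping issues, but together with the $C\neq 0$ gap they mean the proposal, in its current form, does not constitute a proof.
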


\begin{remark}\rm\label{re:E-grafico}
	The non-symmetric $\mathcal{F}_t$-critical metrics in Theorem~\ref{th:E(1,1)} are realized on $\mathbb{R}\ltimes E(1,1)$ or $\mathbb{R}\ltimes \widetilde{E}(2)$  as indicated in Figure~\ref{figura-E} below. Note that, although critical metrics on the three-dimensional Euclidean group $\widetilde{E}(2)$ are flat \cite{BV-GR-CO}, the semi-direct extension $\mathbb{R}\ltimes \widetilde{E}(2)$ admits many non-symmetric $\mathcal{F}_t$-critical metrics. The range of the parameter $t$ is also included in each case. 
	\begin{figure}[h]
	\begin{center}
		\begin{tikzpicture}
		% Linea eje t
		%\draw[thick] (-5.7,0) -- (-3.1-0.25,0); 
		\draw[thick,-latex]   (-3.1-0.15,0) -- (5.7,0) node[right] {$t$}; 
		
		%\draw[thick,-latex]   (-5.7,0) -- (5.7,0) %node[right] {$t$};  
		\foreach \i/\n in 
		{-2.7/$-3$, {-0.2}/$-\frac{3}{2}$, {1.1}/$-1$, 
			{2.4}/$-\frac{1}{2}$, {3.05}/$-\frac{1}{3}$, {3.65}/$-\frac{3}{10}$, {4.45}/{\tiny $5-2\sqrt{7}$}
		}{\draw (\i,-.2)--(\i,.2);}
		
		\foreach \i/\n in 
		{-2.7/$-3$, {1.1}/$-1$
		}{\draw  (\i,.2) node[above] {\footnotesize \n};}
		\foreach \i/\n in 
		{{4.45}/{$5-2\sqrt{7}$}
		}{\draw  (\i,.215) node[above] {\tiny  \n};}
		\foreach \i/\n in 
		{{-0.2}/$-\frac{3}{2}$, {2.4}/$-\frac{1}{2}$, {3.05}/$-\frac{1}{3}$, {3.65}/$-\frac{3}{10}$
		}{\draw  (\i,.12) node[above] {\footnotesize \n};}
		
		\foreach \i/\n in 
		{
			-2.7/$-3$, {-0.2}/$-\frac{3}{2}$, {1.1}/$-1$, 
			{2.4}/$-\frac{1}{2}$, {3.05}/$-\frac{1}{3}$, {3.65}/$-\frac{3}{10}$, {4.45}/{\tiny $5-2\sqrt{7}$}
		}{
			\draw[dashed] (\i,-.25)--(\i,-3.27);
		}

		\draw (-5.45,-0.45) node {\footnotesize (1) };
		\draw (-4.2,-0.45) node {\footnotesize $\mathbb{R}\ltimes E(1,1)$\phantom{.}}; 
		\filldraw [color=\ColorECeroARS,fill=\ColorECeroARS] (1.1,-0.45) circle (2pt);

		\draw (-5.45,-1.2) node {\footnotesize (2) };
		\draw (-4.2,-1.2) node { \footnotesize $\begin{cases} 
			{\mathbb{R}\ltimes E(1,1)} \\
			{\mathbb{R}\ltimes \widetilde{E}(2)}
			\end{cases}$};
		\draw[color=\ColorENegativa] (-2.7,-1)--(3.65,-1); 
		\filldraw [color=\ColorENegativa,fill=white] (-2.7,-1) circle (2pt);
		\filldraw [color=\ColorENegativa,fill=white] (2.4,-1) circle (2pt);
		\filldraw [color=\ColorENegativa,fill=white] (3.65,-1) circle (2pt); 
			
		\draw[color=\ColorEPositiva] (3.05,-1.4-\YSepDosMetricas-0.03)--(4.45,-1.4-\YSepDosMetricas-0.03);
		\filldraw [color=\ColorEPositiva,fill=white] (3.05,-1.4-\YSepDosMetricas-0.03) circle (2pt);
		\filldraw [color=\ColorEPositiva,fill=\ColorEPositiva] (4.45,-1.4-\YSepDosMetricas-0.03) circle (2pt); 
			
		\draw[color=\ColorEPositiva] 		(3.65,-1.4+1*\YSepDosMetricas)--(4.45,-1.4+1*\YSepDosMetricas);
		\filldraw [color=\ColorEPositiva,fill=white] (3.65,-1.4+1*\YSepDosMetricas) circle (2pt);
		\filldraw [color=\ColorEPositiva,fill=white] (4.45,-1.4+1*\YSepDosMetricas) circle (2pt);

		\draw (-5.45,-1.95) node {\footnotesize (3) };
		\draw (-4.2,-1.95) node {\footnotesize $\mathbb{R}\ltimes \widetilde{E}(2) \phantom{(1)}$};
		\draw[color=\ColorENegativa] (-2.7,-1.95)--(3.05,-1.95);
		\filldraw [color=\ColorENegativa,fill=white] (-2.7,-1.95) circle (2pt);
		\filldraw [color=\ColorENegativa,fill=white] (3.05,-1.95) circle (2pt);

		\draw (-5.45,-2.45) node {\footnotesize (4) };
		\draw (-4.2,-2.45) node {\footnotesize $\mathbb{R}\ltimes \widetilde{E}(2) \phantom{(1)}$};
		\draw[color=\ColorENegativa] (-.2,-2.45)--(3.05,-2.45);
		\filldraw [color=\ColorENegativa,fill=white] (-.2,-2.45) circle (2pt);
		\filldraw [color=\ColorENegativa,fill=white] (3.05,-2.45) circle (2pt);

		\draw (-5.45,-2.95) node {\footnotesize (5) };
		\draw (-4.2,-2.95) node {\footnotesize $\mathbb{R}\ltimes E(1,1)$\phantom{.}}; 
		
		\draw[densely dotted,color=\ColorENegativa] 	
			(1.1,-2.95-\YSepInfMetricas/4)--(3.05,-2.95-\YSepInfMetricas/4);
				\draw[densely dotted,color=\ColorENegativa] 	
			(1.1,-2.95-\YSepInfMetricas/8)--(3.05,-2.95-\YSepInfMetricas/8);
				\draw[densely dotted,color=\ColorENegativa] 	
			(1.1,-2.95-\YSepInfMetricas/3)--(3.05,-2.95-\YSepInfMetricas/3); 
		
		\draw[color=\ColorENegativa] (1.1,-2.95-\YSepInfMetricas/2)--(3.05,-2.95-\YSepInfMetricas/2);
		\filldraw [color=\ColorENegativa,fill=white] (1.1,-2.95-\YSepInfMetricas/2) circle (2pt);
		\filldraw [color=\ColorENegativa,fill=white] (2.4,-2.95-\YSepInfMetricas/2) circle (2pt);
		\filldraw [color=\ColorENegativa,fill=white] (3.05,-2.95-\YSepInfMetricas/2) circle (2pt);

	\end{tikzpicture}
\caption{Range of the parameter $t$ for non-symmetric homogeneous $\mathcal{F}_t$-critical metrics on $\mathbb{R}\ltimes E(1,1)$ and $\mathbb{R}\ltimes \widetilde{E}(2)$.}
\label{figura-E}	
	\end{center}
\end{figure}
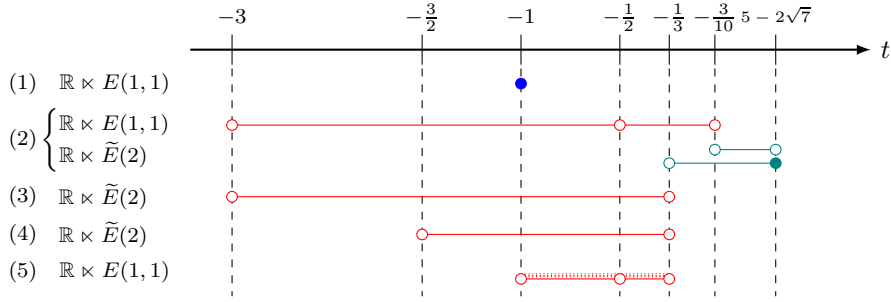
The metric in Family~(1), which is $\mathcal{F}_{-1}$-critical, is the only critical metric (modulo homotheties) with zero energy. Moreover, it is an algebraic Ricci soliton on $\mathbb{R}\ltimes E(1,1)$, which is isomorphically  homothetic  to an algebraic Ricci soliton (iii) in Section~\ref{ss:1-3-2} (see  Remark~\ref{re:xxx}). Left-invariant metrics in $\mathbb{R}\ltimes \widetilde{E}(2)$ corresponding to Family~(2) with $\lambda\in(0,1)$ have positive energy, whereas the energy is negative for all other families.
	The only non-symmetric Bach-flat metric  is the left-invariant metric on $\mathbb{R}\ltimes E(1,1)$ corresponding to Family~(2) for $t=-1/3$ (see \cite{CL-GM-GR-GR-VL} and Remark~\ref{re:Bach-llanas}).
	 
		The value of $t$ for critical metrics is a homothetic invariant. Other homothetic invariants can be built as quotients of second and third order scalar curvature invariants as long as the numerator and the denominator have the same order. The set of homothetic invariants given by $\{t,\|\rho\|^2\,\tau^{-2}, \| R\|^2\,\tau^{-2}, \|\nabla\rho\|^2\,\tau^{-3},\|\nabla R\|^2\tau^{-3}\}$ suffices to show that there are no homotheties between metrics in different classes in Theorem~\ref{th:E(1,1)}.
	Moreover, it follows from the restrictions on the parameters in each Family~(2)--(5) that different values of the parameters correspond to different homothety classes.

Furthermore, for each admissible value of $t\in(-3,5-2\sqrt{7}]$ there is a single $\mathcal{F}_t$-critical metric (up to homotheties) for each Family~(2)-(4) in Figure~\ref{figura-E},  with the exception of Family~(2), where there are two-distinct homothety classes of $\mathcal{F}_t$-critical metrics for $  t\in(-\frac{3}{10},5-2\sqrt{7})$. For each value of  $t\in(-1,-\frac{1}{3})\setminus\{-\frac{1}{2}\}$ there is an infinite number of non-homothetic $\mathcal{F}_t$-critical metrics in Family~(5).
\end{remark}

\begin{proof} 
	A left-invariant metric is $\mathcal{F}_t$-critical if and only if the symmetric $(0,2)$-tensor field 
	$
	\mathfrak{F}_t=
	-\Delta \rho 
	+\frac{1}{2} (\|\rho\|^2 +t\tau^2) g 
	-2 R[\rho] 
	-2t\tau\rho
	$
	vanishes. Moreover, the components $\mathfrak{F}_t(e_i,e_j)$ are polynomials on $t$ and on the structure constants  determining the metric~\eqref{eq:E(1,1), E(2)}. A direct calculation shows that $\mathfrak{F}_t(e_1,e_4)=\lambda_2 \mathcal{P}_{14}$ and $\mathfrak{F}_t(e_2,e_4)=\lambda_1 \mathcal{P}_{24}$, where $\mathcal{P}_{14}$ and $\mathcal{P}_{24}$ are polynomials in the variables $(\lambda_1,\lambda_2,A,b,C,D,t)$. Thus, we simplify the non-zero parameters $\lambda_2$ and $\lambda_1$ to define $\mathfrak{\overline F}_t(e_i,e_4)=\mathcal{P}_{i4}$ for $i=1,2$, and we let $\mathfrak{\overline F}_t(e_i,e_j)=\mathfrak{F}_t(e_i,e_j)$ in other cases. In what follows we consider the cases $C=0$ and $C\neq 0$ separately in order to solve the system 
	$\{\mathfrak{\overline F}_t(e_i,e_j)=0\}$.

	\subsection{\boldmath Case $C=0$\unboldmath}\label{se: E11 E2 C=0}
	Assuming $C=0$ we consider a homothety to make $\lambda_1=1$. Note that the isometry $e_4\mapsto -e_4$ interchanges $(\lambda_2,A,b,D)$ and $(\lambda_2,-A,-b,-D)$, so we can take $b\geq 0$ in all the subcases below without loss of generality.  We start by calculating
	\[
	\begin{array}{l}
	\mathfrak{\overline F}_t(e_1,e_4)=
	-\tfrac{1}{2} D \left(
	\left(  
	(\lambda_2 - 1)^2 (A^2 + 1) + 12 b^2 + D^2
	\right) t 
	\right.
	
	\\
	\noalign{\medskip}
	\phantom{\mathfrak{\overline F}_t(e_1,e_4)=
		-\tfrac{1}{2} D (}
	
	\left.
	+ \lambda_2 (3 \lambda_2 - 2) (A^2+1) 
	+ 3 b^2 + 3 D^2
	\right),
	\end{array}
	\]
	which leads to the cases $D=0$ and 
	$t=-\frac{\lambda_2 (3 \lambda_2 - 2) (A^2+1) 
		+ 3 b^2 + 3 D^2}
	{(\lambda_2 - 1)^2 (A^2 + 1) + 12 b^2 + D^2}
	$ with $D\neq 0$. We analyze these cases separately.

	\subsubsection{Case $D=0$}
	For $D=0$ we have
	\[
	\begin{array}{l}
	\mathfrak{\overline F}_t(e_3,e_4)=
	-\tfrac{1}{2}  (\lambda_2 - 1)^2 A 
	\left(
	\left(
	(A^2 + 1) (\lambda_2 - 1)^2 + 
	12 b^2
	\right) t 
	\right.
	
	\\
	\noalign{\medskip}
	\phantom{\mathfrak{\overline F}_t(e_3,e_4)=
		-\tfrac{1}{2}  (\lambda_2 - 1)^2 A (}
	
	\left.
	+ (3 \lambda_2^2 + 2 \lambda_2 + 3) (A^2 + 1) 
	+ 4 b^2
	\right).
	\end{array}
	\] 
	If $\lambda_2=1$, then the space is
	symmetric (see  Remark~\ref{re:E11 E2 sc}). We assume $\lambda_2\neq 1$ and analyze the following two possibilities:    
	\begin{itemize}
		\item[(i)]  $A=0$. The non-null components of  $\mathfrak{\overline F}_t$ reduce to
		\noindent
		\begin{equation}\label{eq: E(1,1) E(2)-1}
		\begin{array}{l}
		\mathfrak{\overline F}_t(e_1,e_1)=
		-\frac{1}{8}  \left(12 b^2 + (\lambda_2 - 
		1)^2\right) 
		\left(4 b^2 + (\lambda_2 - 1) (3 \lambda_2 + 5)\right) t 
		
		\\
		\noalign{\medskip}
		\phantom{\mathfrak{\overline F}_t(e_1,e_1)=}
		
		- 2 b^4 - 
		\frac{1}{8} (\lambda_2 - 1) \left(9 \lambda_2^3 + 5 \lambda_2^2 + 
		3 \lambda_2 + 15\right),
		
		\\
		\noalign{\medskip}
		
		\mathfrak{\overline F}_t(e_2,e_2)=
		-\frac{1}{8}  \left(12 b^2 + (\lambda_2 - 
		1)^2\right) \left(4 b^2 - (\lambda_2 - 1) (5 \lambda_2 + 3)\right) t 
		
		\\
		\noalign{\medskip}
		\phantom{\mathfrak{\overline F}_t(e_2,e_2)=}
		
		- 2 b^4 + 
		\frac{1}{8} (\lambda_2 - 1) \left(15 \lambda_2^3 + 3 \lambda_2^2 + 
		5 \lambda_2 + 9\right) ,
		
		\\
		\noalign{\medskip}
		
		\mathfrak{\overline F}_t(e_3,e_3)=
		- 3 \mathfrak{\overline F}(e_4,e_4)=
		\frac{3}{8} \left(48 b^4 - 8 b^2 (\lambda_2 - 1)^2 - (\lambda_2 - 1)^4\right) t 
		
		\\
		\noalign{\medskip}
		\phantom{\mathfrak{\overline F_t}(e_3,e_3)=}
		+  6 b^4 - \frac{3}{8} (\lambda_2 - 1)^2 \left(3 \lambda_2^2 + 2 \lambda_2 + 3\right) ,
		
		\end{array}
		\end{equation}
		and a direct calculation shows that
		\[
		\mathfrak{\overline F}_t(e_1,e_1)
		-\mathfrak{\overline F}_t(e_2,e_2) 
		=
		-(\lambda_2^2-1)
		\left(  \left(12 b^2 + 
		(\lambda_2-1)^2\right)t 
		+ 3 \lambda_2^2 - 2 \lambda_2+3 \right) .
		\]

		\smallskip
		
		If $\lambda_2=-1$ and $b\neq\pm1$ (the metric is Einstein if $b^2=1$), then using (\ref{eq: E(1,1) E(2)-1}) one easily gets that  $\mathfrak{\overline F}_t$ vanishes if and only if $(3 b^2 + 1) t + b^2 + 1=0$. Therefore, the corresponding left-invariant metric on $\mathbb{R}\ltimes E(1,1)$, given by
		\[
		[e_1,e_3]= e_2,
		\quad 
		[e_1,e_4]=b e_1, 
		\quad 
		[e_2,e_3]=e_1,  
		\quad
		[e_2,e_4]=b e_2, 
		\]
		is critical for the functional $\mathcal{F}_t$ determined by $t=-\frac{b^2+1}{3b^2+1}\in[-1,-\frac{1}{3})\setminus \{-\frac{1}{2}\}$. Moreover, we have  $\energy_t=-16b^2$  and hence the energy vanishes if and only if $b=0$. Also, these metrics are algebraic Ricci solitons if and only if $b=0$,  in which case $\operatorname{Ric}+2\operatorname{Id}$ is a derivation of the Lie algebra. These metrics correspond to Family (1) if $b=0$, and a subfamily of Family~(5) given by $A=0$ otherwise.

		\smallskip

		If
		$\left(12 b^2 + 
		(\lambda_2-1)^2\right)t 
		+ 3 \lambda_2^2 - 2 \lambda_2+3=0$ and $\lambda_2\neq\pm1$,
		then by (\ref{eq: E(1,1) E(2)-1})  the critical condition reduces to
		\[
		4 b^4 - (3 \lambda_2^2 - 2 \lambda_2 + 3) b^2 - (\lambda_2 - 
		1)^2 \lambda_2 = 0.
		\]
		 The corresponding left-invariant metric, given by
		\[
		[e_1,e_3]= -\lambda_2 e_2,
		\quad 
		[e_1,e_4]=b e_1, 
		\quad 
		[e_2,e_3]=e_1,  
		\quad
		[e_2,e_4]=b e_2, 
		\]
		is $\mathcal{F}_t$-critical for  $t=-\frac{3\lambda_2^2-2\lambda_2+3}{12 b^2+(\lambda_2-1)^2}\in(-3,5-2\sqrt{7}]\setminus\{-\frac{1}{2}\}$, in which case the energy is given by  $\energy_t=4\lambda_2(\lambda_2-1)^2\neq 0$. More specifically, if $\lambda_2<0$ then the metric occurs on  $\mathbb{R}\ltimes E(1,1)$ and $t\in(-3,-\frac{3}{10})\setminus\{-\frac{1}{2}\}$, while for $\lambda_2>0$   	the metric occurs on  $\mathbb{R}\ltimes \widetilde{E}(2)$ and
		$t\in(-\frac{1}{3}, 5-2\sqrt{7}]$.
		These metrics correspond to Family~(2).
	Now, we proceed depending on the sign of $\lambda_2$.
	If $\lambda_2<0$, then 	$(e_1,e_2,e_3,e_4)\mapsto\frac{1}{\lambda_2}(e_2,-e_1,e_3,-e_4)$ determines a homothety which interchanges the parameters $(\lambda_2,b)$ with $(\frac{1}{\lambda_2},-\frac{b}{\lambda_2})$. Moreover, the homothety
	$(e_1,e_2,e_3,e_4)\mapsto\frac{1}{\lambda_2}(e_2,-e_1,e_3,e_4)$ interchanges the parameters $(\lambda_2,b)$ with $(\frac{1}{\lambda_2},\frac{b}{\lambda_2})$ if $\lambda_2>0$. Hence one may restrict the parameter $\lambda_2$ to $(-1,1)\setminus\{0\}$.	
		
		\smallskip
		
		\item[(ii)]   $t=
			-\frac{(3 \lambda_2^2 + 2 \lambda_2 + 3) (A^2 + 1) + 4 b^2}
			{(A^2 + 1) (\lambda_2 - 1)^2 + 12 b^2}
			$, with $A\neq 0$ and $\lambda_2\neq 1$. The non-null components of $\mathfrak{\overline F}_t$ are given by
		\[
		\begin{array}{l}
		\mathfrak{\overline F}_t(e_1,e_1)=
		\phantom{-}
		
		(\lambda_2 + 1) (A^2 + 
		1)  \left( 2 \lambda_2 (\lambda_2 - 1) (A^2 + 1) + (3 \lambda_2 - 
		1) b^2 \right) ,
		
		\\
		\noalign{\medskip}
		
		\mathfrak{\overline F}_t(e_2,e_2) =
		- (\lambda_2 + 1)(A^2 + 1)  \left( 2 \lambda_2 (\lambda_2 - 1)  (A^2 + 1)  + (\lambda_2 - 
		3) b^2 \right) ,
		
		\\
		\noalign{\medskip}
		
		\mathfrak{\overline F}_t(e_3,e_3) =
		-3\mathfrak{\overline F}_t(e_4,e_4) =
		-3 (\lambda_2 + 1)^2 (A^2 + 1) b^2 .
		\end{array}
		\]
		Hence the critical condition is equivalent to $\lambda_2=-1$ and  the left-invariant metric, which occurs on  $\mathbb{R}\ltimes E(1,1)$, is given by
		\[
		[e_1,e_3]= e_2,
		\quad 
		[e_1,e_4]=b e_1+A e_2, 
		\quad 
		[e_2,e_3]=e_1,  
		\quad
		[e_2,e_4]=A e_1+b e_2 . 
		\]
		 The isometry $( e_1, e_2, e_3,  e_4)\mapsto(-e_1,e_2,-e_3,e_4)$ changes the parameters $(b,A)$ to $(b,-A)$, so we may assume without loss of generality that $A>0$. 
		
		Moreover, the metric is locally symmetric if and only if $b^2-A^2=1$. Thus,  
		$t=-\frac{A^2+b^2+1}{A^2+3b^2+1}\in[-1,-\frac{1}{3})\setminus \{-\frac{1}{2}\}$ and the energy is given by $\energy_t=-16b^2$. For $b\neq 0$ these metrics correspond to Family (5).  
	For $b=0$, consider the change of basis given by		
		$$
	\bar e_1 =\tfrac{e_1}{\sqrt{\mu}},\quad
	\bar e_2 = \tfrac{e_2}{\sqrt{\mu}},\quad
	\bar e_3 = \tfrac{1}{\mu}(e_4-Ae_3),
	\quad
	\bar e_4 = \tfrac{1}{\mu}(e_3+Ae_4),
	$$
	where $\mu=A^2+1$, to check that this family is isomorphically homothetic to Family~(1). Furthermore, in this case $\operatorname{Ric}+2\operatorname{Id}$ is a derivation of the Lie algebra and is the only instance in which these metrics are algebraic Ricci solitons.	
	\end{itemize}

	\subsubsection{Case $t=-\frac{\lambda_2 (3 \lambda_2 - 2) (A^2+1)+ 3 b^2 + 3 D^2}
		{(\lambda_2 - 1)^2 (A^2 + 1) + 12 b^2 + D^2}
		$, $D\neq 0$}
	For this value of $t$ we have $\mathfrak{\overline F}_t(e_1,e_3)=-\frac{3}{2} (\lambda_2 + 1) A b^2 D 	
	$ so we have the following three possibilities:
	\begin{itemize}
		\item[(i)] $\lambda_2=-1$. In this case we have:
		\[
		\begin{array}{l}
		\mathfrak{\overline F}_t(e_1,e_1)=
		-\frac{1}{8} \left(
		4 (A^2 - b^2 + 1)^2 + (19 (A^2 + 1) - 11 b^2) D^2
		\right) ,
		
		\\
		\noalign{\medskip}
		
		\mathfrak{\overline F}_t(e_2,e_2)=
		-\frac{1}{8} \left(
		4 (A^2 - b^2 + 1)^2 - (5 A^2 - 13 b^2 + 5) D^2
		\right) ,
		\end{array}
		\]
		from where 
		\[
		\mathfrak{\overline F}_t(e_1,e_1)+
		\mathfrak{\overline F}_t(e_2,e_2)
		=
		-(A^2 - b^2 + 1)^2 - \tfrac{1}{4} (7 A^2 + b^2 + 7) D^2,
		\]
		which is strictly negative since $D\neq 0$. Hence there are no critical metrics in this case.

		\smallskip

		\item[(ii)]  $b=0$. We compute
		\[
		\small
		\begin{array}{l}
		\mathfrak{\overline F}_t(e_1,e_1)=
		\frac{1}{8}( A^2+1) \left( 
		(\lambda_2 - 1) (4 \lambda_2^2 - 13 \lambda_2 - 
		15) (A^2 + 1) + (4 \lambda_2 - 15) D^2 
		\right) ,
		
		\\
		\noalign{\medskip}
		
		\mathfrak{\overline F}_t(e_2,e_2)=
		\frac{1}{8} \left(
		A^2 +  1) ( (\lambda_2 - 1) (4 \lambda_2^2 + 11 \lambda_2 + 9) (A^2 + 
		1) + (4 \lambda_2 + 9) D^2 
		\right) .
		\end{array}
		\]
		Hence,
		\[
		(4\lambda_2+9)\mathfrak{\overline F}_t(e_1,e_1)-
		(4\lambda_2-15)\mathfrak{\overline F}_t(e_2,e_2)
		=
		-6 \lambda_2 (\lambda_2 - 1) (A^2 + 1)^2,
		\]
		which implies $\lambda_2=1$.  In this case, $\mathfrak{\overline F}_t(e_1,e_1)=
		-\frac{11}{8} ( A^2+1) D^2\neq 0$, so there are no critical metrics.

		\smallskip
		
		\item[(iii)]  $A=0$. We determine  
		the non-zero components of $\mathfrak{\overline F}_t$, which are  given by
		\[
		\begin{array}{l}
		\mathfrak{\overline F}_t(e_1,e_1)=
		-\frac{1}{8} \left(
		4 b^4 - (11 D^2 + 21 \lambda_2^2 - 2 \lambda_2 - 
		15) b^2 - (4 \lambda_2 - 15) D^2
		\right.
		
		\\
		\noalign{\medskip}
		\phantom{\mathfrak{\overline F}_t(e_1,e_1)=-\frac{1}{8}(}
		
		\left.
		- (\lambda_2 - 
		1) (4 \lambda_2^2 - 13 \lambda_2 - 15)
		\right) ,
		
		\\
		\noalign{\medskip}
		
		\mathfrak{\overline F}_t(e_2,e_2) =
		-\frac{1}{8} \left(
		4 b^4 + ( 13 D^2 +3 \lambda_2^2 + 2 \lambda_2- 9) b^2 - (4 \lambda_2 + 9) D^2
		
		\right.
		\\
		\noalign{\medskip}
		\phantom{\mathfrak{\overline F}_t(e_2,e_2) =
			-\frac{1}{8}(}
		\left.
		
		- (\lambda_2 - 
		1) (4 \lambda_2^2 + 11 \lambda_2 + 9)
		\right) ,

		\\
		\noalign{\medskip}
		
		\mathfrak{\overline F}_t(e_3,e_3) =
		-3\mathfrak{\overline F}_t(e_4,e_4)
		=
		\frac{3}{8} \left(
		4 b^4 + (D^2 - 9 \lambda_2^2 + 2 \lambda_2 + 3) b^2 
		\right.
		
		\\
		\noalign{\medskip}
		\phantom{\mathfrak{\overline F}_t(e_3,e_3)=-\frac{3}{8}(}
		
		\left.
		- (4 \lambda_2 - 3) D^2 
		- (\lambda_2 -  1)^2 (4 \lambda_2 + 3)
		\right) ,
		
		\end{array}
		\]
		and therefore
		\[
		\mathfrak{\overline F}_t(e_1,e_1)-
		\mathfrak{\overline F}_t(e_2,e_2)=
		3 ( b^2-1) (D^2 + \lambda_2^2-1) .
		\]

		\smallskip
		
		If $b^2=1$ then   $\mathfrak{\overline F}_t=0$ reduces to
		$
		( \lambda_2-1) \left(D^2 + ( \lambda_2+1)^2\right)=0
		$, so $\lambda_2=1$. Recall that we can take $b\geq 0$. Thus, for $b=1$, we have the left-invariant metric on  $\mathbb{R}\ltimes \widetilde{E}(2)$ given by
		\[
		[e_1,e_3]= - e_2,
		\,\,\, 
		[e_1,e_4]=e_1, 
		\,\,\, 
		[e_2,e_3]=e_1,  
		\,\,\,
		[e_2,e_4]= e_2,
		\,\,\,
		[e_3,e_4]=D e_2, 
		\]
		which is critical for $t=-\frac{3D^2+4}{D^2+12}\in(-3,-\frac{1}{3})$. The isometry $(e_1,e_2,e_3,e_4)\mapsto(-e_1,-e_2,e_3,e_4)$ interchanges $D$ and  $-D$, and thus one may assume $D>0$. These metrics correspond to Family~(3). Their energy, which is given by $\energy_t=-5D^2$, is strictly negative.

		\smallskip

		If  $D^2+\lambda_2^2=1$, with   $b^2\neq 1$, then the tensor field $\mathfrak{\overline F}_t$ above vanishes if and only if 
		\begin{equation}\label{eq: E11 E2 caso (3)}
		2 b^4 - (5 \lambda_2^2 - \lambda_2 - 2) b^2 + \lambda_2(\lambda_2 - 1) =0  ,
		\end{equation}
		so we get the left-invariant metric
		\[
		[e_1,e_3]= - \lambda_2 e_2,
		\,\,
		[e_1,e_4]=b e_1, 
		\,\, 
		[e_2,e_3]=e_1,  
		\,\,
		[e_2,e_4]= b e_2,
		\,\,
		[e_3,e_4]=D e_2, 
		\]
		which is critical for $t=
		-\frac{3b^2-2\lambda_2+3}{12b^2-2\lambda_2+2}\in(-\frac{3}{2},-\frac{1}{3})$.
		Note that,  since $D^2+\lambda_2^2=1$ and $D\neq 0$, we have $\lambda_2\in(-1,1)$, but from (\ref{eq: E11 E2 caso (3)}) it follows that $\lambda_2>0$, so $\lambda_2\in(0,1)$ and the metric above  occurs on $\mathbb{R}\ltimes \widetilde{E}(2)$.
		The isometry $(e_1,e_2,e_3,e_4)\mapsto (-e_1,e_2,-e_3,e_4)$ interchanges the parameters $(\lambda_2,b,D)$ with  $(\lambda_2,b,-D)$, thus showing that one may restrict to $D>0$.
		Furthermore, the energy is given by $\energy_t=(\lambda_2-1)(3  (\lambda_2 + 3) b^2 - 2 \lambda_2)$, which  does not vanish under the conditions above. 
		Indeed, since $\lambda_2\in(0,1)$, the energy would vanish if and only if $b^2=\frac{2\lambda_2}{3(\lambda_2+3)}$. Now, using this expression for $b^2$ in 
		(\ref{eq: E11 E2 caso (3)}), we get that
		$21 \lambda_2^3 + 39 \lambda_2^2 - 65 \lambda_2 + 45=0$, which is not possible for $\lambda_2\in(0,1)$.
		These metrics correspond to Family~(4).
	\end{itemize}

	\subsection{\boldmath  Case $ C\neq 0$\unboldmath}
	First of all note that, by Remark~\ref{re:E11 E2 isometry}, we may assume $D\neq 0$.
	Moreover, we choose a representative in the homothetic class with $C=1$. We use  Gröbner bases to show that no  other critical metrics than those obtained in the previous section may exist. We fix  the polynomial ring $
	\mathbb{R}[t, \lambda_1, \lambda_2,A,b,C,D]$ with   the lexicographic order and   denote by  
	$\mathcal{I}$  the ideal spanned  by the polynomials $\{\mathfrak{\overline F}_t(e_i,e_j)\} \cup \{  C-1\}$. A direct computation of
	a  Gröbner basis of this ideal provides $116$ polynomials, which include the following:
	\[
	\mathbf{g}=Ab^3(D^2-1) (16A^2+9)(D^2+1)^3(A^2D^4+6A^2D^2+A^2+9D^2) .
	\]  
	Hence,  it follows that $b=0$, $A=0$ or $D^2=1$.

	\subsubsection{Case $b=0$}
	We compute  a Gröbner basis of the ideal $\mathcal{I}_{1}=\langle\mathcal{I}\cup \{b\} \rangle$, obtaining $18$ polynomials. The polynomial    
		$
		\mathbf{g}_{1}=\lambda_2^2 D (A^2+1)(2(A^2+1)\lambda_2^2+7)
		$
		belongs to the basis so, since $\lambda_2 D\neq 0$,  it follows that   no critical metric  may exist in this case.

	\subsubsection{Case $A=0$,   $b\neq 0$}
	In this case, $\mathfrak{\overline F}_t(e_3,e_4)
	= -\frac{3}{2} b D(\lambda_1-\lambda_2)$, so necessarily $\lambda_2=\lambda_1$ and the left-invariant metric is given by
	\[
	[e_1,e_3]= - \lambda_1 e_2,
	\,\,
	[e_1,e_4]=b e_1, 
	\,\, 
	[e_2,e_3]=\lambda_1 e_1,  
	\,\,
	[e_2,e_4]= b e_2,
	\,\,
	[e_3,e_4]=e_1 + D e_2 . 
	\]
	Considering the orthonormal basis 
	\[
	\bar e_1 = \tfrac{1}{\sqrt{D^2+1}}(D e_1-e_2), 
	\quad
	\bar e_2 = \tfrac{1}{\sqrt{D^2+1}}(e_1+D e_2), 
	\quad
	\bar e_3 = e_3, 
	\quad
	\bar e_4 =  e_4,
	\]
	a direct calculation shows that the Lie brackets are determined by
	\[
	[\bar e_1,\bar e_3]=-\lambda_1\bar e_2, 
	\,
	[\bar e_1,\bar e_4]= b  \bar e_1,
	\,
	[\bar e_2,\bar e_3]=\lambda_1\bar e_1,
	\,
	[\bar e_2,\bar e_4]=b  \bar e_2, 
	\,
	[\bar e_3,\bar e_4]=\sqrt{D^2+1}\, \bar e_2,
	\]
	and therefore  this case is included in Section~\ref{se: E11 E2 C=0}.

	\subsubsection{Case $D^2=1$,  $b\neq0$, $A\neq 0$} 
	We compute  a Gröbner basis of  the ideal  $\mathcal{I}_{3}=\langle \mathcal{I}\cup \{D^2-1\} \rangle$, consisting of $19$ polynomials,   and we  get that   
	\[
	\mathbf{g}_{3}=Ab^3(256A^4b^4+416 A^4b^2+288 A^2b^2+144 A^4+225A^2+81)
	\] 
	belongs to the basis.  Hence,  since $A b\neq 0$, there are no critical metrics in this case. Notice that all the homotheties mentioned above preserve the Lie algebra structure, therefore the classification is given modulo isomorphic homotheties.
\end{proof}

\section{Left-invariant $\mathcal{F}_t$-critical metrics on $\mathbb{R}\ltimes \mathcal{H}^3$} \label{se:H}

Let $\mathfrak{g}=\mathbb{R}\ltimes\mathfrak{h}^3$ be  a semi-direct extension of the Heisenberg algebra $\mathfrak{h}^3$. Let $\langle \cdot, \cdot \rangle$ be an inner product on $\mathfrak{g}$ and $\langle \cdot, \cdot \rangle_3$ its restriction to $\mathfrak{h}^3$. 
Then, there exists  an orthonormal basis $\{\mathbf{v}_1, \mathbf{v}_2, \mathbf{v}_3\}$ of $\mathfrak{h}^3$ such that (see \cite{Milnor})
\begin{equation}\label{eq:rvl 1}
[\mathbf{v}_3,\mathbf{v}_2]=0,
\quad 
[\mathbf{v}_3,\mathbf{v}_1]=0, 
\quad 
[\mathbf{v}_1,\mathbf{v}_2]= \lambda_3 \mathbf{v}_3,
\end{equation}
where $\lambda_3\neq 0$ is a real number. The algebra of all derivations of $\mathfrak{h}^3$ is given with respect to the basis $\{ \mathbf{v}_1, \mathbf{v}_2, \mathbf{v}_3\}$ by
\[ 
\operatorname{der}(\mathfrak{h}^3)=\left \{  \left( \begin{array}{ccc}
\alpha_{11} & \alpha_{12} & 0 
\\
\alpha_{21} & \alpha_{22} & 0 
\\
\hat h & \hat f &\alpha_{11}+\alpha_{22}
\end{array} \right) ;\, \alpha_{ij},  \hat f, \hat h \in \mathbb{R} \right \}.
\]
We rotate the basis elements $\{ \mathbf{v}_1,\mathbf{v}_2\}$ so that the matrix $A=(\alpha_{ij})$ decomposes as the sum of a diagonal matrix and a skew-symmetric matrix. 
Hence 
\[ 
\operatorname{der}(\mathfrak{h}^3)=\left \{  \left( \begin{array}{ccc}
\tilde a & \tilde c & 0 
\\
-\tilde c & \tilde d & 0 
\\
\tilde h  & \tilde f  & \tilde a+\tilde d \end{array} \right) ;\, \tilde a,  \tilde c, \tilde d, \tilde f, \tilde h \in \mathbb{R} \right \}.
\]
Let $\{\mathbf{v}_1, \mathbf{v}_2, \mathbf{v}_3, \mathbf{v}_4\}$ be a basis of  $\mathfrak{g}=\mathbb{R}\mathbf{v}_4\oplus \operatorname{span}\{\mathbf{v}_1, \mathbf{v}_2, \mathbf{v}_3\}$.
Since $\mathbb{R}\mathbf{v}_4$ is not necessarily orthogonal to $\mathfrak{h}^3$, we set $\tilde k_i=\langle \mathbf{v}_i, \mathbf{v}_4 \rangle$, for $i = 1,2,3$. Let $\bar e_4= \mathbf{v}_4-\sum_i \tilde k_i\mathbf{v}_i$ and normalize it to get an orthonormal basis $\{ e_1,\dots,e_4\}$ of $\mathfrak{g}= \mathbb{R} \oplus  \mathfrak{h}^3 $ where $e_i=\mathbf{v}_i$ for $i=1,2,3$. 
We simplify notation using $a=-\tilde a\, \|e_4\|^{-1}$, $c=-\tilde c\,\|e_4\|^{-1}$, $d=-\tilde d\,\|e_4\|^{-1}$, $f=-\tilde f\|e_4\|^{-1}$, $h=-\tilde h\|e_4\|^{-1}$, $k_i=-\tilde k_i\|e_4\|^{-1}$  for $i=1,2,3$,
	and we set $F=f-k_1\gamma$ and $H=h+k_2\gamma$. Thus the Lie brackets are written as follows
	\begin{equation}\label{eq:corH3}
	\begin{array}{ll}
	[e_1, e_2]= \gamma e_3, 
	&
	[e_1,e_4] =   a e_1- c e_2+ H e_3,
	
	\\\noalign{\medskip} 
	
	[e_3,e_4]=( a+d)e_3,
	&
	[e_2,e_4]= c e_1+ d e_2+F e_3.
	\end{array}
	\end{equation}

\begin{remark}\label{re:einstein-H}\rm
The scalar curvature of left-invariant metrics on $\mathbb{R}\ltimes \mathcal{H}^3$, which is given by $\tau=-\frac{1}{2} \left(4 \left(3 a^2+3 d^2+5 a d\right)+F^2+H^2+\gamma^2\right)$, is strictly negative. Moreover, these metrics are Einstein if and only if 
	$$
	[e_1,e_2]=\gamma e_3, 
	\quad 
	[e_1,e_4]=\pm\tfrac{1}{2}\gamma e_1-c e_2,
	\quad 
	[e_2,e_4]=c e_1\pm\tfrac{1}{2}\gamma e_2,
	\quad 
	[e_3,e_4]=\pm\gamma e_3,
	$$
	in which case they correspond to the complex hyperbolic space, which is the only symmetric left-invariant metric on  $\mathbb{R} \ltimes \mathcal{H}^3$.
\end{remark}

\begin{remark}\label{re:4-2}
	\rm
	Let $\langle\cdot,\cdot\rangle$ be a left-invariant metric on $\mathbb{R}\ltimes \mathcal{H}^3$ as in \eqref{eq:corH3} with  parameters $(\gamma,a,c,d,H,F)$ determining the structure constants. The isometry $(e_1, e_2, e_3, e_4)\mapsto(-e_2, e_1, e_3, e_4)$ transforms this set of parameters into $(\gamma,d,c,a,-F,H)$. Hence any left-invariant metric \eqref{eq:corH3} with  $F=0$ is isometric to a left-invariant metric with $H=0$.
\end{remark}

\begin{theorem}\label{th:heisenberg}
	A non-symmetric left-invariant metric on $\mathbb{R}\ltimes \mathcal{H}^3$ is critical for a quadratic curvature functional $\mathcal{F}_t$ if and only if one of the following holds: 
	\begin{itemize}
		\item[(a)] The functional has zero energy and the metric is homothetic to one of the following:
		\begin{enumerate}
			\item The direct product $\mathbb{R}\times \mathcal{H}^3$ with Lie bracket $[e_1,e_2]=e_3$.
			In this case, $t=-3$.
			
			\smallskip
			\item 
			$[e_1,e_2]=e_3$, 
			$[e_1,e_4]=a e_1$, 
			$[e_2,e_4]=d e_2$  and 
			$[e_3,e_4]=(a+d)e_3$, 
			\\
		%	where $|a|<d$ and $4(a^2+d^2+ad)-3=0$, or $d=\frac{\sqrt{3}}{2}=-a$. 
		with $a\in [-\frac{\sqrt{3}}{2},\frac{1}{2} )$. For a fixed $a$, the parameter $d$ is given by the only positive solution of $4(a^2+d^2+ad)-3=0$.
			Moreover  $t=-\frac{3}{2(4ad+5)}$.
			
			\smallskip
			\item 
			$[e_1,e_2]=e_3$ and 
			$[e_2,e_4]= - e_1$. 
			In this case, $t=-\frac{3}{2}$.
		\end{enumerate}
		
		\medskip
		\item[(b)] The energy of the functional is non-zero and the metric is homothetic to a left-invariant metric determined by one of the following:
		\begin{enumerate}
			\item[(4)]
			$[e_1,e_2]=e_3$, 
			$[e_1,e_4]=ae_1$, 
			$[e_2,e_4]=a e_2$, 
			$[e_3,e_4]=2ae_3$, 
			\smallskip
			\\
			with $a\in\left(0,\frac{1}{2}\right)\cup\left(\frac{1}{2},+\infty\right)$.
			Moreover, $t=-\frac{3(4a^2+1)}{44a^2+1}$ and $\energy_t=-36a^2$.

			\smallskip
			\item[(5)]
			$[e_1,e_2]=e_3$,
			$[e_1,e_4]=ae_1-ce_2$,
			$[e_2,e_4]=c e_1+d e_2$ and 
			$[e_3,e_4]=(a+d)e_3$,
with $c\in (0,\frac{1}{2} )\cup\left(\sqrt{\frac{7}{8}},+\infty\right)$. For a fixed $c$, the parameters $a$ and $d$ are the only negative and positive solutions, respectively, of $3 a^2+4c^2+3 d^2+10 a d=0$ and $32 c^4 + (32 a d + 8) c^2 - 28 a d - 9=0$ satisfying $|a|<d$. 
			In this case, $t=-\frac{3(32c^4-7)}{4(8c^4+40c^2-13)}$ and    $\energy_t=-\frac{72c^2(4c^2-1)}{8c^2-7}$.
			\smallskip
			\item[(6)]
			$[e_1,e_2]=e_3$, \,\,
			$[e_1,e_4]=ae_1+He_3$, \,\,
			$[e_2,e_4]=de_2$ \,\, and \,\,
			$[e_3,e_4]=(a+d)e_3$,
with $H\in\left(0,\sqrt{\frac{3}{7}}\right)$. For a fixed $H$, the parameter $d$ is the only positive solution of 
$$
4 d^6 + 5(3 H^2 - 1) d^4 + (18 H^4 + 6 H^2 + 13) d^2 + (H^2 +  1)^2 (7 H^2 - 3)=0
$$
and the parameter $a$ is the positive value given by $a=-\frac{ ( 2 d^2+2 H^2 - 3) d }{2 ( d^2+H^2 + 1) }$.
In this case, 
$t=-\frac{ 4 a^2+3 d^2 + 2 a d+3 H^2+3}{ 12 a^2 + 12 d^2 + 20 a d+H^2 + 1}$   
and the energy is given by 
$$
\begin{array}{l}
\energy_t =\tfrac{1}{4} \left\{24 a^3 d+2 ad(22 d^2\!-\! 19H^2\!-\!31)+3d^2(4d^2\!-\!7H^2\!-\!13)\right.\\
\phantom{\energy_t =\tfrac{1}{4} \left\{ 24 a^3 d+2 ad(22 d^2\!-\! 19H^2\!-\!31)\right. \!}
\left. +4a^2(13d^2\!-\!6H^2\!-\!10)	\right\}.
\end{array}
$$
\end{enumerate}
\end{itemize}
Moreover, metrics in Families~(1)--(3) are algebraic Ricci solitons, while metrics corresponding to Families (4)--(6) are not.	
\end{theorem}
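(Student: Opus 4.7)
The approach parallels the proof of Theorem~\ref{th:E(1,1)}. A left-invariant metric \eqref{eq:corH3} is $\mathcal{F}_t$-critical precisely when the symmetric $(0,2)$-tensor
$$
\mathfrak{F}_t = -\Delta\rho + \tfrac{1}{2}(\|\rho\|^2 + t\tau^2)g - 2R[\rho] - 2t\tau\rho
$$
vanishes. Since $\gamma\neq 0$ is forced by the bracket $[e_1,e_2]=\gamma e_3$, I would first normalize $\gamma=1$ by homothety, and by Remark~\ref{re:4-2} assume $F=0$ at the cost of an isomorphic isometry (the case $H=0$, $F\neq 0$ being recovered by the same involution). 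The components $\mathfrak{F}_t(e_i,e_j)$ are then polynomials in $(t,a,c,d,H)$, and the Einstein solutions listed in Remark~\ref{re:einstein-H} are discarded as these are the only symmetric ones.

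I would proceed by a case split based on natural factorizations of the off-diagonal components of $\mathfrak{F}_t$. In the branch $H=0$, the equation $\mathfrak{F}_t(e_3,e_4)=0$ factors into the subcases $c=0$ and $c\neq 0$. For $c=0$, the critical condition reduces to the relation $4(a^2+ad+d^2)=3$ together with $t=-3/(2(4ad+5))$, recovering the solvsoliton Family~(2), with the degenerations $a=d$ and $a=d=0$ producing Families~(4) and~(1) respectively. For $c\neq 0$, an explicit reduction yields the two polynomial constraints $3a^2+4c^2+3d^2+10ad=0$ and $32c^4+(32ad+8)c^2-28ad-9=0$ defining Family~(5), subject to $|a|<d$. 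In the branch $H\neq 0$, elimination of $c$ and $a$ produces the sextic in $d$ and the rational expression for $a$ that define Family~(6), with Family~(3) appearing as the degeneration $a=c=d=0$ (which, via Remark~\ref{re:xx}, corresponds to the product-type metric $[e_2,e_4]=-e_1$ recovered through Remark~\ref{re:4-2}). For each family I would then read off $t$ (either prescribed by the equations or computed from the zero-energy relation $t=-\|\rho\|^2\tau^{-2}$), the energy $\energy_t=\|\rho\|^2+t\tau^2$, and the admissible ranges of the structure constants.

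To exclude extraneous solutions I would employ Gr\"obner bases, as in the proof of Theorem~\ref{th:E(1,1)}. Working in the polynomial ring $\mathbb{R}[t,a,c,d,H]$ with lexicographic order, the ideal spanned by $\{\mathfrak{F}_t(e_i,e_j)\}$ together with auxiliary generators encoding each generic subcase (for instance $cH-1$ when both $c$ and $H$ are nonzero) should admit a basis containing an element which factors as a product of a polynomial in the parameters and a polynomial in $t$, thereby forcing any further solution into one of the branches already analyzed. Finally, the algebraic Ricci soliton claim follows from the identifications in Remark~\ref{re:xx} and the list in Section~\ref{ss:1-3-2}: Families~(1), (3) and~(2) are isomorphically homothetic to the solvsolitons (i), (ii) and~(iv) respectively, while for Families~(4)--(6) it suffices to compare the homothetic invariants $\{t,\|R\|^2\tau^{-2},\|\nabla\rho\|^2\tau^{-3}\}$ with those of Lauret's list to rule out any isomorphic homothety to an algebraic soliton. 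The main technical obstacle is expected to be the Gr\"obner basis computation in the branch $H\neq 0$, $c\neq 0$, where the basis is likely to be large and requires careful inspection; as in the rest of the paper, it would be carried out with {\sc Singular} and cross-checked with {\sc Mathematica}.
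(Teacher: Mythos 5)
Your overall strategy (vanishing of $\mathfrak{F}_t$, normalization $\gamma=1$, case split on the off-diagonal components, Gr\"obner bases to exclude extraneous solutions) matches the paper's, and your treatment of the branches $F=H=0$ and $F=0$, $H\neq 0$ is essentially the argument the paper carries out. However, there is a genuine gap at the very first reduction: you invoke Remark~\ref{re:4-2} to ``assume $F=0$,'' but that remark only says that the isometry $(e_1,e_2,e_3,e_4)\mapsto(-e_2,e_1,e_3,e_4)$ sends the parameters $(\gamma,a,c,d,H,F)$ to $(\gamma,d,c,a,-F,H)$, i.e.\ it \emph{swaps} the roles of $F$ and $H$. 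It therefore lets you identify the case $F=0$ with the case $H=0$, but it cannot annihilate one of the two constants when both are nonzero. The branch $FH\neq 0$ is a separate case that your proposal never reaches, and it is precisely the hardest part of the proof: after extracting $t$ from $\mathfrak{F}_t(e_2,e_4)=0$ one must run Gr\"obner basis computations in the subcases $d=0$ and $d\neq 0$, and in the latter a basis element factors as $cH(a-1)^2(a+c^2)\mathbf{p}_2(2a^2+c^2+5a+2)(3a^2+4c^2+10a+3)$, forcing six further subcases, two of which ($a+c^2=0$ and $d=0$) produce metrics that must be recognized, via explicit changes of basis, as isomorphically homothetic to Families~(1) and~(3). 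Without this branch the classification is incomplete, since a priori there could be additional critical metrics with $FH\neq 0$.

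A secondary, more minor inaccuracy: in the branch $F=0$, $H\neq 0$ with $c=0$, the degenerate subcase forces $a=d=0$ and yields $[e_1,e_2]=e_3$, $[e_1,e_4]=He_3$, which is isomorphically homothetic to the product metric of Family~(1), not to Family~(3) as you state; Family~(3) actually arises in the branch $F=H=0$ from the subcase $d=-a$, $c^2=a^2=1/4$ (and reappears inside the $FH\neq0$ analysis). These attributions matter for the final claim about which families are algebraic Ricci solitons, so they should be corrected along with the missing $FH\neq 0$ case.
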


\begin{remark}\rm\label{re:H3-grafico}
	The range of the parameter $t$ for each family in Theorem~\ref{th:heisenberg} is indicated in Figure~\ref{figura-H} below. The energy vanishes ($\energy_t=0$) in Families~(1)--(3) and is strictly negative ($\energy_t<0$)  in Families~(4)--(6).
	\begin{figure}[h]
	\begin{center}
		\begin{tikzpicture}
		
		% Linea eje t
		%\draw[thick] (-5.7,0) -- (-4.9-0.25,0); 
		\draw[thick,-latex]   (-4.9-0.15,0) -- (5.7,0) node[right] {$t$}; 
		%\draw[thick,-latex]   (-5.7,0) -- (5.7,0) %node[right] {$t$};  
		\foreach \i/\n in 
		{
		{-4.5}/$-3$, 
		{-2.35}/$-\frac{3}{2}$, 
		{-0.95}/$-\frac{3}{4}$, 
		{0.05}/$-\frac{1}{2}$, 
		{0.75}/$-\frac{7}{15}$,
		{1.45}/$-\frac{7}{16}$, 
		{2.15}/$-\frac{21}{52}$,
		{2.95}/$-\frac{1}{3}$, 
		{3.75}/$-\frac{3}{11}$,
		{4.45}/$-\frac{1}{4}$
		}
		{\draw (\i,-.2)--(\i,.2);}

		\foreach \i/\n in 
		{
		{-4.5}/$-3$
		}
		{\draw  (\i,.2) node[above] {\footnotesize \n};}

		\foreach \i/\n in 
		{
		{-2.35}/$-\frac{3}{2}$, 
		{-0.95}/$-\frac{3}{4}$, 
		{0.05}/$-\frac{1}{2}$, 
		{0.75}/$-\frac{7}{15}$,
		{1.45}/$-\frac{7}{16}$, 
		{2.15}/$-\frac{21}{52}$,
		{2.95}/$-\frac{1}{3}$, 
		{3.75}/$-\frac{3}{11}$,
		{4.45}/$-\frac{1}{4}$
		}
		{\draw  (\i,.12) node[above] {\footnotesize \n};}
		
		\foreach \i/\n in 
		{
		{-4.5}/$-3$, 
		{-2.35}/$-\frac{3}{2}$, 
		{-0.95}/$-\frac{3}{4}$, 
		{0.05}/$-\frac{1}{2}$, 
		{0.75}/$-\frac{7}{15}$,
		{1.45}/$-\frac{7}{16}$, 
		{2.15}/$-\frac{21}{52}$,
		% {2.95}/$-\frac{1}{3}$, 
		{3.75}/$-\frac{3}{11}$,
		{4.45}/$-\frac{1}{4}$
		}{
			\draw[dashed] (\i,-.25)--(\i,-3.2);
		}

		\draw (-5.45,-0.45) node {\footnotesize (1) };
		\filldraw [color=\ColorECeroARS,fill=\ColorECeroARS] (-4.5,-0.45) circle (2pt);

		\draw (-5.45,-0.95) node {\footnotesize (2) };
		\draw[color=\ColorECeroARS] (-0.95,-0.95)--(4.45,-0.95);
		\filldraw [color=\ColorECeroARS,fill=\ColorECeroARS] (-0.95,-0.95) circle (2pt);
		\filldraw [color=\ColorECeroARS,fill=white] (4.45,-0.95) circle (2pt);

		\draw (-5.45,-1.45) node {\footnotesize (3) };
		\filldraw [color=\ColorECeroARS,fill=\ColorECeroARS] (-2.35,-1.45) circle (2pt);

		\draw (-5.45,-1.95) node {\footnotesize (4) };
		\draw[color=\ColorENegativa] (-4.5,-1.95)--(3.75,-1.95);
		\filldraw [color=\ColorENegativa,fill=white] (-4.5,-1.95) circle (2pt);
		\filldraw [color=\ColorENegativa,fill=white] (0.05,-1.95) circle (2pt);
		\filldraw [color=\ColorENegativa,fill=white] (3.75,-1.95) circle (2pt);

		\draw (-5.45,-2.45) node {\footnotesize (5) };
		\draw[color=\ColorENegativa] (-4.5,-2.45-\YSepDosMetricas)--(2.15,-2.45-\YSepDosMetricas);
		\filldraw [color=\ColorENegativa,fill=white] (-4.5,-2.45-\YSepDosMetricas) circle (2pt);
		\filldraw [color=\ColorENegativa,fill=white] (2.15,-2.45-\YSepDosMetricas) circle (2pt); 
		
		\draw[color=\ColorENegativa] (-2.35,-2.45+0.7*\YSepDosMetricas)--(0.75,-2.45+0.7*\YSepDosMetricas);
		\filldraw [color=\ColorENegativa,fill=white] (-2.35,-2.45+0.7*\YSepDosMetricas) circle (2pt); 
		\filldraw [color=\ColorENegativa,fill=white] (0.75,-2.45+0.7*\YSepDosMetricas) circle (2pt);

		\draw (-5.45,-2.95) node {\footnotesize (6) };
		\draw[color=\ColorENegativa] (-4.5,-2.95)--(1.45,-2.95);
		\filldraw [color=\ColorENegativa,fill=white] (-4.5,-2.95) circle (2pt);
		\filldraw [color=\ColorENegativa,fill=white] (1.45,-2.95) circle (2pt); 
		\end{tikzpicture}
\caption{Range of the parameter $t$ for non-symmetric homogeneous $\mathcal{F}_t$-critical metrics on $\mathbb{R}\ltimes \mathcal{H}^3$.}
\label{figura-H}		
\end{center}
\end{figure}
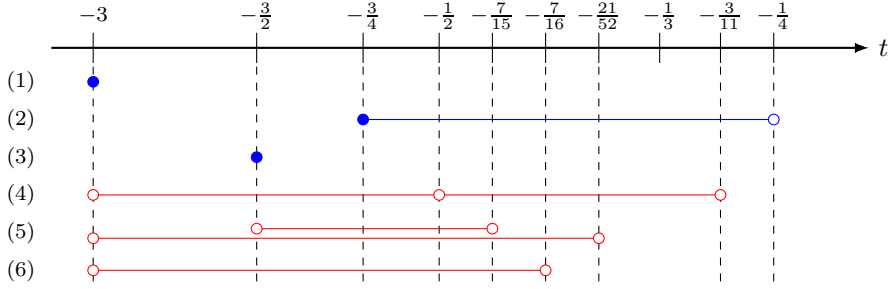
	
	\smallskip
	
 \noindent
For the special choice $a=1$ in Family~(4) the metric is self-dual and, thus, Bach-flat (equivalently $\mathcal{F}_{-1/3}$-critical)~\cite{Salamon}.
For the special choice of $a=-\tfrac{1}{4}\!\sqrt{7\!-\!3\sqrt{5}}$ in Family~(2) the metric is a Bach-flat Ricci soliton (see \cite{CL-GM-GR-GR-VL} and Remark~\ref{re:Bach-llanas}).

Proceeding as in Remark~\ref{re:E-grafico}, the set of homothetic invariants $\{t,\|\rho\|^2\, \tau^{-2}$, $\| R\|^2\, \tau^{-2}$, $\|\nabla\rho\|^2\,\tau^{-3}$, $\|\nabla R\|^2\tau^{-3}\}$ suffices to show that there are no homotheties between metrics in Theorem~\ref{th:heisenberg} and Theorem~\ref{th:E(1,1)}.
	Furthermore, in Theorem~\ref{th:heisenberg}, it also follows  that there are no homotheties between metrics in different families and, moreover,
	that different values of the parameters in any of the families give rise to metrics which are not homothetic.

Even more, for a fix value of $t\in[-3,-\frac{1}{4})$, there exists a unique homothetic class of $\mathcal{F}_t$-critical metrics in each of the six families given in Theorem~\ref{th:heisenberg}, with the only exception of Family~(5), where there are exactly two homothetic classes of $\mathcal{F}_t$-critical metrics if $t\in(-\frac{3}{2},-\frac{7}{15})$. 
\end{remark}

\begin{proof}
	A left-invariant metric is $\mathcal{F}_t$-critical if and only if the symmetric $(0,2)$-tensor field 
	$
	\mathfrak{F}_t=
	-\Delta \rho 
	+\frac{1}{2} (\|\rho\|^2 +t\tau^2) g 
	-2 R[\rho] 
	-2t\tau\rho
	$
	vanishes, where   the components $\mathfrak{F}_t(e_i,e_j)$ are polynomials on  the parameters determining the metric~\eqref{eq:corH3} and $t$.  In order to solve the system $\{ \mathfrak{F}_t(e_i,e_j)=0\}$, we analyze the different  possibilities depending on whether $F$ and $H$ vanish or not. Note that, by Remark~\ref{re:4-2}, the cases $F=0$ and $H=0$ are equivalent.
	
	\subsection{\boldmath Case $F=H=0$\unboldmath}
	Since $\gamma\neq 0$  we consider a representative in the homothetic class with $\gamma=1$. In this case, the non-vanishing components of $\mathfrak{F}_t$ are $\mathfrak{F}_t(e_i,e_i)$, $i=1,\dots,4$, and
	\[
	\mathfrak{F}_t(e_1,e_2)=
	c(d-a)  
	\left(  (12 a^2 + 12 d^2 + 20 a d + 1) t + a^2 + 4 c^2 + d^2 - 6 a d  \right).
	\]
	Next we analyze the vanishing of each one of the three factors in $\mathfrak{F}_t(e_1,e_2)$ separately.

	\subsubsection{Case $d=a$}
	In this case  the left-invariant metric is described by 
		$$
		[e_1,e_2]=e_3,
		\quad
		[e_1,e_4]=a e_1 - c e_2,
		\quad
		[e_2,e_4]=c e_1+a e_2,
		\quad
		[e_3,e_4]=2a e_3.
		$$
		The sectional curvature is independent of the parameter $c$ and, hence, the metric is homothetic to a metric with $c=0$ (see \cite{Kulkarni}). Moreover, the isometry $e_4\mapsto-e_4$ shows that the parameter $a$ may be replaced by $-a$. Hence, we assume $c=0$ and $a\geq 0$ to compute the non-vanishing terms of $\mathfrak{F}_t$:
	\[
	\begin{array}{l}
	\mathfrak{F}_t(e_1,e_1) = \mathfrak{F}_t(e_2,e_2)=
	-\tfrac{3}{5}\mathfrak{F}_t(e_3,e_3) =
	-3\mathfrak{F}_t(e_4,e_4)
	
	\\
	\noalign{\medskip}
	\phantom{\mathfrak{F}_t(e_1,e_1) = 
		\mathfrak{F}_t(e_2,e_2)}
	
	=\tfrac{3}{8}   (2 a - 1) (2 a + 1) \left( (44 a^2 + 1) t + 3 (4 a^2 + 1) \right).
	\end{array}
	\]
	Note that if $a=1/2$ then the metric is Einstein (see Remark~\ref{re:einstein-H}). Hence, it follows that  $t=-\frac{3 (4 a^2 + 1)}{44 a^2 + 1}$ and the energy is given by $\energy_t = -36 a^2$.  These metrics correspond to those in Family~(1) if $a=0$, with $t=-3$, and to those  in Family~(4) if $a> 0$ and $a\neq 1/2$, where $t=-\frac{3(4a^2+1)}{44a^2+1}\in(-3, -\frac{3}{11})\setminus\{-\frac{1}{2}\}$. Metrics in Family~(1) are algebraic Ricci solitons, since  $\operatorname{Ric}+\frac{3}{2}\operatorname{Id}$ is a derivation.
	
	%In this case, the metric is an algebraic Ricci soliton if and only if $a=0$, {\magenta in which case}  $\operatorname{Ric}+\frac{3}{2}\operatorname{Id}$ is a derivation.  
	
	\subsubsection{Case $c=0$, $d\neq a$}
 The  left-invariant metric is given by 
		$$
		[e_1,e_2]=e_3,
		\quad 
		[e_1,e_4]=a e_1,
		\quad
		[e_2,e_4]=de_2,
		\quad
		[e_3,e_4]=(a+d)e_3.
		$$
		The isometry $e_4\mapsto -e_4$ transforms the parameters $(a,d)$ into $(-a,-d)$, and the isometry $(e_1,e_2,e_3,e_4)\mapsto (e_2,-e_1,e_3,e_4)$ transforms $(a,d)$ into $(d,a)$. 
		Hence one may assume $|a|<d$ or $a=-d$.
		We  compute
	\[
	\begin{array}{l}
	\mathfrak{F}_t(e_3,e_3)=
	-\frac{1}{8} (4 (a^2 + d^2 + 3 a d) - 5) (12 a^2 + 12 d^2 + 20 a d + 1)  t 
	
	\\
	\noalign{\medskip}
	\phantom{\mathfrak{F}_t(e_3,e_3)=}
	
	- 2 (a^2 + d^2 + a d) (a^2 + d^2 + 3 a d) + \frac{15}{8} ,
	
	\\
	\noalign{\medskip}
	
	\mathfrak{F}_t(e_4,e_4)=
	-\frac{1}{8}(4 (a^2 + d^2 - a d) - 1) (12 a^2 + 12 d^2 + 20 a d + 1)  t 
	
	\\
	\noalign{\medskip}
	\phantom{\mathfrak{F}_t(e_4,e_4)=}
	
	- 2 (a^4 + d^4 + a^2 d^2) +  \frac{3}{8},
	\end{array}
	\]
	from where it follows that
	\[
	\mathfrak{F}_t(e_3,e_3) - 5\mathfrak{F}_t(e_4,e_4)=
	2(a-d)^2
	\left( (12 a^2 + 12 d^2 + 20 a d + 1) t 
	+ 4 (a^2 + d^2 + a d) \right).
	\]
	Since $d\neq a$, we have $t=-\frac{4 (a^2 + d^2 + a d)}{12 a^2 + 12 d^2 + 20 a d + 1}$. For this value of $t$ we get
	\[
	\begin{array}{l}
	\mathfrak{F}_t(e_1,e_1) = \mathfrak{F}_t(e_2,e_2)=
	-\tfrac{3}{5}\mathfrak{F}_t(e_3,e_3) =
	-3\mathfrak{F}_t(e_4,e_4)
	=\tfrac{3}{8}   \left(4 (a^2 + d^2 + a d) - 3
	\right) .
	\end{array}
	\]
	Therefore,  $4 (a^2 + d^2 + a d) - 3=0$. Since $|a|<d$ or $a=-d$, we have $a\in  [-\frac{\sqrt{3}}{2},\frac12 )$. The value of $t$ is $t=-\frac{3}{2(4ad+5)}\in[-\frac{3}{4}, -\frac{1}{4})$ and a direct computation shows that the energy vanishes.
	Moreover, $\operatorname{Ric}+\frac{3}{2}\operatorname{Id}$ is a derivation determining an algebraic Ricci soliton. This family of critical metrics corresponds to Family~(2).

	\subsubsection{Case $t=-\frac{a^2 + 4 c^2 + d^2 - 6 a d}{12 a^2 + 12 d^2 + 20 a d + 1}$, $c\neq 0$, $d\neq a$}\label{eq:H3-(3)}
	For this value of the parameter $t$ we get
	\[
	\begin{array}{l}
	\mathfrak{F}_t(e_3,e_3)=
	\phantom{-}
	
	\frac{1}{8}\left(
	4 c^2 (6 a^2 + 6 d^2 + 8 a d - 5)
	- 12 (a^4 + d^4)
	\right.
	
	\\
	\noalign{\medskip}
	\phantom{\mathfrak{F}_t(e_3,e_3)=-\frac{1}{8}(}
	
	\left.
	- 2 a d \left(38  a^2 + 38 d^2 + 72 a d - 15 \right)
	- 5 a^2 - 5 d^2 + 15 
	\right) ,
	\\
	\noalign{\medskip}
	
	\mathfrak{F}_t(e_4,e_4)=
	- \frac{1}{8}\left(
	4 c^2 (2 a^2 + 2 d^2 - 8  a d + 1)
	+12 (a^4 + d^4)
	\right.
	
	\\
	\noalign{\medskip}
	\phantom{\mathfrak{F}_t(e_4,e_4)=-\frac{1}{8}(}
	
	\left.
	+2 a d (14  a^2 + 14   d^2 - 8 a d - 3 )
	+a^2 + d^2 - 3 
	\right),
	\end{array}
	\]
	and a direct calculation shows that
	\[
	\mathfrak{F}_t(e_3,e_3)-5 \mathfrak{F}_t(e_4,e_4)
	=
	2(a-d)^2 (3 a^2 + 4 c^2 + 3 d^2 + 10 a d).
	\]
	Since $d\neq a$, it follows that $3 a^2 + 4 c^2 + 3 d^2 + 10 a d=0$ and this condition leads to
	\[
	\begin{array}{l}
	\mathfrak{F}_t(e_1,e_1) = \mathfrak{F}_t(e_2,e_2)=
	-\tfrac{3}{5}\mathfrak{F}_t(e_3,e_3) =
	-3\mathfrak{F}_t(e_4,e_4)
	
	\\
	\noalign{\medskip}
	\phantom{\mathfrak{F}_t(e_1,e_1) = \mathfrak{F}_t(e_2,e_2)}
	
	=
	\frac{1}{8}
	\left(32 c^4 + (32 a d + 8) c^2 - 28 a d - 9
	\right) .
	\end{array}
	\]
	Hence, if 
	\begin{equation}\label{eq: H3-1}
	3 a^2 + 4 c^2 + 3 d^2 + 10 a d=0
	\quad\text{and}\quad
	32 c^4 + (32 a d + 8) c^2 - 28 a d - 9=0,
	\end{equation}
	then the left-invariant metric given by
	\[
	[e_1,e_2]=e_3,
	\quad
	[e_1,e_4]=ae_1-ce_2,
	\quad
	[e_2,e_4]=c e_1+d e_2,
	\quad
	[e_3,e_4]=(a+d)e_3,
	\]
	is critical for $t=-\frac{4(7a d-2c^2)}{48c^2+60ad-3}$
	and the energy reads $\energy_t = 28 ad-8c^2+9$.
	
Now, $(e_1,e_2,e_3,e_4)\mapsto (e_1,-e_2,-e_3,e_4)$ defines an isometry which transforms the parameter $c$ into $-c$, so we may assume $c>0$. Moreover, the isometries $(e_1,e_2,e_3,e_4)\mapsto (-e_1,e_2,-e_3,-e_4)$ and  $(e_1,e_2,e_3,e_4)\mapsto (e_2,e_1,-e_3,-e_4)$ transform $(a,d)$ into $(-a,-d)$ and $(a,d)$ into $(-d,-a)$, respectively. Hence, as in the previous case,  we may further restrict the parameters  to satisfy $|a|<d$ or $d=-a$.
		
If $|a|<d$, then $t=-\frac{4(7a d-2c^2)}{48c^2+60ad-3}\in(-3,-\frac{21}{52})$ and the energy does not vanish, i.e., $28 ad-8c^2+9\neq0$. These metrics  correspond to  Family~(5). 
	
If $d=-a$, then using~(\ref{eq: H3-1}) and taking into account that $c\neq 0$ we get that $a^2=c^2=1/4$. Therefore the expression of the non-zero Lie brackets reduces to
		$$
		[e_1,e_2]=e_3,
		\quad
		[e_1,e_4]=\pm\tfrac{1}{2}\left(e_1- \varepsilon e_2\right),
		\quad
		[e_2,e_4]=\pm\tfrac{1}{2} \left(\varepsilon e_1- e_2\right), 
		$$
		where $\varepsilon^2=1$.  Now, 
		considering the orthonormal basis given by		
		\[
		\bar e_1=\mp\tfrac{1}{\sqrt{2}}\left(\varepsilon e_1-e_2\right),
		\quad
		\bar e_2=\tfrac{-1}{\sqrt{2}}\left( e_1+\varepsilon e_2\right),
		\quad
		\bar e_3=\pm e_3,
		\quad
		\bar e_4= -\varepsilon e_4,
		\]
		the only non-zero brackets are $[\bar e_1,\bar e_2]=\bar e_3$ and $[\bar e_2,\bar e_4]=-\bar e_1$, which show that the metrics above are  isometric to those in Family~(3). The value of $t$ for which these metrics are critical is $t=-3/2$ and the energy vanishes. Moreover, $\operatorname{Ric}+\frac{3}{2}\operatorname{Id}$ is a derivation determining an algebraic Ricci soliton.

	\subsection{\boldmath  Case $F=0$, $H\neq 0$\unboldmath}
	As in the first case we rescale to assume $\gamma=1$ and compute
	\[
	\begin{array}{l}
	\mathfrak{F}_t(e_1,e_4)
	=
	\tfrac{1}{2} c(a-5d) H,
	
	\\
	\noalign{\medskip}
	
	\mathfrak{F}_t(e_2,e_4)
	=
	-\tfrac{1}{2}H \left(
	( 12 a^2 + 12 d^2 + 20 a d +H^2 + 1) t 
	\right.
	\\
	\noalign{\medskip}
	\phantom{\mathfrak{F}_t(e_2,e_4)=-\tfrac{1}{2}H(}
	\left.
	+  4 a^2 + c^2 + 3 d^2 + 2 a d + 3 H^2+ 3
	\right) .
	\end{array}
	\]
	Since $H\neq 0$, from $\mathfrak{F}_t(e_2,e_4)=0$ it follows that 
	$t=-\frac{   4 a^2 + c^2 + 3 d^2 + 2 a d+3 H^2 + 3}
	{ 12 a^2 + 12 d^2 + 20 a d+H^2 + 1}$, so  we fix the value of $t$ and set  $\mathfrak{F}=\mathfrak{F}_t$ for that particular value.  Moreover, from $\mathfrak{F}_t(e_1,e_4)=0$ we have that either $c=0$ or $c\neq 0$ and $a=5d$. We analyze the two cases separately.

	\subsubsection{Case $c=0$}
		If $c=0$ the left-invariant metric is given by the Lie brackets 
		\begin{equation}\label{eq:Heisenberg-F=0-c=0}
		[e_1,e_2]=e_3,
		\quad
		[e_1,e_4]=ae_1+H e_3,
		\quad
		[e_2,e_4]=d e_2,
		\quad
		[e_3,e_4]=(a+d)e_3.
		\end{equation}
		The changes of basis given by $(e_1,e_2,e_3,e_4)\mapsto (-e_1,-e_2,e_3,e_4)$ and $(e_1,e_2,e_3,e_4)\mapsto (-e_1,-e_2,e_3,-e_4)$ are isometries  which transform the parameters $(a,d,H)$ into $(a,d,-H)$ and $(-a,-d,H)$, respectively. Hence we may assume $H>0$ and $a\leq d$.
		
		We compute
	\[
	\begin{array}{l}
	\mathfrak{F}(e_1,e_1)
	=
	\phantom{-}
	
	\tfrac{1}{8}\left(
	12 d^4 - 8 a^3 d  + 28 a d^3 + 
	4 a^2 d^2
	\right.
	
	\\
	\noalign{\medskip}
	\phantom{\mathfrak{F}(e_1,e_1)=-\tfrac{1}{8}(}
	\left.
	
	+ 24 a^2 - 27 d^2 - 6 a d
	+(24 a^2 + 3 d^2 + 34 a d ) H^2
	\right) ,
	
	\\
	\noalign{\medskip}
	
	\mathfrak{F}(e_3,e_3)
	=-\tfrac{1}{8}\left(
	4 d^4 + 8 a^3 d + 20 a d^3 + 28 a^2 d^2 
	\right.
	
	\\
	\noalign{\medskip}
	\phantom{\mathfrak{F}(e_3,e_3)	=-\tfrac{1}{8}(}
	\left.
	
	+ 8 a^2 + 3 d^2 - 26 a d
	+ (8 a^2 + 13 d^2 + 14 a d ) H^2 
	\right),

	\end{array}
	\]
	from where it follows that
	\[
	\mathfrak{F}(e_1,e_1)-\mathfrak{F}(e_3,e_3)
	= (2a+d) 
	\left( 2 (d^2+H^2  + 1) a + ( 2 d^2+2 H^2 - 3) d  \right).
	\]
	Hence, we have $d=-2a$ or $a=-\frac{ ( 2 d^2+2 H^2 - 3) d }{2 ( d^2+H^2 + 1) }$.
	
	\medskip
	
	If $d=-2a$ then $\mathfrak{F}$ is determined by
	\[
	\mathfrak{F}(e_1,e_1) = -\mathfrak{F}(e_2,e_2)=
	\mathfrak{F}(e_3,e_3) =
	-\mathfrak{F}(e_4,e_4) 
	=
	-a^2 (4H^2+9),
	\]
	so $a=d=0$ and the left-invariant metric is given by
	\[
	[e_1,e_2]=e_3,
	\quad
	[e_1,e_4]=H e_3.
	\]
	Considering the orthogonal basis given by
	$$
	\bar e_1=\tfrac{1}{H^2+1}\left(e_2+H e_4\right), \quad
	\bar e_2=\tfrac{1}{\sqrt{H^2+1}} e_1,
	\quad
	\bar e_3=\tfrac{-1}{\sqrt{H^2+1}} e_3,
	\quad
	\bar e_4= \tfrac{1}{H^2+1}(He_2-e_4),
	$$
	the only non-zero bracket now becomes $[\bar{e}_1,\bar{e}_2]=\bar{e}_3$.  Moreover, the inner product satisfies $\langle\bar{e}_i,\bar{e}_j\rangle=\frac{1}{H^2+1}\langle e_i,e_j\rangle$, so these metric Lie groups are isomorphically homothetic to the product metric in Family~(1).

	\medskip
	
	If $a=-\frac{ ( 2 d^2+2 H^2 - 3) d }{2 ( d^2+H^2 + 1) }$ and $d\neq-2a$, then $\mathfrak{F}$ is determined by
	\[
	\begin{array}{l}
	\mathfrak{F}(e_1,e_1) = \mathfrak{F}(e_3,e_3)=
	-\frac{(d^2+H^2+6)d^2}{8(d^2+H^2+1)^3}
	\, \mathbf{q} ,

	\\
	\noalign{\medskip}
	
	\mathfrak{F}(e_2,e_2) =
	-\frac{(d^2+H^2-14)d^2}{8(d^2+H^2+1)^3}
	\, {\mathbf{q}},
	
	\\
	\noalign{\medskip}
	
	\mathfrak{F}(e_4,e_4) =
	\frac{(3d^2+3H^2-2)d^2}{8(d^2+H^2+1)^3}
	\, {\mathbf{q}}	  ,
	\end{array}
	\]
	where ${\mathbf{q}} = 4 d^6 + 5(3 H^2 - 1) d^4 + (18 H^4 + 6 H^2 + 13) d^2 + (H^2 +  1)^2 (7 H^2 - 3)$. Note that $d\neq 0$ since $d\neq -2a$. Hence, from $\mathfrak{F}(e_1,e_1)=0$ we have that, necessarily, ${\mathbf{q}}=0$. Thus a straightforward calculation shows that $a\neq d$, since otherwise $H=0$.  Now, subject to this constraint,    $t=-\frac{   4 a^2  + 3 d^2 + 2 a d+3 H^2 + 3}
	{ 12 a^2 + 12 d^2 + 20 a d+H^2 + 1}\in(-3,-\frac{7}{16})$ 
	and the energy is given by 
	\small
	$$
	\energy_t =\tfrac{1}{4}\left\{ 24 a^3 d+2 ad(22 d^2\!-\! 19H^2\!-\!31)+3d^2(4d^2\!-\!7H^2\!-\!13)+4a^2(13d^2\!-\!6H^2\!-\!10)	
	\right\}.	
	$$\normalsize
	Since ${\mathbf{q}}=0$ and $d\neq 0$, a straightforward calculation shows that the energy never vanishes. Indeed, $\energy_t\in (-\frac{27}{4}, 0)$.
	These metrics correspond to Family~(6).
	
	\subsubsection{Case $a=5d$, $c\neq 0$}
	We calculate 
	\[
	\begin{array}{l}
	\mathfrak{F}(e_1,e_1)=
	-\frac{1}{8}\left(
	c^2 ( 1340 d^2+11 H^2 - 3) + d^2 (  748 d^2-773 H^2 -543)
	\right) ,
	
	\\
	\noalign{\medskip}
	
	\mathfrak{F}(e_2,e_2)=
	\frac{1}{8}\left(
	c^2 ( 1348 d^2+13 H^2 + 3) + d^2 ( 3476 d^2-619 H^2 - 609) 
	\right),
	\end{array}
	\]
	which imply
	\[
	\begin{array}{l}
	( 1348 d^2+13 H^2 + 3)\mathfrak{F}(e_1,e_1)
	+ ( 1340 d^2 +11 H^2- 3)\mathfrak{F}(e_2,e_2)
	
	\\
	\noalign{\medskip}
	\phantom{(13 H^2 + 13}
	
	= 
	27 d^2 \left(
	16896 d^4 + 4  (279 H^2 - 112) d^2 + (15 H^2 + 21 ) H^2 + 16
	\right).
	\end{array}
	\]
	This expression vanishes only if $d=0$ and, in such a case, we obtain  $\mathfrak{F}(e_2,e_2)=\frac{1}{8}c^2(13H^2+3)\neq 0$. Hence, there are no critical metrics with $a=5d$ and $c\neq 0$.

	\subsection{\boldmath  Case $FH\neq 0$\unboldmath}
	 We consider Lie brackets given by \eqref{eq:corH3} with $FH\neq 0$. From the expression of $\mathfrak{F}_t(e_2,e_4)$, given by
	\[
	\begin{array}{l}
	\mathfrak{F}_t(e_2,e_4)
	=
	\frac{-1}{2} 
	\gamma \left(   (12 a^2 + 12 d^2 + 20 a d + F^2 + 
	H^2 + \gamma^2) H t  
	\right.
	
	\\
	\noalign{\medskip}
	\phantom{\mathfrak{F}_t(e_2,e_4)=-\frac{1}{2} 
		\gamma(}
	
	\left.
	\!\!
	+(5 a - d) c  F + (4 a^2 + c^2 + 
	3 d^2 + 2 a d ) H + 3 ( F^2 + H^2 +  \gamma^2) H  \right) \!,
	\end{array}
	\]
	we obtain that
	\[
	t=
	-\tfrac{(5 a - d) c  F + (4 a^2 + c^2 + 
		3 d^2 + 2 a d ) H + 3 ( F^2 + H^2 +  \gamma^2) H }
	{\left(12 a^2 + 12 d^2 + 20 a d + F^2 + H^2 + \gamma^2\right) H} .
	\]
	 We fix this value of $t$ and observe that each non-null component $\mathfrak{F}_t(e_i,e_j)$  is multiplied by $1/H$. Set $\mathfrak{F}=H\,\mathfrak{F}_t$ to remove this factor.
	 Now we use Gröbner bases to show that there do not exist other critical metrics than those already found. Throughout the remaining of this section we fix the  order to be lexicographic and distinguish two cases: $d=0$ and $d\neq 0$.

	\subsubsection{Case $d=0$}
	We rescale to take $\gamma=1$ and  consider the polynomial ring $\mathbb{R}[a,c,d,F,H,\gamma]$. Computing a Gröbner basis for the ideal 
	$\mathcal{I}_1=\langle \{\mathfrak{F}(e_i,e_j)\}\cup \{d,\gamma-1\} \rangle$ we get $20$ polynomials which include the following:
	\small\[
	\begin{array}{l}
	\mathbf{g}_{1}^1 = 
	c^2 H^2 (20 H^4 + 29 H^2 + 50) (405 H^4 + 601 H^2 + 150) (4232 H^4 +    5888 H^2 + 4575) , 
	
	\\
	\noalign{\medskip}
	
	\mathbf{g}_{1}^2 = a (5 c F^2 + a F H - c H^2).
	\end{array}
	\]\normalsize
	 From $\mathbf{g}_{1}^1=0$, since $H\neq 0$, we see that $c=0$. Now, from $\mathbf{g}_{1}^2=0$, as $FH\neq 0$, we see that $a=0$ too. Hence we have a critical left-invariant metric given by
	\[
	[e_1,e_2]=e_3,\quad
	[e_1,e_4]=H e_3,\quad
	[e_2,e_4]=F e_3.
	\]
	Considering the orthogonal basis given by
	$$
	\begin{array}{l}
	\bar e_1=\frac{1}{(F^2+H^2+1)\sqrt{F^2+H^2}}\left(F e_1-H e_2- (F^2+H^2) e_4\right), \qquad
	\bar e_3=\frac{-1}{\sqrt{F^2+H^2+1}} e_3,\\
	\noalign{\smallskip}
	\bar e_2=\frac{-1}{\sqrt{(F^2+H^2+1)(F^2+H^2)}}\left(H e_1+F e_2\right),\phantom{......}
	\bar e_4= \frac{-1}{F^2+H^2+1}\left( Fe_1-He_2+e_4\right),	
	\end{array}
	$$
	the only non-zero bracket is $[\bar{e}_1,\bar{e}_2]=\bar{e}_3$. Moreover $\langle\bar{e}_i,\bar{e}_j\rangle=\frac{1}{F^2+H^2+1}\langle e_i,e_j\rangle$, which shows that the above metric Lie groups are isomorphically homothetic to the product metric in Family~(1).

	\subsubsection{Case $d\neq 0$}
 We consider a representative in the homothetic class with $d=1$. 
	Moreover, we introduce an auxiliary variable $\tilde\gamma$ to express that the structure constant $\gamma$ is non-zero using the polynomial $\gamma\tilde\gamma-1$. In the polynomial ring $\mathbb{R}[\gamma,\tilde\gamma,F,H,a,c,d]$ we   consider the ideal $\mathcal{I}_2$ generated by $\{\mathfrak{F}(e_i,e_j)\}\cup\{d-1,\gamma\tilde\gamma-1\}$. Computing a Gröbner basis we get $53$ polynomials, among which we have 
	\[
	\mathbf{g}_2 =
	c H (a - 1)^2  (a + c^2)   \mathbf{p}_2
	(2 a^2 + c^2 + 5 a + 
	2) (3 a^2 + 4 c^2 + 10 a + 3)  ,
	\] 
	where $\mathbf{p}_2 =  24 a^4 + 9 c^4 - 50 a^2 c^2 - 50 a^3 + 
	134 a c^2 + 77 a^2 - 50 c^2 - 50 a + 24$. Recall that $H\neq 0$. We analyze the vanishing of each one of the other factors in $\mathbf{g}_2$ separately.

	\smallskip

	\paragraph{\underline{\emph{Case $a=1$}}}
	We  compute
	\[ 
	\mathfrak{F}(e_1,e_2) = -\tfrac{1}{2} H \left(6 c^2 F H + c (9  F^2 - 13  H^2) - 20 F H\right),
	\;\;
	\mathfrak{F}(e_1,e_3) = -2 c^2(F^2+H^2).
	\]
	Since $FH\neq0$, from $\mathfrak{F}(e_1,e_3)=0$ we obtain $c=0$ and hence $\mathfrak{F}(e_1,e_2)\neq 0$. So there are no critical metrics in this case.
	
	\smallskip
	
	\paragraph{\underline{\emph{Case $c=0$}}}
	As in the previous case,  there is no solution  if $c=0$, because the expressions
	\[ 
	\mathfrak{F}(e_1,e_2) = \tfrac{1}{2} (a + 3) (4 a + 1) F H^2,
	\quad
	\mathfrak{F}(e_1,e_4) = -\tfrac{1}{2} (a^2 - 1) F H \gamma ,
	\]
	give rise to a not compatible system of equations for $F H \gamma\neq 0$.

	\smallskip

	\paragraph{\underline{\emph{Case $a+c^2=0$, $c\neq 0$}}}
	We consider the ideal  $\mathcal{I}_{23}=\langle\mathcal{I}_2\cup\{a+c^2\}\rangle$ in the same polynomial ring $\mathbb{R}[\gamma,\tilde\gamma,F,H,a,c,d]$ and compute a Gröbner basis. This basis consists of $22$ polynomials and includes the following:
	\[
	\begin{array}{l}
	\mathbf{g}_{23}^1 = c H (c - 1)^2  (c + 1)^2 (c^2 + 1)^2  (c^4 + c^2 + H^2) ,
	
	\\
	\noalign{\medskip}
	
	\mathbf{g}_{23}^2 =  (c F+H)\left((5 c^2 + 1) F - c (c^2 + 5) H \right) .
	\end{array}
	\]
	Since $cH\neq0$, it follows that $c=\varepsilon_1$ and $H =\varepsilon_2 F$, where $\varepsilon_1^2=\varepsilon_2^2=1$. Now, the  component  $\mathfrak{F}(e_1,e_1)$ is given by
	\[
	\mathfrak{F}(e_1,e_1) = -\tfrac{3}{4} F  
	\left(2 (\varepsilon_1 + \varepsilon_2) (F^2 - 
	2) + (3 \varepsilon_1 - \varepsilon_2) \gamma^2 \right),
	\] 
	which implies  $\varepsilon_1=\varepsilon_2$ and  $2F^2+\gamma^2-4=0$. Hence, we have the left-invariant metric determined by
	\[
	[e_1,e_2] = \gamma e_3,\quad
	[e_1,e_4] = -e_1-\varepsilon e_2 + \varepsilon F e_3,\quad
	[e_2,e_4] = \varepsilon e_1+ e_2+F e_3,
	\]
	where $\varepsilon^2=1$ and $2F^2+\gamma^2-4=0$. In the orthogonal basis given by
	\[
	\begin{array}{ll}
	\bar e_1=\frac{-1}{2\sqrt{2}}\left(\varepsilon_1 e_1+e_2\right),&
	\bar e_3=\frac{-1}{2} e_3,
	\\
	\noalign{\smallskip}
	\bar e_2=\frac{-1}{4\sqrt{2}}\left( \gamma e_1-\varepsilon_1 \gamma e_2-2F e_4\right),&
	\bar e_4= \frac{\varepsilon_1}{4}\left( F e_1-\varepsilon_1 F e_2+\gamma e_4\right),
	\end{array}
	\]
 the non-zero Lie brackets reduce to $[\bar e_1,\bar e_2]=\bar e_3$ and $[\bar e_2,\bar e_4]=-\bar e_1$. Moreover, $\langle\bar{e}_i,\bar{e}_j\rangle=\frac{1}{4}\langle e_i,e_j\rangle$, so the
	metrics above are isomorphically homothetic to the critical metric in Family~(3) (see also~$\S$\ref{eq:H3-(3)}).

	\smallskip
	
	\paragraph{\underline{\emph{Case $\mathbf{p}_2=0$, $a+c^2\neq0$}}}
	In this case we start considering the polynomial ring $\mathbb{R}[a,\gamma,\tilde\gamma,c,F,H,d]$ and computing  a Gröbner basis for the ideal $\mathcal{I}_{24}=\langle\mathcal{I}_2\cup \{\mathbf{p}_2\}\rangle$. Thus, we get $44$ polynomials, being  one of them
	\[
	\begin{array}{l}
	\mathbf{g}_{24} = 
	F  H^2 (F - H)^2  (F + H)^2 (F^2 + H^2) \mathbf{p}_{24}
	(64 H^4 - 182 H^2 + 133) 
	
	\\
	\noalign{\medskip}
	\phantom{\mathbf{g}_{4} = }
	
	\times (1045897298699 H^4 - 528162809548 H^2 + 226891146700) ,
	\end{array}
	\]
	where 
	\small\[
	\!\!\!\begin{array}{l}
	\mathbf{p}_{24} =
	22500 F^{10} + 16 H^{10} + 136450 F^8 + 2047 H^8 + 262506 F^6 + 
	65418 H^6 + 171264 F^4 
	
	\\
	\noalign{\medskip}
	\phantom{\mathbf{g}_4^2}
	
	- 2816 H^4 + 40960 F^2 + 40960 H^2
	- \left(11891 F^4 H^2 - 3585 F^2 H^4 - 32697 F^2 H^2 
	\right.
	
	\\
	\noalign{\medskip}
	\phantom{\mathbf{g}_4^2}
	
	\left.
	- 6600 F^6 + 
	408 H^6 - 308704 F^4 + 23246 H^4 - 475614 F^2 - 186366 H^2 - 
	188928\right) \! F^2 H^2.
	\end{array}
	\]\normalsize
	The  analysis of the   factor   $\mathbf{p}_{24} $ shows that it is non-negative and, moreover,  it vanishes if and only if $F=H=0$. Furthermore, the last two factors in $\mathbf{g}_{24}$ do not have real roots. Hence, since $F H\neq 0$,  $\mathbf{g}_{24}=0$ implies $F^2-H^2=0$. Now, the computation of a second Gröbner basis for the ideal $\mathcal{\widetilde I}_{24}$ spanned by $\mathcal{I}_2\cup\{\mathbf{p}_2, F^2-H^2\}$ leads to $24$ polynomials, among which we choose
	\[
	\mathbf{\tilde g}_{24} = (c F - H) H^2 (H^2 + 20) (10201 H^4 + 3904 H^2 + 1024).
	\]
	Thus, $cF-H=0$ which, together with $F^2-H^2=0$, gives $c=\varepsilon$ and $H=\varepsilon F$, where $\varepsilon^2=1$. For $d=1$ and these relations among $F$, $H$ and $c$ we have that
	\[
	\mathfrak{F}(e_1,e_3)= -\tfrac{1}{2} (a+1)(a+3) F^2.
	\]
	Consequently, either $a=-1$ or $a=-3$. Finally, note that   $a=-1$ is not possible since we are assuming    $a+c^2\neq 0$. Also, if $a=-3$, then $\mathbf{p}_2=3268\neq 0$. Hence, we conclude that there are no critical metrics in this case.

	\smallskip
	
	\paragraph{\underline{\emph{Case $2 a^2 + c^2 + 5 a + 2=0$, $a+c^2\neq0$, $c\neq 0$}}}
	We consider the ideal $\mathcal{I}_{25}$ generated by $\mathcal{I}_2\cup\{2 a^2 + c^2 + 5 a + 2\}$ in the polynomial ring $\mathbb{R}[\tilde\gamma,F,H,\gamma,c,a,d]$ and  compute a Gröbner basis, obtaining $34$ polynomials. We see that  
	\[
	\mathbf{g}_{25} =  c H^2 (a + 1)^2 \mathbf{p}_{25}
	\]
	belongs to the basis, where 
	\[
	\mathbf{p}_{25} = 
	4 (a^2 + 5 a + 1) H^2 + (a - 1) 
	\left((2 - 7 a) \gamma^2 + 18 a^3 + 44 a^2 + 10 a - 4\right) .
	\]
	Note that $cH\neq 0$. Moreover, $a=-1$ is not possible since $2 a^2 + c^2 + 5 a + 2=0$ and  $a+c^2\neq0$.
	Hence, we have $\mathbf{p}_{25}=0$. Now we compute a second Gröbner basis, in this case for the ideal $\mathcal{\widetilde I}_{25} = \langle\mathcal{I}_{25}\cup\{\mathbf{p}_{25}\}\rangle$. We obtain $20$ polynomials, among which we have
	\[
	\mathbf{\tilde g}_{25} = H 
	\left(164 F^2 - ( 60 a+364) H^2 + (a - 1) (105  \gamma^2 - 270 a^2 - 784 a - 374)\right).
	\]
	Next we show that there are no critical metrics in this case. In order to do this, it will be  crucial the restriction $a\in(-2,-\tfrac{1}{2})\setminus\{-1\}$ derived from  the conditions $2 a^2 + c^2 + 5 a + 2=0$, $a+c^2\neq0$ and  $c\neq 0$. 
	Firstly,   from $\mathbf{p}_{25} = 0$ we get
	\[
	H^2  = - \tfrac{(a - 1) \left((2 - 7 a) \gamma^2 + 18 a^3 + 44 a^2 + 10 a - 4\right)}
	{4 (a^2 + 5 a + 1)},
	\]
	which is well-defined because $a\in(-2,-\tfrac{1}{2})\setminus\{-1\}$.  Moreover, $H^2>0$  implies  
	\begin{equation}\label{eq: H3 ineq 5-1}
	\xi_1(\gamma,a) = (2 - 7 a) \gamma^2 + 18 a^3 + 44 a^2 + 10 a - 4 < 0.
	\end{equation}
	Since $a\in(-2,-\tfrac{1}{2})\setminus\{-1\}$, the study of the function $\xi_1$ shows that $\xi_1(\gamma,a)>\tfrac{-1}{4}$. Therefore, we have that necessarily $\xi_1(\gamma,a)\in(-\tfrac{1}{4},0)$.
	Secondly, we use the expression for  $H^2$ in $\frac{1}{H}\mathbf{\tilde g}_{25}=0$ to obtain
	\[
	F^2  = \tfrac{(a - 1) \left((2 a - 7) \gamma^2 - 4 a^3 + 10 a^2 + 44 a + 18\right)}
	{4 (a^2 + 5 a + 1)}.
	\]
	It follows from this expression that 
	\begin{equation}\label{eq: H3 ineq 5-2}
	\xi_2(\gamma,a) = (2 a - 7) \gamma^2 - 4 a^3 + 10 a^2 + 44 a + 18 > 0
	\end{equation}
	and, in view of the domain of the parameter $a$, $\xi_2$ satisfies $\xi_2(\gamma,a)\in(0,2)$.
	Finally, it is easy to check that (\ref{eq: H3 ineq 5-1}) and (\ref{eq: H3 ineq 5-2}) are incompatible  for
	$a\in(-2,-\tfrac{1}{2})\setminus\{-1\}$. Indeed, assuming (\ref{eq: H3 ineq 5-2}),   $\xi_1(\gamma,a)\in(8,\tfrac{120}{11})$ is obtained, which contradicts the negativity of $\xi_1$.

	\medskip
	
	\paragraph{\underline{\emph{Case 
				$3 a^2 + 4 c^2 + 10 a + 3=0$, $a+c^2\neq0$, $c\neq 0$, $a\neq 1$}}}
	In  the polynomial ring $\mathbb{R}[\tilde\gamma,H,\gamma,F,c,a,d]$ we compute a Gröbner basis for the ideal $\mathcal{I}_{26}$ generated by $\mathcal{I}_2\cup\{3 a^2 + 4 c^2 + 10 a + 3\}$. We get $32$ polynomials, being two of them  
	\[
	\mathbf{g}_{26}^1 = (a - 1) (a + 1)^2 (5 a - 1) c F^2  \mathbf{p}_{26}^1,\,\,
	\text{and } \,\,
	\mathbf{g}_{26}^2 = -(a+1)^2 (5a-1) c F^2 \mathbf{p}_{26}^2,
	\]
	where
	\small\[
	\begin{array}{l}
	\mathbf{p}_{26}^1 = 
	100 (a - 1) (a + 1)^2 (61 a^2 + 186 a + 61) F^4
	
	\\
	\noalign{\medskip}
	\phantom{\mathbf{p}_{26}^1 }
	
	+4 (745 a^7 + 311 a^6 - 24079 a^5 - 20153 a^4 + 116387 a^3 + 
	22285 a^2 - 22397 a - 5515) F^2
	
	\\
	\noalign{\medskip}
	\phantom{\mathbf{p}_{26}^1 }
	
	+(a - 5)^2 (a - 1) (a + 3) (3 a + 1) (305 a^4 + 164 a^3 - 2330 a^2 + 
	164 a + 305) ,
	
	\\
	\noalign{\medskip} 
	
	\mathbf{p}_{26}^2 =  
	6 (a - 1) (115 a^4 + 1372 a^3 + 3538 a^2 + 1372 a + 115) F^2
	
	\\
	\noalign{\medskip}
	\phantom{\mathbf{p}_{26}^2 }
	
	-(245 a^5 - 449 a^4 - 1878 a^3 + 6198 a^2 + 673 a - 565) \gamma^2
	
	\\
	\noalign{\medskip}
	\phantom{\mathbf{p}_{26}^2  }
	
	+575 a^7 - 535 a^6 - 12917 a^5 - 4619 a^4 + 43261 a^3 + 3499 a^2 - 
	10183 a - 2185 .
	\end{array}
	\]\normalsize
	Note that $cF\neq 0$.  Furthermore, from  the conditions $3 a^2 + 4 c^2 + 10 a + 3=0$, $a+c^2\neq0$  and  $c\neq 0$, it follows that $a\in(-3,-\tfrac{1}{3})\setminus\{-1\}$. Hence  $a=-1$ and $a=1/5$ are not admissible values and, from $\mathbf{g}_{26}^1=\mathbf{g}_{26}^2=0$, we get that $\mathbf{p}_{26}^1=\mathbf{p}_{26}^2=0$. In what follows we show that there are no critical metrics in this case. Firstly,  we consider  $\mathbf{p}_{26}^1=0$ as a biquadratic equation in the unknown $F$. A  direct analysis shows that the existence of  a real solution for $F$ restricts the values of the parameter $a$ from   $(-3,-\tfrac{1}{3})\setminus\{-1\}$ to  $\left( \frac{1}{61}\left(-93-8\sqrt{77}\right) , \frac{1}{61}\left(-93+8\sqrt{77}\right) \right)\setminus\{-1\}$.
	Secondly, note that the restriction on the parameter $a$ implies    $(a-1)(115 a^4 + 1372 a^3 + 3538 a^2 + 1372 a + 115)<0$, so  we can clear $F^2$ in $\mathbf{p}_{26}^2=0$  and its positivity leads to 
	\small\[
	\begin{array}{l}
	\xi(\gamma,a) = (245 a^5 - 449 a^4 - 1878 a^3 + 6198 a^2 + 673 a - 565) \gamma^2
	
	\\
	\noalign{\medskip}
	\phantom{\xi(\gamma,a)}
	
	-575 a^7 + 535 a^6 + 12917 a^5 + 4619 a^4 - 43261 a^3 - 3499 a^2 + 10183 a + 2185   <  0.
	\end{array}
	\]\normalsize
	$\xi(\gamma,a)$ is a polynomial for $\gamma$ of the form $\alpha\gamma^2+\beta<0$. But, for values of  $a\in \left( \frac{1}{61}\left(-93-8\sqrt{77}\right) , \frac{1}{61}\left(-93+8\sqrt{77}\right) \right)\setminus\{-1\}$, we check that $\alpha>0$ and $\beta>0$, which leads to a contradiction. Hence, we conclude that there are no critical metrics in this case, which finishes the proof.
\end{proof}

\begin{remark}\rm	
	The metrics in  Families~(1) and (3) in Theorem~\ref{th:heisenberg} are isomorphically homothetic  to left-invariant metrics in the Lie group $\mathbb{R}\ltimes\mathbb{R}^3$  as shown in Remark~\ref{re:xx}. Hence they correspond to algebraic Ricci solitons (i) and (ii) as discussed in Section~\ref{ss:1-3-2}.  Metrics in Family~(2) correspond to (iv) in Section~\ref{ss:1-3-2}.
\end{remark}

\section{Left-invariant $\mathcal{F}_t$-critical metrics on $\mathbb{R}\ltimes\mathbb{R}^3$}\label{se:R}

Let $\mathfrak{g}=\mathbb{R}\ltimes\mathfrak{r}^3$ be a semi-direct extension of the Abelian Lie algebra $\mathfrak{r}^3$. Let $\langle \cdot, \cdot \rangle$ be an inner product on $\mathfrak{g}$ and $\langle \cdot, \cdot \rangle_3$ its restriction to $\mathfrak{r}^3$. 
The algebra of all derivations $\mathfrak{D}$ of $\mathfrak{r}^3$ is $\mathfrak{gl}(3, \mathbb{R})$. If we fix $\mathfrak{D}\in\mathfrak{gl}(3, \mathbb{R})$, there exists a $\langle \cdot, \cdot \rangle_3$-orthonormal basis $\{ \mathbf{v}_1,\mathbf{v}_2,\mathbf{v}_3\}$ of $\mathfrak{r}^3$ where $\mathfrak{D}$ decomposes as a sum of a diagonal matrix and a skew-symmetric matrix. Hence  
\[ \operatorname{der}(\mathfrak{r}^3)=\left \{  \left( \begin{array}{ccc}
\tilde a & -\tilde b & -\tilde c 
\\
\tilde b & \tilde f & -\tilde h 
\\
\tilde c & \tilde h & \tilde p 
\end{array} \right);\, \tilde a, \tilde b, \tilde c, \tilde f, \tilde h, \tilde p \in \mathbb{R} \right \}.
\]
%Now,  the corresponding semi-direct product $\mathfrak{g} =  \mathbb{R}\mathbf{v}_4\ltimes\mathfrak{r}^3 $ is given by 
%\[
%\begin{array}{ll}
%[\mathbf{v}_1, \mathbf{v}_2]=0,  \,\,\,  
%[\mathbf{v}_1, \mathbf{v}_3]=0,   \,\,\,  
%[\mathbf{v}_2,\mathbf{v}_3]=0,
%&
%[\mathbf{v}_4,\mathbf{v}_1] = \tilde a\mathbf{v}_1 + \tilde b\mathbf{v}_2 +\tilde c\mathbf{v}_3,
%
%\\\noalign{\smallskip}
%
%[\mathbf{v}_4,\mathbf{v}_2]= -\tilde b\mathbf{v}_1 + \tilde f\mathbf{v}_2 + \tilde h\mathbf{v}_3,
%&
%[\mathbf{v}_4, \mathbf{v}_3]= -\tilde c\mathbf{v}_1 -\tilde h\mathbf{v}_2+ \tilde p\mathbf{v}_3, 
%\end{array}
%\]
%with respect to some basis $\{ \mathbf{v}_1,\mathbf{v}_2,\mathbf{v}_3,\mathbf{v}_4\}$ so that  $\mathfrak{g} = \mathbb{R}\mathbf{v}_4\oplus\operatorname{span}\{ \mathbf{v}_1,\mathbf{v}_2,\mathbf{v}_3\}$. 
Let $\{\mathbf{v}_1, \mathbf{v}_2, \mathbf{v}_3, \mathbf{v}_4\}$ be a basis of    $\mathfrak{g}=\mathbb{R}\mathbf{v}_4\oplus \operatorname{span}\{\mathbf{v}_1, \mathbf{v}_2, \mathbf{v}_3\}$.
Since $\mathbb{R}\mathbf{v}_4$ is not necessarily orthogonal to $\mathfrak{r}^3$, we set $\tilde  k_i=\langle \mathbf{v}_i, \mathbf{v}_4 \rangle$, for $i = 1,2,3$. Let $\bar e_4= \mathbf{v}_4-\sum_i \tilde k_i\mathbf{v}_i$ and normalize it ($e_4=\bar e_4 \|\bar e_4\|^{-1}$) to get an orthonormal basis $\{ e_1,\dots,e_4\}$ of $\mathfrak{g}=  \mathbb{R} \oplus \mathfrak{r}^3$, where $e_i=\mathbf{v}_i$ for $i=1,2,3$. Now we set $a=-\tilde a \|e_4\|^{-1}$, $f=-\tilde f\|e_4\|^{-1}$, $p=-\tilde p\|e_4\|^{-1}$, $b=-\tilde b\|e_4\|^{-1}$, $c=-\tilde c\|e_4\|^{-1}$ and  $h=-\tilde h\|e_4\|^{-1}$ so that the Lie brackets are given by
\begin{equation}\label{eq:cor_R3}
\begin{array}{ll}
[e_1,e_4]= a e_1 + b e_2 + c e_3,
&
[e_2,e_4]=-b e_1 + f e_2 + h e_3,
\\ 
\noalign{\medskip} 
[e_3,e_4]=- c e_1 -  h e_2 +  p e_3.
\end{array}
\end{equation}

\begin{remark}\rm\label{re:r3}
	The scalar curvature of a left-invariant  metric given by~\eqref{eq:cor_R3}, $\tau=-2 \left(a^2+f^2+p^2+a (f+p)+f p\right)$, vanishes if and only if the metric is flat.
	Moreover, a metric given by \eqref{eq:cor_R3} is Einstein if and only if the self-adjoint part of the derivation is a multiple of the identity, $a=f=p$, in which case 	the sectional curvature is constant $K=-a^2$. 
	
	Additionally to Einstein metrics, those which are homothetic to metrics in \eqref{eq:cor_R3} with $a=b=p=h=0$, or $a=p$ and $b=f=h=0$ are locally symmetric. In the former case they are locally homothetic to a product $\mathbb{R}^2\times \mathbb{H}^2$, whereas in the latter case they are locally conformally flat and locally homothetic to $\mathbb{R}\times \mathbb{H}^3$.
\end{remark}

\begin{remark}\rm\label{re:iso-R3}
	Left-invariant metrics \eqref{eq:cor_R3} are determined by a vector $(a,f,p,b,c,h)\in\mathbb{R}^6$.
	The isometry  
	$( e_1, e_2, e_3, e_4)\mapsto(e_2,e_1,e_3,e_4)$
	shows that $(a,f,p,b,c,h)\sim (f,a,p,-b,h,c)$. Analogously, the isometry
	$( e_1, e_2, e_3, e_4)\mapsto(e_3,e_2,e_1,e_4)$ gives $(a,f,p,b,c,h)\sim (p,f,a,-h,-c,-b)$  and the isometry 
	$(e_1, e_2, e_3, e_4)\mapsto(e_1,e_3,e_2,e_4)$ shows that $(a,f,p,b,c,h)\sim (a,p,f,c,b,-h)$.
\end{remark}

\begin{theorem}\label{th:criticas-R3}
	A non-symmetric left-invariant metric on  $\mathbb{R}\ltimes\mathbb{R}^3$ is critical for a quadratic curvature functional $\mathcal{F}_t$  if and only if 
	one of the following holds:
	\begin{itemize}
		\item[(a)] The functional has zero energy and the metric  is  homothetic to a left-invariant metric determined by one of the following
		\begin{enumerate}
			\item 
			$[e_1,e_4]=e_1+ e_3$  and $[e_3,e_4]=-  (e_1+e_3)$.
			In this case, $t=-3$.

			\smallskip
			\item 
			$[e_1,e_4]=e_1+\frac{1}{\sqrt{2}}e_3$, 
			$[e_2,e_4]=-e_2+\frac{1}{\sqrt{2}}  e_3$  and 
			$[e_3,e_4]=-\frac{1}{\sqrt{2}}( e_1+ e_2)$.
			In this case, $t=-\frac{3}{2}$.

			\smallskip
			\item
			$[e_1,e_4]=e_1$, $[e_2,e_4]=f e_2$  and $[e_3,e_4]=p e_3$, where the parameters 			 $\{(f,p)\in\mathbb{R}^2;\, -1\leq f\leq p\leq 1\}\setminus \{(-1,p);\, -1\leq p<0\}$
			and $(f,p)\notin\{ (0,0), (0,1), (1,1) \}$.
			In this case,  $t=-\frac{f^2+p^2+1}{2(f^2+p^2+ f p+f+p+1)}$.

			\smallskip
			\item 
			$[e_1,e_4]=e_1$, $[e_2,e_4]=-(\kappa+\frac{1}{2})e_2+he_3$, $[e_3,e_4]=-he_2+(\kappa-\frac{1}{2})e_3$, 
%			where {\magenta $h>0$ with}
%			$h^2=\frac{\kappa^2(4\kappa^2+3)}{4 \kappa^2-3}$, for $\kappa>\frac{\sqrt{3}}{2}$. 
with $\kappa\in (\frac{\sqrt{3}}{2},+\infty )$. For a fixed $\kappa$, the parameter $h$ is given by the only positive solution of $h^2(4 \kappa^2-3)-\kappa^2(4\kappa^2+3)=0$.
			In this case, $t=-\frac{48\kappa^4-9}{16\kappa^4-9}$.

			\smallskip
			\item 
			$[e_1,e_4]=e_1+c e_3$, $[e_2,e_4]=-(p+1)e_2+h e_3$,    $[e_3,e_4]=-c e_1-h e_2+p e_3$, 
with $p\in\left(\zeta,-1\right)\cup (\frac{1-\sqrt{21}}{10},0 )$, where $\zeta=-1,697464\dots$ is the only real solution of the equation $8p^3+15p^2+3p+1=0$.
For a fixed $p$, the parameters $c$ and $h$ are given by the only positive solutions of 
$$
\begin{array}{l}
c^2(p+2)(5p^2-p-1)+(2p+1)(8p^3+15p^2+3p+1)=0,
\\
h^2(p+2)(5p^2-p-1)-(p+1)(p-1)(5p^3+12p^2+1)=0.
\end{array}
$$
In this case, $t=-\frac{30p^4-3p^2-6p-3}{2(p^2+p+1)(5p^2-p-1)}$.
	
		\end{enumerate}

		\smallskip
			
		\item[(b)] The energy of the functional is non-zero and the metric is homothetic to one of the following:
		\begin{enumerate}
			\item[(6)] 
			$[e_1,e_4]=\tfrac{1}{3} e_1$, 
			$[e_2,e_4]=f e_2 + h e_3$, 
			$[e_3,e_4]=-h e_2 - (f-\tfrac{2}{3} )e_3$, 
%			where $h> 0$, {\magenta $f>0$} and $36 (f^2 - h^2) - 24 f - 5=0$. 
with $h\in(0,+\infty)$. For a fixed $h$, the parameter $f$ is given by the only positive solution of 
$36 (f^2 - h^2) - 24 f - 5=0$.
			In this case, $t=-\frac{36h^2+5}{12h^2+11}$ and the energy is given by $\energy_t = -\frac{16}{3}h^2$.
	
			\smallskip			
			\item[(7)] 
			$[e_1,e_4]=a e_1 + b e_2$, 
			$[e_2,e_4]= -b e_1 +\tfrac{1}{3} e_2 +  b e_3$
			and 
			$[e_3,e_4]=- b e_2- (a-\tfrac{2}{3} ) e_3$,
			%where {\magenta $a>0$}, $b>0$ and  $9 a^2 - 18 b^2 - 6 a - 8=0$.
			with $b\in(0,+\infty)$. For a fixed $b$, the parameter $a$ is given by the only positive solution of $9 a^2 - 18 b^2 - 6 a - 8=0$.
			In this case, $t=-\frac{18b^2+7}{12b^2+10}$ and the energy is given by	$\energy_t=-\frac{8}{3}b^2$.
		
			\end{enumerate}
	\end{itemize}
	Moreover, metrics in Families~(1)--(3) are algebraic Ricci solitons, while metrics corresponding to Families~(4)--(7) are not.
\end{theorem}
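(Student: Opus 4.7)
The strategy mirrors that of the proofs of Theorems~\ref{th:non-solvable}, \ref{th:E(1,1)} and \ref{th:heisenberg}: the critical condition is equivalent to the vanishing of the symmetric $(0,2)$-tensor
$$\mathfrak{F}_t=-\Delta\rho+\tfrac{1}{2}(\|\rho\|^2+t\tau^2)g-2R[\rho]-2t\tau\rho,$$
and, evaluated in the basis of \eqref{eq:cor_R3}, each component $\mathfrak{F}_t(e_i,e_j)$ becomes a polynomial in the six structure constants $(a,f,p,b,c,h)$ together with $t$. The plan is to case split on the support of the skew-symmetric parameters $\{b,c,h\}$, using the reshuffling isometries of Remark~\ref{re:iso-R3} to collapse equivalent branches, and to solve directly where the equations factor transparently, invoking Gröbner bases (lexicographic order) in the remaining branches.

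I would begin with the diagonal self-adjoint case $b=c=h=0$. After rescaling to normalize a nonzero diagonal entry (say $a=1$; the case $a=f=p=0$ is flat), the equations $\mathfrak{F}_t(e_i,e_j)=0$ reduce to a small polynomial system in $(f,p,t)$ whose non-symmetric solutions, once one discards the Einstein and locally symmetric metrics of Remark~\ref{re:r3} together with the degenerate triples $(f,p)\in\{(0,0),(0,1),(1,1)\}$, satisfy $t=-\tfrac{f^2+p^2+1}{2(f^2+p^2+fp+f+p+1)}$. Restricting $(f,p)$ via Remark~\ref{re:iso-R3} to $-1\le f\le p\le 1$ and removing the $f=-1$, $p<0$ locus (which turns out to be symmetric) yields exactly Family~(3), whose energy is then checked to vanish identically.

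For the non-self-adjoint branches I would subdivide according to whether exactly one, exactly two, or all three of $\{b,c,h\}$ are nonzero, again using Remark~\ref{re:iso-R3} to normalize which of them is the distinguished nonzero one. In each subcase I would normalize the homothety (say $a=1$, or introduce an auxiliary variable $\tilde a$ with $a\tilde a-1$ to enforce non-vanishing) and compute a Gröbner basis of the ideal generated by $\{\mathfrak{F}_t(e_i,e_j)\}$ together with the defining relations of the branch; analyzing the leading polynomials either pins the parameters down to one of the stated families or forces an inconsistency. This should produce Families~(1), (4), (6) and (7) from the ``one nonzero'' branch, Families~(2) and (5) from the ``two nonzero'' branch, and no further critical metrics from the ``three nonzero'' branch.

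The hard step is expected to be the ``three nonzero'' branch, where the ideal lives in seven variables and the Gröbner basis will be bulky; the aim there is to locate a single basis polynomial whose factorization, or whose sign on the admissible region, forces an inconsistency, in the spirit of the positivity argument used for the case $2a^2+c^2+5a+2=0$ in the proof of Theorem~\ref{th:heisenberg}. A secondary obstacle, already visible in Remarks~\ref{re:xx} and \ref{re:xxx}, is the bookkeeping of explicit changes of basis needed to recognize when an apparently new critical metric is isomorphically homothetic to one already listed; tracking these identifications is what will allow us to confirm that the algebraic Ricci solitons of Section~\ref{ss:1-3-2} occupy precisely Families~(1)--(3), that Families~(4)--(7) contain no Ricci solitons, and that the stated ranges of $t$ together with the energy values $\energy_t$ are correct and separate the families via the homothetic invariants $\{t,\|\rho\|^2\tau^{-2},\|R\|^2\tau^{-2},\|\nabla\rho\|^2\tau^{-3},\|\nabla R\|^2\tau^{-3}\}$ used in Remark~\ref{re:H3-grafico}.
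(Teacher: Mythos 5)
Your toolkit is the right one---the tensor $\mathfrak{F}_t$, the isometries of Remark~\ref{re:iso-R3}, direct factorization where possible and Gr\"obner bases elsewhere---but you organize the case analysis around the skew-symmetric parameters $\{b,c,h\}$, whereas the paper's primary split is on the self-adjoint part $\operatorname{diag}[a,f,p]$: two eigenvalues equal; all distinct with $a+f+p=0$; all distinct with $a+f+p\neq 0$ and $bch=0$; all distinct with $a+f+p\neq0$ and $bch\neq0$. That choice is not cosmetic. The paper first solves $\mathfrak{F}_t(e_4,e_4)=0$ for $t$ (equation (\ref{eq: R3-t})), which is possible exactly because the coefficient of $t$ is a multiple of $(a-f)^2+(a-p)^2+(f-p)^2$, and this yields the closed energy formula (\ref{eq: R3-energia}) that cleanly separates the zero-energy families in (a) from those in (b); it also supplies the hand-checkable identities that dispose of most branches without computer algebra, e.g.\ $h\,\mathfrak{\overline F}(e_1,e_3)-c\,\mathfrak{\overline F}(e_2,e_3)=-2b(c^2+h^2)(2a+p)$ when $a=f$, and $\mathfrak{\overline F}(e_1,e_1)=-18bch(a^2+p^2+ap)$ when $a+f+p=0$. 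Your branches mix these diagonal configurations, so the normalization $a+f+p=1$ and the auxiliary variables enforcing $a\neq f\neq p\neq a$ are unavailable, and correspondingly more weight falls on Gr\"obner computations whose feasibility you are taking on faith; in particular the hope that ``a single basis polynomial'' kills the all-nonzero branch is optimistic---the paper needs a cascade of ideals and separate sign arguments for the factors $3a+3f-2$, $\mathbf{p}$ and $\mathbf{q}$.

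One concrete error: Family~(7) does not live in your ``exactly one of $b,c,h$ nonzero'' branch. Comparing its brackets with \eqref{eq:cor_R3} gives $c=0$ but $b\neq0$ and $h=b\neq0$, so it belongs with Families~(2) and (5) in the two-nonzero branch; in the paper it arises in the case $bch=0$, $c=0$, $3f-1=0$, $b\neq0$, where the equations force $b^2=h^2$. If you run the one-nonzero branch expecting to find it there, you will either miss it or misread the inconsistency you obtain. With that assignment corrected, and with the determination of $t$ and $\energy_t$ from $\mathfrak{F}_t(e_4,e_4)$ made explicit at the outset, your reorganized case analysis should reach the same list.
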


\begin{remark}\rm\label{re:grafico-R3}
	The range of the parameter $t$    in each family of  Theorem~\ref{th:criticas-R3} is indicated in  Figure~\ref{figura-R}. The energy vanishes in Families~(1)--(5), whereas $\energy_t<0$ otherwise.
	\begin{figure}[h]
	\begin{center}
		\begin{tikzpicture}
		% Linea eje t
		%\draw[thick] (-5.7,0) -- (-4.9-0.25,0); 
		\draw[thick,-latex]   (-4.9-0.15,0) -- (5.7,0) node[right] {$t$}; 
		%\draw[thick,-latex]   (-5.7,0) -- (5.7,0) %node[right] {$t$};  
		\foreach \i/\n in 
		{
			{-4.05}/$\vartheta$,
			{-2.8}/$-3$, 
			{-0.15}/$-\frac{3}{2}$, 
			{1.05}/$-1$, 
			{1.95}/$-\frac{7}{10}$, 
			{2.95}/$-\frac{5}{11}$, 
			{3.75}/$-\frac{1}{3}$,
			{4.45}/$-\frac{1}{4}$
		}
		{\draw (\i,-.2)--(\i,.2);}
		
		\foreach \i/\n in 
		{
			{-4.05}/$\vartheta$,
			{-2.8}/$-3$,  
			{1.05}/$-1$, 
		}
		{\draw  (\i,.2) node[above] {\footnotesize \n};}

		\foreach \i/\n in 
		{
			{-0.15}/$-\frac{3}{2}$, 
			{1.95}/$-\frac{7}{10}$, 
			{2.95}/$-\frac{5}{11}$, 
			{3.75}/$-\frac{1}{3}$,
			{4.45}/$-\frac{1}{4}$
		}
		{\draw  (\i,.12) node[above] {\footnotesize \n};}
		
		\foreach \i/\n in 
		{
			{-4.05}/$\vartheta$,
			{-2.8}/$-3$,  
			{-0.15}/$-\frac{3}{2}$, 
			{1.05}/$-1$, 
			{1.95}/$-\frac{7}{10}$, 
			{2.95}/$-\frac{5}{11}$, 
			{4.45}/$-\frac{1}{4}$
		}{
			\draw[dashed] (\i,-.25)--(\i,-3.75);
		}

		\draw (-5.45,-0.45) node {\footnotesize (1) };
		\filldraw [color=\ColorECeroARS,fill=\ColorECeroARS] (-2.8,-0.45) circle (2pt);

		\draw (-5.45,-0.95) node {\footnotesize (2) };
		\filldraw [color=\ColorECeroARS,fill=\ColorECeroARS] (-0.15,-0.95) circle (2pt);

		\draw (-5.45,-1.45) node {\footnotesize (3) };
		
		\draw[densely dotted,color=\ColorECeroARS] 	
			(1.05,-1.45-\YSepInfMetricas/4)--(4.45,-1.45-\YSepInfMetricas/4);
				\draw[densely dotted,color=\ColorECeroARS] 	
			(1.05,-1.45-\YSepInfMetricas/8)--(4.45,-1.45-\YSepInfMetricas/8);
				\draw[densely dotted,color=\ColorECeroARS] 	
			(1.05,-1.45-\YSepInfMetricas/3)--(4.45,-1.45-\YSepInfMetricas/3);
%		\draw[densely dotted,color=\ColorECeroARS] 
%			(1.05,-1.45)--(4.45,-1.45);		
%		\draw[densely dotted,color=\ColorECeroARS] 
%			(1.05,-1.45+\YSepInfMetricas/4)--(4.45,-1.45+\YSepInfMetricas/4);
%		\draw[densely dotted,color=\ColorECeroARS] 
%			(1.05,-1.45+\YSepInfMetricas/2)--(4.45,-1.45+\YSepInfMetricas/2);
	
		\draw[color=\ColorECeroARS] (1.05,-1.45-\YSepInfMetricas/2)--(4.45,-1.45-\YSepInfMetricas/2);
		\filldraw [color=\ColorECeroARS,fill=\ColorECeroARS] (1.05,-1.45-\YSepInfMetricas/2) circle (2pt);
		\filldraw [color=\ColorECeroARS,fill=white] (4.45,-1.45-\YSepInfMetricas/2) circle (2pt);

		\draw (-5.45,-1.95) node {\footnotesize (4) };
		\draw[color=\ColorECeroNoARS,latex-=] (-4.8,-1.95)--(-2.8,-1.95);
		\filldraw [color=\ColorECeroNoARS,fill=white] (-2.8,-1.95) circle (2pt);

		\draw (-5.45,-2.45) node {\footnotesize (5) };
		\draw[color=\ColorECeroNoARS,latex-=] (-4.8,-2.45-\YSepDosMetricas)--(-0.15,-2.45-\YSepDosMetricas);
		\filldraw [color=\ColorECeroNoARS,fill=white] (-0.15,-2.45-\YSepDosMetricas) circle (2pt);
		
		\draw[color=\ColorECeroNoARS] 			
			(-4.05,-2.45+0.7*\YSepDosMetricas)--(-2.8,-2.45+0.7*\YSepDosMetricas);
		\filldraw [color=\ColorECeroNoARS,fill=white] 		
			(-4.05,-2.45+0.7*\YSepDosMetricas) circle (2pt);		
		\filldraw [color=\ColorECeroNoARS,fill=white] 
			(-2.8,-2.45+0.7*\YSepDosMetricas) circle (2pt);

		\draw (-5.45,-2.95) node {\footnotesize (6) };
		\draw[color=\ColorENegativa] (-2.8,-2.95)--(2.95,-2.95);
		\filldraw [color=\ColorENegativa,fill=white] (-2.8,-2.95) circle (2pt);
		\filldraw [color=\ColorENegativa,fill=white] (2.95,-2.95) circle (2pt);

		\draw (-5.45,-3.45) node {\footnotesize (7) };
		\draw[color=\ColorENegativa] (-0.15,-3.45)--(1.95,-3.45);
		\filldraw [color=\ColorENegativa,fill=white] (-0.15,-3.45) circle (2pt);
		\filldraw [color=\ColorENegativa,fill=white] (1.95,-3.45) circle (2pt); 
		\end{tikzpicture}
		\caption{Range of the parameter $t$ for non-symmetric homogeneous $\mathcal{F}_t$-critical metrics on $\mathbb{R}\ltimes\mathbb{R}^3$.}
		\label{figura-R}
	\end{center}
\end{figure}
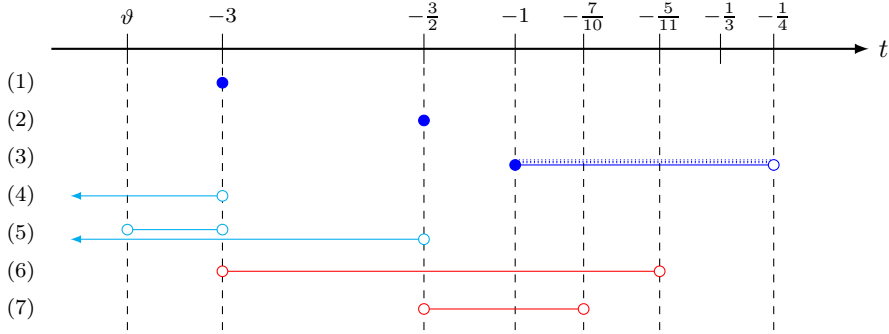

\noindent For the special choice of $ f=(\sqrt{p}-1)^2$ with $\tfrac{1}{4}\leq p< 1$ in Family~(3), the corresponding metrics are non-homothetic Bach-flat Ricci solitons (see \cite{CL-GM-GR-GR-VL} and Remark~\ref{re:Bach-llanas}).

Proceeding as in previous sections, the set of homothetic invariants $\{t,\|\rho\|^2\, \tau^{-2}$, $\| R\|^2\, \tau^{-2}$, $\|\nabla\rho\|^2\,\tau^{-3}$, $\|\nabla R\|^2\tau^{-3}\}$ distinguishes classes among metrics given in Theorem~\ref{th:criticas-R3}, Theorem~\ref{th:heisenberg} and Theorem~\ref{th:E(1,1)} with the exception of the algebraic Ricci solitons previously discussed in Section~\ref{ss:1-3-2}.
Furthermore, in Theorem~\ref{th:criticas-R3}, taking into account the restrictions on the parameters in each case, it follows that there are no homotheties between metrics in different families and, moreover,  that different values of the parameters in any of the families   correspond to metrics which are not homothetic.

Moreover, for any $t\in[-1,-\frac{1}{4})$ there is an infinite number of homothetically inequivalent $\mathcal{F}_t$-critical metrics in Family~(3).  For each $t\in(\vartheta,-3)$, where $\vartheta=-3,753199\dots$ is the only real solution of $192 p^3+   1152  p^2+1865 p+923=0$, Family~(5) provides two non homothetic $\mathcal{F}_t$-critical metrics.
However, for any other admissible value of $t$ there is a single (up to homothety) $\mathcal{F}_t$-critical metric in each family.
\end{remark}

\begin{proof}	
	A left-invariant metric is $\mathcal{F}_t$-critical if and only if the symmetric $(0,2)$-tensor field 
	$
	\mathfrak{F}_t=
	-\Delta \rho 
	+\frac{1}{2} (\|\rho\|^2 +t\tau^2) g 
	-2 R[\rho] 
	-2t\tau\rho
	$
	vanishes, where the components $\mathfrak{F}_t(e_i,e_j)$ are polynomials on $t$ and  the structure constants determining the metric~\eqref{eq:cor_R3}.
	In particular, we have 
	\[
	\begin{array}{l}
	\mathfrak{F}_t(e_4,e_4) 
	=
	\frac{1}{2}\left( (a - f)^2 + (a - p)^2 + (f - p)^2 \right) 
	\left(\tau t - (a^2+f^2+p^2)\right)
	
	\\ 
	\noalign{\medskip}
	\phantom{\mathfrak{F}_t(e_4,e_4) 
		=}	
	
	-3 b^2 (a-f)^2-3 c^2 (a-p)^2-3 h^2 (f-p)^2 ,
	\end{array}
	\]  
	where  the coefficient of $t$, $\frac{1}{2}\tau \left ((a - f)^2 + (a - p)^2 + (f - p)^2 \right)$, 
	vanishes if and only if the metric is Einstein (see Remark~\ref{re:r3}). Thus, in the non-Einstein case,
	\begin{equation}\label{eq: R3-t}
	t =   \tfrac{ \left(a^2+f^2+p^2\right) \left( (a - f)^2 + (a - p)^2 + (f - p)^2 \right)
		+6 b^2 (a-f)^2+6 c^2 (a-p)^2+6 h^2 (f-p)^2 }
	{\tau  \left( (a - f)^2 + (a - p)^2 + (f - p)^2 \right) }
	\end{equation}
	and the energy is given by
	\begin{equation}\label{eq: R3-energia}
	\energy_t = 
	-\tfrac{8 \left(a+f+p\right)^2 \left(b^2 (a-f)^2 + c^2 (a-p)^2 + h^2 (f-p)^2 \right)}
	{ (a - f)^2 + (a - p)^2 + (f - p)^2 }.
	\end{equation}
	Let $\mathfrak{F}$ be  the symmetric $(0,2)$-tensor field $\mathfrak{F}_t$ for the value of $t$ given by~(\ref{eq: R3-t}). In order to solve the system  $\{\mathfrak{F}(e_i,e_j)=0\}$, we consider the self-adjoint part of the derivation given by $\operatorname{diag}[a,f,p]$ and split the analysis into two cases: two of the parameters are equal or the three of them are different (notice that the metric is Einstein if $a=f=p$). Note that in the former case one may assume (see Remark~\ref{re:iso-R3}) that $a=f\neq p$.
	In the latter case, where $a$, $f$ and $p$ are distinct, we analyze separately derivations with trace-free self-adjoint part ($a+f+p=0$) and derivations with $a+f+p\neq0$. Moreover, if $a+f+p\neq0$ we further distinguish whether parameters $b$, $c$ and $h$ determining the skew-symmetric part of the derivation satisfy $bch=0$ or $bch\neq 0$.

	\subsection{\boldmath  Case  $a=f\neq p$\unboldmath}
	If $a=f\neq p$  we can simplify   the non-zero factor $a-p$ by considering the polynomials $\frac{1}{a-p}\mathfrak{F}(e_i,e_j)$. 	
	Denoting by   $\mathfrak{\overline F}(e_i,e_j)$  the polynomials $\mathfrak{F}(e_i,e_j)$ including the simplifications above, the non-zero components of $\mathfrak{\overline F}$ are determined by
	\[
	\begin{array}{l}
		\mathfrak{\overline F}(e_1,e_1)=-\mathfrak{\overline F}(e_2,e_2)= 
		-2 (c^2 - h^2) (2 a+p)-6 b c h,
		\\ \noalign{\medskip}
		\mathfrak{\overline F}(e_1,e_2)=
		3b (c^2- h^2)-4 c h (2 a+p),
		\\ \noalign{\medskip}
		\mathfrak{\overline F}(e_1,e_3)=
		c(b^2-2 c^2-2  h^2)-3 a c (a+2 p)-2 b h (2 a+p) ,
		
		\\ \noalign{\medskip}
		\mathfrak{\overline F}(e_2,e_3)=
		h(b^2 -2 c^2 -2 h^2)-3 a h (a+2 p)+2 b c (2 a+p),
	\end{array}
	\]
	from where it follows that $h\,  \mathfrak{\overline F}(e_1,e_3) - c\,  \mathfrak{\overline F}(e_2,e_3) = 
	-2 b (c^2+h^2) (2a+p)$. Thus, $c=h=0$, or  $b=0$, or $p=-2a$. We analyze the three cases separately.

	\subsubsection{Case $c=h=0$} Clearly $\mathfrak{\overline F}=0$ and  (\ref{eq: R3-energia}) implies that  the energy vanishes.  Moreover,   for $a=0$  the metric is locally symmetric and locally isometric to a product $\mathbb{R}^2\times N^2(\kappa)$, with $\kappa<0$ (see Remark~\ref{re:r3}), which is a rigid gradient Ricci soliton. Now, for  $a\neq 0$ we work with the homothetic metric determined by $a=1$, which corresponds to 
	\[
	[e_1,e_4]=  e_1+ b e_2,
	\quad 
	[e_2,e_4]=-b  e_1 +   e_2,
	\quad 
	[e_3,e_4]=p e_3,
	\]
	and a straightforward calculation shows that the sectional curvature is independent of the structure constant $b$. Hence it follows from the work of Kulkarni (see \cite{Kulkarni}) that the metric is homothetic to a metric with $b=0$ and Equation~(\ref{eq: R3-t}) implies that $t=-\frac{p^2+2}{2(p^2+2p+3)}$. Moreover, $\operatorname{Ric}+(p^2+2)\operatorname{Id}$ is a derivation determining an algebraic Ricci soliton. This is a subfamily of Family~(3).

	\subsubsection{Case $b=0$ and either $c\neq 0$ or $h\neq 0$} We have
	\[
	\mathfrak{\overline F}(e_1,e_1)= -2 (c^2 - h^2) (2 a + p),
	\quad
	\mathfrak{\overline F}(e_1,e_2)= -4 c h (2 a + p) ,
	\]
	which imply $p=-2a$. Note that, by (\ref{eq: R3-energia}), the energy is zero. Moreover, since  $a\neq p$, $a$ cannot vanish so we set $a=1$ and work with a homothetic metric.  Now $\mathfrak{\overline F}=0$ reduces to 
	$2 (c^2 + h^2)=9$ and the left-invariant metric is given by
	\[
	[e_1,e_4]=  e_1+ c e_3,
	\quad 
	[e_2,e_4]=  e_2+h e_3,
	\quad 
	[e_3,e_4]= - c e_1-he_2 - 2e_3,
	\]	
	while (\ref{eq: R3-t}) gives  $t=-13/4$.
	Note that the isometry $e_1\mapsto e_2$ interchanges $c$ and $h$. Moreover $e_2\mapsto-e_2$ interchanges $h$ and $-h$ and the isometry $e_3\mapsto-e_3$ interchanges the sign  of $c$ and also the sign of $h$. Therefore one may assume $0\leq c\leq 3/2$  and set $h=\frac{1}{\sqrt{2}}\sqrt{9-2c^2}$.
Now, 
		considering the orthonormal basis  
		\[
		\bar e_1=\tfrac{-\sqrt{2}}{3} \left(
		h e_1-c e_2\right),
		\quad
		\bar e_2= - e_3,
		\quad
		\bar e_3=\tfrac{\sqrt{2}}{3} \left(
		c e_1 + h e_2\right),
		\quad
		\bar e_4=  e_4,
		\]
		the  non-zero brackets are given by 
		\[
		[\bar e_1,\bar e_4]= \bar e_1,
		\qquad
		[\bar e_2,\bar e_4]= -2\bar e_2 +\tfrac{3}{\sqrt{2}}\bar e_3, 
		\qquad
		[\bar e_3,\bar e_4]= -\tfrac{3}{\sqrt{2}}\bar e_2 + \bar e_3.
		\]
		These metrics are homothetic  to  that in Family~(4) with $\kappa=3/2$ and $h= 3/\sqrt{2}$.

	\subsubsection{Case  $p=-2a$, $b\neq 0$ and either $c\neq 0$ or $h\neq 0$} We have
	\[
	\mathfrak{\overline F}(e_1,e_1)= -6 b c h,
	\quad
	\mathfrak{\overline F}(e_1,e_2)= 3 b (c^2 - h^2)  ,
	\]
	so there are no critical metrics in this case.

	\subsection{\boldmath  Case  $a\neq f\neq p$, $a\neq p$, with $a+f+p=0$\unboldmath}\label{case 5.2}
	Computing the expression in \eqref{eq: R3-energia}, we check that the energy vanishes. 
	
	Proceeding as in the previous case,  we remove the denominators in $\mathfrak{F}(e_i,e_j)$   multiplying by $\frac{1}{2}\left( (a - f)^2 + (a - p)^2 + (f - p)^2 \right)$ and  simplify the non-zero factors $f-p$, $a-p$ and $a-f$ by considering 
	$\frac{1}{f-p}\mathfrak{F}(e_1,e_1)$, $\frac{1}{a-p}\mathfrak{F}(e_2,e_2)$ and $\frac{1}{a-f}\mathfrak{F}(e_3,e_3)$.
	Denoting by   $\mathfrak{\overline F}(e_i,e_j)$  the polynomials $\mathfrak{F}(e_i,e_j)$ including those simplifications above in each case, the non-zero components  are determined by $\mathfrak{\overline F}(e_1,e_1)$, $\mathfrak{\overline F}(e_2,e_2)$, $\mathfrak{\overline F}(e_3,e_3)$, $\mathfrak{\overline F}(e_1,e_2)$, $\mathfrak{\overline F}(e_1,e_3)$ and $\mathfrak{\overline F}(e_2,e_3)$. We observe that $\mathfrak{\overline F}(e_1,e_1)=-\mathfrak{\overline F}(e_2,e_2)=\mathfrak{\overline F}(e_3,e_3)$, so    $\mathfrak{\overline F}$ vanishes if and only if the components
	$\mathfrak{\overline F}(e_1,e_1)$, $\mathfrak{\overline F}(e_1,e_2)$, $\mathfrak{\overline F}(e_1,e_3)$ and $\mathfrak{\overline F}(e_2,e_3)$ are zero. 	
	Setting $f=-a-p$ we have
	\[
	\mathfrak{\overline F}(e_1,e_1) = -18 b c h (a^2+ p^2 + a p),
	\]
	which shows that the skew-symmetric part of the derivation satisfies $bch=0$.
	By Remark~\ref{re:iso-R3} we can assume $b=0$ and $a\neq 0$. Hence we work with a representative in the homothetic class which has $a=1$ and Lie brackets given by
	$$
	[e_1,e_4]=e_1+ce_3,
	\quad 
	[e_2,e_4]=-(p+1)e_2+he_3,
	\quad 
	[e_3,e_4]=-ce_1-he_2+pe_3.
	$$
	Moreover,    $p\notin\{-2,-\frac{1}{2}, 1\}$ since  we are assuming that  $a=1$, that $f=-p-1$ and that $a$, $f$ and $p$ are different.  
	Now, the only non-zero components of the tensor field $\mathfrak{\overline F}$ are
	\begin{equation}\label{eq: R3 Fbar case 2}
	\begin{array}{l}
	\mathfrak{\overline F}(e_1,e_3)=
	-3c \left(
	2   (p-1)(p^2+4p+1) c^2
	- ( p^3-9p^2-15p-4) h^2 
	\right.
	\\
	\noalign{\medskip}
	\phantom{\mathfrak{\overline F}(e_1,e_3)=-3c (}
	
	\left.
	+(p-1)^3(p^2+p+1) \right),
	
	\\
	\noalign{\medskip}
	\mathfrak{\overline F}(e_2,e_3)=
	- 3h\left(
	( p^3+9p^2+3p-4) c^2
	- 2(4 p^3+ 6 p^2-1) h^2
	\right.
	\\
	\noalign{\medskip}
	\phantom{	\mathfrak{\overline F}(e_2,e_3)=- 3h (}
	\left.
	+(2 p+1)^3 (p^2+p+1)
	\right) .
	\end{array}
	\end{equation}
	Next we analyze different  possibilities depending on whether or not $c$ or $h$ vanish.

	\subsubsection{Case $c=h=0$} 
	In this case  $\mathfrak{\overline F}$ vanishes identically and (\ref{eq: R3-t}) gives $t=-1$. Moreover,  $\operatorname{Ric}+2( p^2+p+1)$ is a derivation determining an algebraic Ricci soliton. 
	The metric is determined by brackets
	$$
	[e_1,e_4]=e_1,
	\quad 
	[e_2,e_4]=-(p+1)e_2,
	\quad 
	[e_3,e_4]=pe_3,
	$$
	which correspond to those in Family~(3) for $f=-(p+1)$. As an application of Remark~\ref{re:iso-R3}, the range of the parameters can be specialized as in that family (see $\S$\ref{paragraph:5311} below).
	%Moreover, they have a representative in the homothetic class of metrics in (3) as an application of the isometries given in Remark~\ref{re:iso-R3}.

	\subsubsection{Case $c=0$, $h\neq 0$} 
	If $c=0$  then the only non-zero component of $\mathfrak{\overline F}$ is
	\[
	\begin{array}{l}
	\mathfrak{\overline F}(e_2,e_3)=3 h (2 p+1) \left(
	h^2 (4p^2+4p-2)
	-(2 p+1)^2 (p^2+p+1)
	\right) 
	\end{array}
	\]
	and therefore  
	$h^2 (4p^2+4p-2)
	-(2 p+1)^2 (p^2+p+1)=0$.	
	Setting $\kappa=p+\frac{1}{2}$, this equation becomes 
	$\kappa^2(4\kappa^2+3)-h^2 (4 \kappa^2-3)=0$. Hence $\kappa>\sqrt{3}/2$ or $\kappa<-\sqrt{3}/2$  and
	$$
	[e_1,e_4]=e_1,
	\quad [e_2,e_4]=-\left(\kappa+\tfrac{1}{2}\right)e_2+he_3,\quad [e_3,e_4]=-he_2+\left(\kappa-\tfrac{1}{2}\right)e_3.
	$$	
	Moreover, $e_2\mapsto e_3$ determines an isometry which interchanges $(\kappa,h)$ and $(-\kappa,-h)$, and $e_3\mapsto-e_3$ is an isometry which interchanges $(\kappa,h)$ and $(\kappa,-h)$.
	Hence we restrict to $h>0$ and $\kappa>\sqrt{3}/2$. Also, since $p\neq 1$, we have $\kappa \neq 3/2$.
	As a consequence, using (\ref{eq: R3-t}), we get  $t=-\frac{48\kappa^4-9}{16\kappa^4-9}\in(-\infty,-3)\setminus\{-\frac{13}{4}\}$. 
	This corresponds to Family~(4) for  $\kappa \neq3/2$.	
	
	\subsubsection{Case $c\neq0$, $h= 0$} 
	If $f=-p-1$ does not vanish then, by Remark~\ref{re:iso-R3}, this case reduces to the previous one. Hence we set $p=-1$  and  observe that the only non-zero component of $\mathfrak{\overline F}$ is $\mathfrak{\overline F}(e_1,e_3)=-24 c(c^2-1)$. This implies $c^2=1$ and
	$$
	[e_1,e_4]=e_1+c e_3,
	\quad 
	[e_3,e_4]=-c e_1-e_3,\quad
	\text{with } c^2=1,
	$$
	is $\mathcal{F}_{-3}$-critical. Furthermore, the isometry $e_3\mapsto-e_3$ changes the sign of $c$, so we set $c=1$, which corresponds to Family~(1). Moreover, $\operatorname{Ric}+6\operatorname{Id}$ is a derivation determining an algebraic Ricci soliton.

	\subsubsection{Case $c h\neq0$} 
	The parameters $c$ and $h$ are determined by the expressions $\mathfrak{\overline F}(e_1,e_3)$ and $\mathfrak{\overline F}(e_2,e_3)$ in (\ref{eq: R3 Fbar case 2}).  Moreover,  $e_1\mapsto -e_1$ and $e_2\mapsto-e_2$ determine isometries which change the signs of $c$ and $h$, respectively. Hence we assume $c>0$ and $h>0$. 
	
	We recall that $p\notin\{-2,-\frac{1}{2}, 1\}$. If $5p^2-p-1\neq 0$, then
	\begin{equation}\label{eq:relationcandh}
	c^2=-\tfrac{(2 p+1) (8 p^3+15p^2+3p+1)}
	{(p+2) (5p^2-p-1)}
	\quad\text{and}\quad 
	h^2=\tfrac{(p-1) (p+1)  (5p^3+12p^2+1 )}{(p+2) (5p^2-p-1)}
	\end{equation}
and these metrics are  $\mathcal{F}_t$-critical. Moreover,  they are algebraic Ricci solitons if and only if $p=0$, in which case $\operatorname{Ric}+3\operatorname{Id}$ is a derivation,  $c=h=1/\sqrt{2}$, and $t=-3/2$ (see~(\ref{eq: R3-t})). This corresponds to Family~(2). Now, if $p\neq 0$,  (\ref{eq: R3-t}) gives $t=-\frac{30p^4-3p^2-6p-3}{2(p^2+p+1)(5p^2-p-1)}\in(-\infty,-\frac{3}{2})$ which corresponds to Family~(5). Moreover, the parameter $p$ satisfies $p\neq 0$, $p\in (-\infty,\zeta_1)\cup(\zeta_2,-1)\cup(\frac{1-\sqrt{21}}{10},\frac{1+\sqrt{21}}{10})$, where $\zeta_1$ and $\zeta_2$ are the only real solutions of the equations  $5p^3+12p^2+1=0$ and $8p^3+15p^2+3p+1=0$, respectively.	
	Let $\varepsilon_{(p+1)}=\pm 1$ denote the sign of $p+1$. The homothety $(e_1,e_2,e_3,e_4)\mapsto \frac{1}{|p+1|}(e_2, e_1, -\varepsilon_{(p+1)} e_3,-\varepsilon_{(p+1)} e_4)$ transforms the parameter $p$ into $-\frac{p}{p+1}$, interchanges $c\mapsto \frac{\varepsilon_{(p+1)} h}{p+1}$ and $h\mapsto \frac{\varepsilon_{(p+1)} c}{p+1}$ in accordance with the expressions in \eqref{eq:relationcandh}, as the solutions of $5p^3+12p^2+1=0$ and $8p^3+15p^2+3p+1=0$ are also interchanged. Hence, the transformation $p\mapsto -\frac{p}{p+1}$ maps the interval $(-\infty,\zeta_1)$ into $(\zeta_2,-1)$. Moreover, it maps $(0,\frac{1+\sqrt{21}}{10})$ into $(\frac{1-\sqrt{21}}{10},0)$, 	
	from where it follows that $p$ may be restricted to $(\zeta_2,-1)\cup(\frac{1-\sqrt{21}}{10},0)$ as stated in Family~(5).

	The   special case above, given by $5p^2-p-1=0$, corresponds to the   values of $p=\frac{1}{10}(1\pm\sqrt{21})$  and a straightforward calculation shows that it does not lead to $\mathcal{F}_t$-critical metrics.

\subsection{\boldmath  Case  $a\neq f\neq p$, $a\neq p$, with $a+f+p\neq 0$ and $bch=0$\unboldmath}\label{ref: R3 c=0}
Since $bch=0$, one of the three parameters is zero. Based on Remark~\ref{re:iso-R3}, we change basis if necessary to fix $c=0$.
First observe, from (\ref{eq: R3-energia}),  that the energy vanishes if and only if $b=h=0$.
	Secondly, after simplifying the components $\mathfrak{F}(e_i,e_j)$ exactly as in Case~\ref{case 5.2}, we have that 
	%$\mathfrak{\overline F}(e_1,e_1)=-\mathfrak{\overline F}(e_2,e_2)=\mathfrak{\overline F}(e_3,e_3)$, so
	$\mathfrak{\overline F}$ is determined by the set of components
	$\{\mathfrak{\overline F}(e_1,e_1)$, $\mathfrak{\overline F}(e_1,e_2)$, $\mathfrak{\overline F}(e_1,e_3)$, $\mathfrak{\overline F}(e_2,e_3)\}$.
	Since $a+f+p\neq 0$ we can work with a homothetic metric satisfying $a+f+p=1$, which together with  $c=0$ leads to
	\[
	\mathfrak{\overline F}(e_1,e_3)=
	-2 b  h (3 f-1) (3 a^2+3 f^2+3 a f-3 a -3 f+1).
	\]
	Note that, by Remark~\ref{re:iso-R3},  the case $h=0$ is equivalent to $b=0$. 
	Moreover, the vanishing of the last factor implies  $a=f=1/3$, which does not satisfy the current assumptions.
	Hence, we analyze the cases $b=0$ and $3f-1=0$.

	\subsubsection{Case $b=0$}
	If $b=0$ then $\mathfrak{\overline F}(e_1,e_1)=2 h^2 (3 a - 1) (a + 2 f - 1)$. Since we are assuming $f\neq p$ and $a+f+p=1$, we have $a + 2 f - 1\neq 0$. Hence,  we consider the following two possibilities:

\smallskip	

\paragraph{\underline{\emph{Case $h=0$}}}\label{paragraph:5311} If $h=0$ then the tensor field $\mathfrak{\overline F}$ vanishes identically.
For convenience, we stop assuming $a+f+p=1$ and renormalize to choose a representative in the homothetic class with $a=1$. Note that this may need a reordering of the basis but is always possible since $c=b=h=0$ and $a\neq f\neq p\neq a$. Thus, the Lie brackets correspond to Family~(3):
		\[
[e_1,e_4]=e_1,
\quad 
[e_2,e_4]=f e_2,
\quad 
[e_3,e_4]=pe_3 .
\]
Now, (\ref{eq: R3-t}) gives  $t=-\frac{f^2+p^2+1}{2(f^2+p^2+ f p+f+p+1)}\in[-1,-\frac{1}{4})$ and  the energy vanishes (see~(\ref{eq: R3-energia})). Moreover,  $\operatorname{Ric}+( f^2+p^2+1)\Id$ is a derivation determining an algebraic Ricci soliton.

Since the Lie brackets are determined by the pair $(f,p)$ in this case, we explore the domain of these two parameters.
		The isometry $e_2\mapsto e_3$ interchanges $(f,p)$ with $(p,f)$, and the isometry $(e_1,e_2,e_3,e_4)\mapsto(e_2,e_1,e_3,-e_4)$ interchanges $(-1,p)$ with $(-1,-p)$. Also, if $f\neq 0$, the homothety $(e_1,e_2,e_3,e_4)\mapsto \frac{1}{f}(e_2,e_1$, $e_3$, $e_4)$ interchanges $(f,p)$ with $(\frac{1}{f},\frac{p}{f})$ whereas, if $p\neq 0$, the homothety $(e_1,e_2,e_3,e_4)$ $\mapsto$ $\frac{1}{p}(e_3,e_2,e_1,e_4)$ interchanges $(f,p)$ with $(\frac{f}{p},\frac{1}{p})$.
		
		 We use the identifications above to show that every metric in this case has a homothetic equivalent metric in Family~(3). Since $(f,p)\sim (p,f)$, we first restrict the domain of $(f,p)$ to $f\leq p$. Now, using the identification $(f,p)\sim (\frac{1}{f},\frac{p}{f})$, we remove the set $\{(f,p); f\leq p, |f|>1\}$ as every homothetic class has a representative with $|f|\leq 1$. Next, the identification $(f,p)\sim(\frac{f}{p},\frac{1}{p})$ allows us to remove the set $\{(f,p); f<p, |f|\leq 1,p>1\}$ so that the domain reduces to $\{(f,p); -1\leq f\leq p\leq 1\}$. 
Finally since $(-1,p)\sim (-1,-p)$ one can also eliminate the segment $\{(-1,p);p<0\}$. The points $(0,0)$, $(0,1)$, and $(1,1)$ are also excluded as the homogeneous space is symmetric in those cases.
	
\smallskip		
	
\paragraph{\underline{\emph{Case $a=1/3$ and $h\neq 0$}}} 
If $a=1/3$ then the only non-zero component of $\mathfrak{\overline F}$~is
		\[
		\mathfrak{\overline F}(e_2,e_3)=
		\tfrac{2}{81} h (3 f - 1)^3  (36 (f^2 - h^2) - 24 f - 5).
		\]
		Note that $3f-1\neq 0$ since we are assuming $a\neq f$. Hence  the left-invariant metric is given by
		\[
		[e_1,e_4]=\tfrac{1}{3} e_1,
		\quad 
		[e_2,e_4]=f e_2 + h e_3,
		\quad 
		[e_3,e_4]=-h e_2 -\left(f-\tfrac{2}{3}\right)e_3, 
		\]
		where $36(f^2 - h^2) - 24 f - 5=0$, and (\ref{eq: R3-t}) implies $t=-\frac{36h^2+5}{12h^2+11}\in(-3,-\frac{5}{11})$. A straightforward calculation shows that the Lie groups above are not algebraic Ricci solitons and, furthermore,   (\ref{eq: R3-energia}) implies that   $\energy_t = -\frac{16}{3}h^2$, so the energy is non-zero.
		Moreover, the isometry  $e_3\mapsto-e_3$ changes the sign of the parameter $h$, and $(e_1,e_2,e_3,e_4)\mapsto (e_1,e_3,-e_2,e_4)$ transforms the parameters $(f,h)$ into $(\frac{2}{3}-f,h)$. Hence we may consider $h>0$ and, once $h$ is fixed, we take $f>5/6$ given as the only positive solution of  $36(f^2 - h^2) - 24 f - 5=0$. Thus, Family~(6) is obtained.

	\subsubsection{Case $3f-1=0$, $b\neq 0$}
 The non-zero components of $\mathfrak{\overline F}$ are determined by
	\[
	\begin{array}{l}
	 \mathfrak{\overline F}(e_1,e_1)=- \mathfrak{\overline F}(e_2,e_2)= \mathfrak{\overline F}(e_3,e_3)= -\tfrac{2}{3}(3a-1)^2 (b^2-h^2),
	\\
	\noalign{\medskip}
	\mathfrak{\overline F}(e_1,e_2) = \tfrac{1}{81}(3a-1)^3 b (9 a^2 + 18 b^2 - 36 h^2 - 6 a - 8),
	\\
	\noalign{\medskip}
	\mathfrak{\overline F}(e_2,e_3) = \tfrac{1}{81} (3a-1)^3 h (9 a^2 - 36 b^2 + 18 h^2 - 6 a - 8).
	\end{array}
	\]
	Since $a\neq f$, we have $3a-1\neq 0$, so $b^2=h^2$ and $9 a^2 - 18 b^2 - 6 a - 8=0$. Therefore, in this case, a $\mathcal{F}_t$-critical metric is given by
	$$
	[e_1,e_4]=a e_1 + b e_2,
	\quad 
	[e_2,e_4]= -b e_1 +\tfrac{1}{3} e_2 + \varepsilon b e_3,
	\quad 
	[e_3,e_4]=-\varepsilon b e_2-\left(a-\tfrac{2}{3}\right) e_3, 
	$$
	where $\varepsilon^2=1$ and $9 a^2 - 18 b^2 - 6 a - 8=0$. Moreover, (\ref{eq: R3-t}) gives $t=-\frac{18b^2+7}{12b^2+10}\in(-\frac{3}{2},-\frac{7}{10})$ and  the energy, given by $\energy_t=-\frac{8}{3}b^2$ (see~(\ref{eq: R3-energia})), is non-zero.  Also, the isometry  $(e_1,e_2,e_3,e_4)\mapsto (-e_1,e_2,-e_3,e_4)$ changes the sign of $b$, whereas the isometry $e_1\mapsto -e_1$  transforms $(a,b,\varepsilon)$ into $(a,-b,-\varepsilon)$, so  we can take $\varepsilon=1$ and $b>0$.  Furthermore, the isometry $(e_1,e_2,e_3,e_4)\mapsto (e_3,-e_2,e_1,e_4)$ transforms the parameter $a$ into $\frac{2}{3}-a$, thus interchanging the two real roots for $a$ in the polynomial $9 a^2 - 18 b^2 - 6 a - 8=0$. Hence, we can take $a>4/3$, and Family~(7) is obtained.

	\subsection{\boldmath  Case  $a\neq f\neq p$, $a\neq p$, with $a+f+p\neq 0$ and $bch\neq 0$\unboldmath} 
	In this last case  we will show that no critical metrics different from those obtained in the previous sections  may exist under the conditions above. We will make use of Gröbner bases.
	
	As in the previous case, since $a+f+p\neq 0$,  we work with representatives of the homothetic classes satisfying $a+f+p=1$. Moreover, to explicitly express  that $a\neq f\neq p$, $a\neq p$, we introduce additional   variables $\varphi_1$,  $\varphi_2$, $\varphi_3$ and use the polynomials $(a-f) \varphi_1-1$, $(a-p) \varphi_2-1$ and $(f-p) \varphi_3-1$. Recall that, as in the previous sections \ref{case 5.2} and \ref{ref: R3 c=0}, $\mathfrak{\overline F}$ vanishes if and only if the components $\mathfrak{\overline F}(e_1,e_1)$, $\mathfrak{\overline F}(e_1,e_2)$, $\mathfrak{\overline F}(e_1,e_3)$ and $\mathfrak{\overline F}(e_2,e_3)$ are zero. Let   
	$\mathfrak{\overline F}'=
	\{\mathfrak{\overline F}(e_1,e_1),
	\mathfrak{\overline F}(e_1,e_2)$, $\mathfrak{\overline F}(e_1,e_3)$, $\mathfrak{\overline F}(e_2,e_3)\}$ be  the set of these components of $\mathfrak{\overline F}$. We consider the polynomial ring 
	$\mathbb{R}[\varphi_1,\varphi_2,\varphi_3,b,p,h,c,f,a]$ with the lexicographic order and denote by $\mathcal{I}$ the ideal generated by 
	\[
	\mathfrak{\overline F}' \cup\{a+f+p-1, (a-f) \varphi_1-1, (a-p) \varphi_2-1, (f-p) \varphi_3-1\}.
	\]
	Computing a Gröbner basis of    $\mathcal{I}$   we get $161$ polynomials, among which we find
	$$
	\begin{array}{l}
	\mathbf{g}= \frac{1}{3} c h (3a+3f-2) \mathbf{p} \mathbf{q} (3 a^2+3 f^2+3 a f-3 a-3 f+1),
	\end{array}
	$$
	where writing $a-f=\mu_1$ and $a-p=\mu_2$ (which implies $a=\frac{1}{3}(\mu_1+\mu_2+1)$) the factors $\mathbf{p}$ and $\mathbf{q}$ are given by 
	
	\smallskip
	
	\noindent
	$\small
	\begin{array}{l}	
	\mathbf{p} = 
	92 \mu_1^{12} + 92 \mu_2^{12} - 552 \mu_1^{11} \mu_2 + 
	1455 \mu_1^{10} \mu_2^2 - 2215 \mu_1^9 \mu_2^3 + 
	3807 \mu_1^8 \mu_2^4 - 8010 \mu_1^7 \mu_2^5
	\\
	\noalign{\medskip}
	\phantom{\mathbf{p} = }
	+ 
	10938 \mu_1^6 \mu_2^6 - 8010 \mu_1^5 \mu_2^7 + 
	3807 \mu_1^4 \mu_2^8 - 2215 \mu_1^3 \mu_2^9 + 
	1455 \mu_1^2 \mu_2^{10} - 552 \mu_1 \mu_2^{11}-359 \mu_1^{10}
	\\
	\noalign{\medskip}
	\phantom{\mathbf{p}=   }
	- 359 \mu_2^{10} + 1795 \mu_1^9 \mu_2 - 
	4683 \mu_1^8 \mu_2^2 + 7962 \mu_1^7 \mu_2^3 - 
	10539 \mu_1^6 \mu_2^4 + 11289 \mu_1^5 \mu_2^5
	\\
	\noalign{\medskip}
	\phantom{\mathbf{p}=  }
	- 
	10539 \mu_1^4 \mu_2^6 + 7962 \mu_1^3 \mu_2^7 - 
	4683 \mu_1^2 \mu_2^8 + 1795 \mu_1 \mu_2^9
	+402 \mu_1^8 + 402 \mu_2^8- 1608 \mu_1^7 \mu_2
	\\
	\noalign{\medskip}
	\phantom{\mathbf{p} = }
	+ 
	3615 \mu_1^6 \mu_2^2 - 5217 \mu_1^5 \mu_2^3 + 
	6018 \mu_1^4 \mu_2^4 - 5217 \mu_1^3 \mu_2^5
	+ 
	3615 \mu_1^2 \mu_2^6 - 1608 \mu_1 \mu_2^7-135 \mu_1^6 
	\\
	\noalign{\medskip}
	\phantom{\mathbf{p}=   }
	
	- 135 \mu_2^6 + 405 \mu_1^5 \mu_2 - 
	810 \mu_1^4 \mu_2^2+ 945 \mu_1^3 \mu_2^3
	- 
	810 \mu_1^2 \mu_2^4 + 405 \mu_1 \mu_2^5 ,
	
\end{array}
$

\medskip 

\noindent
$\small
\begin{array}{l}
\mathbf{q} =  
4 \mu_1^6 + 4 \mu_2^6 - 12 \mu_1^5 \mu_2 - 12 \mu_1 \mu_2^5 - 
3 \mu_1^4 \mu_2^2 - 3 \mu_1^2 \mu_2^4 + 26 \mu_1^3 \mu_2^3 - 
81 \mu_1^4 - 81 \mu_2^4 
\\
\noalign{\medskip}
\phantom{\mathbf{q} =  }
+ 162 \mu_1^3 \mu_2 + 
162 \mu_1 \mu_2^3 - 243 \mu_1^2 \mu_2^2 + 162 \mu_1^2 + 
162 \mu_2^2 - 162 \mu_1 \mu_2 .
\end{array}
$

\medskip

\noindent 
Note that the vanishing of the last factor in $\mathbf{g}$ implies  $a=f=1/3$, which contradicts the assumption $a\neq f$. Moreover, $ch\neq 0$, so we are led to the cases  $3a+3f-2=0$, $\mathbf{p}=0$ and $\mathbf{q}=0$.

\smallskip

\subsubsection{Case $3a+3f-2=0$} \label{R3: Case 6.3.2}
Since $a+f+p=1$, we get that $p=1/3$. Thus,  the polynomials in $\mathfrak{\overline F}'$ reduce to
\[
\begin{array}{l}
\mathfrak{\overline F}(e_1,e_1) = \tfrac{2}{3}(3a-1)^2 (c^2 - h^2 - 3 b c h),
\\
\noalign{\medskip}
\mathfrak{\overline F}(e_1,e_2) = \tfrac{1}{81} (3a-1)^3 b  (72 a^2 - 72 b^2 + 9 c^2 + 9 h^2 - 48 a - 10),
\\
\noalign{\medskip}
\mathfrak{\overline F}(e_1,e_3) = \tfrac{1}{81} (3a-1)^3 (c (9 a^2 - 9 b^2 + 18 c^2 - 36 h^2 - 6 a - 8) + 54 b h),
\\
\noalign{\medskip}
\mathfrak{\overline F}(e_2,e_3) = -\tfrac{1}{81} (3a-1)^3 ( h (9 a^2 - 9 b^2 - 36 c^2 + 18 h^2 - 6 a - 8) - 54 b c).
\end{array}
\]
A direct calculation shows that 
\[
c h (3a-1)\,  \mathfrak{\overline F}(e_1,e_1)
-h\, \mathfrak{\overline F}(e_1,e_3) 
-c\, \mathfrak{\overline F}(e_2,e_3)
= 
-\tfrac{2}{3} b (3a-1)^3 (c^2(3h^2+1)+h^2).
\]
Note that $b\neq 0$; $3a-1\neq 0$, since $a\neq \frac13=p$; and $c^2(3h^2+1)+h^2\neq 0$, since $c\neq 0$ and $h\neq 0$. So there are no critical metrics in this case.

\subsubsection{Case $\mathbf{p}=0$} 
In the polynomial ring   
$\mathbb{R}[\varphi_1,\varphi_2,\varphi_3,a,f,p,b,c,h]$ with the lexicographic order we  consider the ideal $\mathcal{I}_1$ generated by $\mathcal{I}\cup \{\mathbf{p}\}$ and compute a 
Gröbner basis of    $\mathcal{I}_1$, which contains $111$ polynomials. These polynomials  are quite complicated and it is necessary a more detailed analysis of the basis to solve this case.
Among the  elements of $\mathcal{I}_1$ there are just five which  depend only on $c$ and $h$. We are interested in two of them, which are
\[
\begin{array}{l}
\mathbf{g}_{1}^1 = 
c h P_1(c,h) (5h^2+1)^2(128 h^2+49) S_{1}(h),\,\,\text{and }

\\
\noalign{\medskip}

\mathbf{g}_{1}^2 =
h P_1(c,h) (\alpha_1 c^4+T_{1}^1(h)c^2+T_{1}^2(h)),
\end{array}
\]
where   $P_1(c,h)$ is a symmetric polynomial, 
$\alpha_1>0$,
and $S_1(h)$, $T_{1}^1(h)$ and $T_{1}^2(h)$ are polynomials with only even  powers of $h$. Recall that  $c\neq 0$ and $h\neq 0$. The polynomial  $P_1(c,h)$ vanishes if and only if $c=h=0$, so $P_1(c,h)\neq 0$.   Hence, $S_1(h)$ and $\alpha_1 c^4+T_{1}^1(h)c^2+T_{1}^2(h)$ must vanish. Finally, we consider the ideal $\mathcal{J}_1$ generated by 
$\{S_1(h),\alpha_1 c^4+T_{1}^1(h)c^2+T_{1}^2(h)\}$ in the polynomial ring $\mathbb{R}[c,h]$. We compute  a Gröbner basis with the graded reverse  lexicographic order and obtain $6$ polynomials, among which we have  
\small\[
\begin{array}{l}
\mathbf{g}_{_{\mathcal{J}_1}} = 
247878727200 c^8
+3 (12047491687221 h^4 + 13004016701628 h^2 + 4019608368416) c^6 

\\
\noalign{\medskip}
\phantom{\mathbf{g}}

+ (36142475061663 h^6 - 16246524285081 h^4 + 44177371142781 h^2 + 
27149520949987) c^4

\\
\noalign{\medskip}
\phantom{\mathbf{g}}

+ (39012050104884 h^6 + 44177371142781 h^4 + 36467803675675 h^2 + 
8034850546195) c^2

\\
\noalign{\medskip}
\phantom{\mathbf{g}}

+12058825105248 h^6 + 247878727200 h^8 + 27149520949987 h^4 + 
8034850546195 h^2 
\\
\noalign{\medskip}
\phantom{\mathbf{g}}

+ 638325404640 .
\end{array}
\]\normalsize
Clearly, this polynomial has no real roots, so  we conclude that no critical metric may exist in this case.

\subsubsection{Case $\mathbf{q}=0$} 
Let $\mathcal{I}_2$ be the ideal  generated by 
$\mathfrak{\overline F}'\cup\{a+f+p-1, \mathbf{q}\}
$ in the polynomial ring 
$\mathbb{R}[p,a,f,b,c,h]$.  
We compute  a Gröbner basis $\mathcal{B}_2$ of    $\mathcal{I}_2$ with respect to the lexicographic order, obtaining  $85$ polynomials, and see that
$$\small
\begin{array}{l}
\mathbf{g}_{2}^1=  c^2 h^2  (3f-1)^3 (12h^2+1)^2 
\left((b^2 - c^2)^2 + (b^2 - h^2)^2 - (b^2 - c^2) (b^2 - h^2)\right)
\mathbf{p}_{2}^1 \,\mathbf{q}_{2}^1
\end{array}
$$
belongs to the basis, where

\medskip

\noindent
$\small
\begin{array}{l}
\mathbf{p}_{2}^1=
144 (c^8 + h^8)
+ 84 (c^6 + h^6) (9 c^2 h^2 + 1) 
+ (c^4 + h^4) (1296 c^4 h^4 + 153 c^2 h^2 + 16)

\\
\noalign{\medskip}
\phantom{\mathbf{p}_{2}^1=}

+ (c^2 + h^2) (9 c^2 h^2 + 1) (81 c^4 h^4 - 102 c^2 h^2 + 1)
- c^2 h^2 (1377 c^4 h^4 + 864 c^2 h^2 + 17) ,

\end{array}
$

\medskip

\noindent
$\small
\begin{array}{l}
\mathbf{q}_{2}^1 =
13089195000 h^{18} - 150198512625 h^{16} + 575974443600 h^{14} - 
464946766740 h^{12}

\\
\noalign{\medskip}
\phantom{\mathbf{ q}_{2}^1 =}

- 1169010941202 h^{10} + 1352051494317 h^8 + 
531977212062 h^6 - 89013307725 h^4

\\
\noalign{\medskip}
\phantom{\mathbf{q}_{2}^1= }

+ 3173480100 h^2 - 203889500 .
\end{array}
$

\medskip

\noindent
Recall that  $c\neq 0$ and $h\neq 0$. By Remark~\ref{re:iso-R3}, the case  $3f-1=0$ can be reduced to the case in~$\S\ref{R3: Case 6.3.2}$ ($3a+3f-2=0$). Hence, we have three different possibilities corresponding to the last three factors in $\mathbf{g}_{2}^1$:
\begin{itemize}
\item[(i)] If $(b^2 - c^2)^2 + (b^2 - h^2)^2 - (b^2 - c^2) (b^2 - h^2)=0$ then $b^2=c^2=h^2$ and, under these conditions, we have 
\[
\mathfrak{\overline F}(e_1,e_1) =
-2 b c  h \left( (3 a - 1)^2 + (3 f - 1)^2 + (3 a - 1) (3 f - 1)\right)  .
\]
Since $ b c  h\neq 0$, it follows that
$a=f=1/3$, which is not possible since we are assuming $a\neq f$.

\smallskip

\item[(ii)]  If  $\mathbf{p}_{2}^1=0$, then the study of this  symmetric polynomial shows that the only real solutions are given by $c=h=0$ and $c^2=h^2=1/3$, but $ch\neq 0$.
Hence, in the polynomial ring  $\mathbb{R}[p,a,f,b,c,h]$ with the lexicographic order we consider the ideal $\mathcal{I}_{21}=\langle \mathcal{I}_2\cup \{3c^2-1, 3h^2-1 \}\rangle$ and compute a Gröbner basis. We get $9$ polynomials, being one of them  
\[
\mathbf{g}_{21}=(3b^2-1)(3b^2+1)(3f-1)^3.
\]
Thus, it follows that $b^2=1/3$ and this case reduces to the previous one.

\smallskip

\item[(iii)]  Finally, if $\mathbf{ q}_{2}^1=0$, a detailed analysis of the Gröbner basis $\mathcal{B}_2$ of $\mathcal{I}_2$ shows that there exists  just one polynomial with the form
\[
\mathbf{g}_{2}^2 = c h^2  (3f-1)^3  
\left((b^2 - c^2)^2 + (b^2 - h^2)^2 - (b^2 - c^2) (b^2 - h^2)\right)
\mathbf{p}_{2}^1 Q_2(c,h),
\]
where   $Q_2(c,h)=(\alpha_2 c^4+T_{2}^1(h)c^2+T_{2}^2(h))$,  $\alpha_2>0$ and with  $T_{2}^1(h)$ and $T_{2}^2(h)$  polynomials with only even  powers of $h$.
Excluding the factors previously considered, we have that     $Q_2(c,h)$  must vanish.
Now,  we compute a Gröbner basis  for the ideal $\mathcal{J}_{2}=\langle \{\mathbf{ q}_{2}^1, Q_2(c,h)\}\rangle$ in the polynomial ring $\mathbb{R}[c,h]$ with the graded reverse lexicographic order. As a result we get $6$ polynomials, which include the following
\[
\small
\begin{array}{l}

\mathbf{g}_{_{\mathcal{J}_2}} =
-162741604855409038884600 c^8

\\
\noalign{\medskip}
\phantom{\mathbf{g}_{_{\mathcal{J}_2}} =}

+15  (253026638662278691056852 h^4 -
383142569625180082065192 h^2 

\\
\noalign{\medskip}
\phantom{\mathbf{g}_{_{\mathcal{J}_2}} =}

+   57044648695561578739735) c^6

\\
\noalign{\medskip}
\phantom{\mathbf{g}_{_{\mathcal{J}_2}} =}

+ (3795399579934180365852780 h^6 - 
12444355698616995995236734 h^4 

\\
\noalign{\medskip}
\phantom{\mathbf{g}_{_{\mathcal{J}_2}} =}

+ 441593778374609756435607 h^2 + 
248544510738030291383903) c^4

\\
\noalign{\medskip}
\phantom{\mathbf{g}_{_{\mathcal{J}_2}} =}

- (5747138544377701230977880 h^6 - 441593778374609756435607 h^4 

\\
\noalign{\medskip}
\phantom{\mathbf{g}_{_{\mathcal{J}_2}} =}

- 16582434176665032560612153 h^2 + 2739047229386473655704675) c^2

\\
\noalign{\medskip}
\phantom{\mathbf{g}_{_{\mathcal{J}_2}} =}

-162741604855409038884600 h^8 + 855669730433423681096025 h^6 

\\
\noalign{\medskip}
\phantom{\mathbf{g}_{_{\mathcal{J}_2}} =}

+ 248544510738030291383903 h^4 - 
2739047229386473655704675 h^2 

\\
\noalign{\medskip}
\phantom{\mathbf{g}_{_{\mathcal{J}_2}} =}

- 60336068254303466865600  .

\end{array}
\]
Despite the length of this polynomial, it is straightforward to check that it never vanishes for the only two real  solutions, $h=\pm  0,33844287\dots$, of $\mathbf{q}_{2}^1=0$. Thus,  no critical metric may exist in this case. 
\end{itemize} 
\end{proof}

\begin{remark}\rm 
The metric in Theorem~\ref{th:criticas-R3}--(1) corresponds to the algebraic Ricci soliton (i) in Section~\ref{ss:1-3-2}. The left-invariant metric in Theorem~\ref{th:criticas-R3}--(2) is the algebraic Ricci soliton (ii), while   metrics  in Theorem~\ref{th:criticas-R3}--(3)   correspond to (iii) in Section~\ref{ss:1-3-2}.
\end{remark}

As a consequence of the previous analysis we have the following relation between Ricci solitons and critical metrics with zero energy.

\begin{corollary}\label{cor:hoy}
	Let $(M,g)$ be a connected and simply connected homogeneous four-dimensional manifold. Then $g$ is critical for a quadratic curvature functional with zero energy if and only if it is a Ricci soliton or, otherwise, it is homothetic to a metric in Families (4) or (5) in Theorem~\ref{th:criticas-R3}.
\end{corollary}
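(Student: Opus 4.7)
The plan is to establish both implications by appealing to the full classification already assembled in Sections \ref{ss:symmetric}--\ref{se:R}, so the proof is really an organization of what has been done. For the \emph{if} direction, one invokes two facts: first, the observation quoted in Section~\ref{ss:1-3-2} that every four-dimensional homogeneous Ricci soliton is $\mathcal{F}_t$-critical with zero energy for some $t$ (Einstein metrics give energy zero for $\mathcal{F}_{-1/4}$, rigid gradient solitons $\mathbb{R}^\ell\times N^{4-\ell}(\kappa)$ fall in cases (ii)--(iii) of Section~\ref{ss:symmetric} with $t\in\{-1/3,-1/2\}$, and non-symmetric expanding solitons appear in Lauret's list (i)--(iv)); second, the explicit verification in Theorem~\ref{th:criticas-R3} that the left-invariant metrics in Families~(4) and (5) on $\mathbb{R}\ltimes \mathbb{R}^3$ are $\mathcal{F}_t$-critical with $\energy_t=0$ for the stated values of $t$.

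For the \emph{only if} direction, I would walk through the zero-energy critical metrics case by case, using the Lie-algebraic trichotomy from Section~\ref{ss:4D-groups}. In the symmetric case, Section~\ref{ss:symmetric} shows the critical metrics with zero energy are exactly those in (i)--(iii), all of which are rigid gradient Ricci solitons by Petersen--Wylie. Among non-symmetric left-invariant metrics, Theorem~\ref{th:non-solvable} leaves no critical examples on $\widetilde{SL}(2,\mathbb{R})\times\mathbb{R}$ or $SU(2)\times\mathbb{R}$ beyond the symmetric $\mathbb{R}\times\mathbb{S}^3$; Theorem~\ref{th:E(1,1)} shows that the only zero-energy non-symmetric critical metric on $\mathbb{R}\ltimes E(1,1)$ or $\mathbb{R}\ltimes \widetilde{E}(2)$ is Family~(1), identified in Remark~\ref{re:xxx} with an algebraic Ricci soliton; Theorem~\ref{th:heisenberg} reduces the zero-energy non-symmetric critical metrics on $\mathbb{R}\ltimes\mathcal{H}^3$ to Families~(1)--(3), all algebraic Ricci solitons as stated there (with the identifications in Remark~\ref{re:xx}); finally, Theorem~\ref{th:criticas-R3} shows that on $\mathbb{R}\ltimes\mathbb{R}^3$ the zero-energy non-symmetric critical metrics form Families~(1)--(5), of which (1)--(3) are algebraic Ricci solitons and (4)--(5) are not.

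So the entire argument reduces to collating the ``algebraic Ricci soliton'' flag in each family from the four classification theorems and observing that exactly Families~(4) and (5) of Theorem~\ref{th:criticas-R3} are the non-soliton residue. The main subtlety, and the step I expect to require the most care, is the non-soliton status of Families~(4) and (5) in Theorem~\ref{th:criticas-R3}: one must confirm that no change of basis in $\mathbb{R}\ltimes\mathbb{R}^3$ can turn $\operatorname{Ric}-\lambda\operatorname{Id}$ into a derivation (equivalently, show these metrics are not isometric to any algebraic soliton in Lauret's list from Section~\ref{ss:1-3-2}), and then invoke \cite{Jablonski1,Arroyo-Lafuente} to upgrade this to non-existence of any Ricci soliton structure; the homothetic invariants $\{t,\|\rho\|^2\tau^{-2},\|R\|^2\tau^{-2},\|\nabla\rho\|^2\tau^{-3},\|\nabla R\|^2\tau^{-3}\}$ used in Remark~\ref{re:grafico-R3} furnish the tool to separate these families from the soliton list. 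Once this point is handled, the biconditional follows directly.
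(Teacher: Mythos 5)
Your proposal is correct and follows essentially the same route as the paper: collate the zero-energy critical metrics from Theorems~\ref{th:non-solvable}, \ref{th:E(1,1)}, \ref{th:heisenberg} and \ref{th:criticas-R3} (together with the symmetric cases), observe that the non-soliton residue is exactly Families~(4) and (5) of Theorem~\ref{th:criticas-R3}, and then separate these from the list of homogeneous Ricci solitons via homothetic invariants — the paper uses the value of $t$ itself for Family~(4) and for all of Family~(5) except the single overlap at $t=-3$, where it compares $\|\nabla\rho\|^2\tau^{-3}$ against the soliton of Theorem~\ref{th:criticas-R3}--(1). Your explicit appeal to \cite{Arroyo-Lafuente, Jablonski1} to upgrade ``not homothetic to an algebraic Ricci soliton'' to ``not a Ricci soliton'' is the same reduction the paper makes implicitly through Section~\ref{ss:1-3-2}.
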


\begin{proof}
 In view of  Theorems~\ref{th:non-solvable}, \ref{th:E(1,1)},  \ref{th:heisenberg} and \ref{th:criticas-R3}, the critical metrics with zero energy which are not algebraic Ricci solitons are homothetic to a metric in Families~(4) or (5) in Theorem~\ref{th:criticas-R3} (see Figures~\ref{figura-3} and \ref{figura-R}).
It remains to show that metrics in these two families are not Ricci solitons, so we check that they do not belong to the homothetic class of an algebraic Ricci soliton. 
It follows from Section~\ref{ss:1-3-2} that non-Einstein Ricci solitons are $\mathcal{F}_t$-critical for $t=-1/2$ or $t=-1/3$ in the symmetric case. Moreover, they are  $\mathcal{F}_t$-critical for $t=-3$, $t=-3/2$, or for  $-1\leq t<-1/4$ otherwise  (see Figure~\ref{figura-2}).

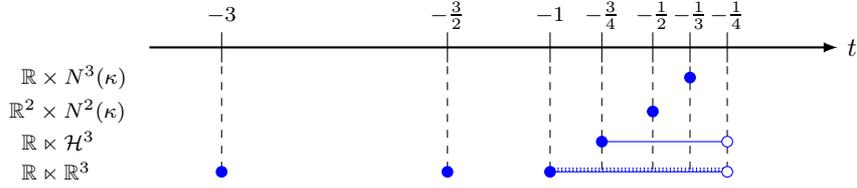
\begin{figure}[h]
	\centering
	\begin{tikzpicture}
	% Linea eje t
	\draw[thick,-latex]   (-4.4-0.15,0) -- (4.50,0) node[right] {$t$}; 
			
%	\draw[thick,-latex]   (-5.7,0) -- (4.50,0) node[right] {$t$};  
	\foreach \i/\n in 
	{-3.6,  -0.63, 0.72, 1.40, 2.07, 2.56, 3.05
	}{\draw (\i,-.2)--(\i,.2);}
	
	\foreach \i/\n in 
	{-3.6/$-3$,    0.72/$-1$ 
	}{\draw  (\i,.2) node[above] {\footnotesize \n};}
	\foreach \i/\n in 
	{-0.63/$-\frac{3}{2}$, 1.40/$-\frac{3}{4}$, 2.07/$-\frac{1}{2}$, 2.56/$-\frac{1}{3}$, 3.05/$-\frac{1}{4}$
	}{\draw  (\i,.12) node[above] {\footnotesize \n};}
	
	\foreach \i/\n in 
	{-3.6,  -0.63, 0.72, 1.40, 2.07, 2.56, 3.05
	}{\draw[dashed] (\i,-.25)--(\i,-1.65);} 
	
	%%%%% R x N^3 %%%%%
	\node at (-5.75,-0.4) {\footnotesize \phantom{.....}$\mathbb{R}\times N^3(\kappa)$ }; 
	\filldraw [color=\ColorECeroARS,fill=\ColorECeroARS] (2.56,-0.4) circle (2pt);
%	\node[anchor=west] at (3.6,-0.45) {\text{\footnotesize\black $SU(2)\times\mathbb{R}$}};
	
	%%%%% R^2 x N^2 %%%%%
	\node at (-5.75,-0.85) {\footnotesize \phantom{...}$\mathbb{R}^2\times N^2(\kappa)$ }; 
	\filldraw [color=\ColorECeroARS,fill=\ColorECeroARS] (2.07,-0.85) circle (2pt);
	
	%%%%% R x H^3 %%%%%
	\node at (-5.75,-1.25) {\footnotesize $\mathbb{R}\ltimes \mathcal{H}^3$ }; 
	\draw[color=\ColorECeroARS] (1.4,-1.25)--(3.05,-1.25);
	\filldraw [color=\ColorECeroARS,fill=\ColorECeroARS] (1.4,-1.25) circle (2pt);
	\filldraw [color=\ColorECeroARS,fill=white] (3.05,-1.25) circle (2pt);
	
	%%%%% R x R^3 %%%%%
	\node at (-5.75,-1.65) {\footnotesize $\mathbb{R}\ltimes \mathbb{R}^3$\,}; 
	
	\filldraw [color=\ColorECeroARS,fill=\ColorECeroARS] (-3.6,-1.65) circle (2pt);
	
	\filldraw [color=\ColorECeroARS,fill=\ColorECeroARS] (-0.63,-1.65) circle (2pt);
	
	\draw[color=\ColorECeroARS] (0.72,-1.65)--(3.05,-1.65);
	\draw[densely dotted,color=\ColorECeroARS] 	
	 (0.72,-1.60)--(3.05,-1.60);
	 \draw[densely dotted,color=\ColorECeroARS] 	
	 (0.72,-1.61)--(3.05,-1.61);
	\draw[densely dotted,color=\ColorECeroARS] 	
	 (0.72,-1.64)--(3.05,-1.64);
	\draw[densely dotted,color=\ColorECeroARS] 	
	 (0.72,-1.63)--(3.05,-1.63);
	\filldraw [color=\ColorECeroARS,fill=\ColorECeroARS] (0.72,-1.65) circle (2pt);
	\filldraw [color=\ColorECeroARS,fill=white] (3.05,-1.65) circle (2pt);
	\end{tikzpicture}
	\caption{Range of the critical parameter $t=-\tau^{-2}\|\rho\|^2$ for non-Einstein homogeneous Ricci solitons.}\label{figura-2}
\end{figure}

Metrics in Theorem~\ref{th:criticas-R3}--(4) are not  Ricci solitons since the corresponding values of the parameter $t$ do not lie in the range of critical values for algebraic Ricci solitons.
For metrics in Theorem~\ref{th:criticas-R3}--(5) we only have to analyze the case of $\mathcal{F}_{-3}$-criticality, which corresponds   to  $p=\frac{1}{16}(1-\sqrt{33})$. For this particular value of $p$, the third order homothetic invariant $\frac{\|\nabla\rho\|^2}{\tau^3}$ is given by $\frac{\|\nabla\rho\|^2}{\tau^3}=-\frac{776}{81}$.
On the other hand, algebraic Ricci solitons with $t=-3$ correspond to metrics in    $\mathbb{R}\ltimes\mathbb{R}^3$ given by Theorem~\ref{th:criticas-R3}--(1), in which case the same homothetic invariant is given by $\frac{\|\nabla\rho\|^2}{\tau^3}=-8$. This shows that metrics in Theorem~\ref{th:criticas-R3}--(5) with  $p=\frac{1}{16}(1-\sqrt{33})$ are not Ricci solitons either.
\end{proof}

\begin{remark}\rm
	Nilpotent Lie groups whose structure constants are rational admit compact quotients (nilmanifolds). This is, for example, the case of Families~(1) and (3) in Theorem~\ref{th:heisenberg}, and  Families~(1) and (2) in Theorem~\ref{th:criticas-R3}, where the metrics descend  to  critical metrics with zero energy in the nilmanifolds. 
	
	In the more general solvable case, the existence of compact lattices is a subtle question. It was shown in \cite{Milnor} that the Lie group must be unimodular in order to admit a lattice.
	Non-nilpotent solvable Lie groups admitting compact quotients correspond to the following Lie algebras (see \cite{Bock}, and also \cite{Ovando} for the notation)
	$\mathfrak{r}\mathfrak{r}_{3,-1}$, 
	$\mathfrak{r}\mathfrak{r}'_{3,0}$,
	$\mathfrak{r}_{4,\alpha,-(1+\alpha)}$,
	$\mathfrak{r}'_{4,-\frac{1}{2},\delta}$,
	$\mathfrak{d}_4$, and $\mathfrak{d}'_{4,0}$.
	Notice that the Lie groups associated to the Lie algebras above do not admit a lattice for all the values of the parameters, but they do for some of them.
	For instance, unimodular groups of type $\mathbb{R}\ltimes\mathbb{R}^3$ as in Theorem~\ref{th:criticas-R3}--(3) with $p=-(f+1)$ admit compact lattices if $f\neq 1$ and $e$, $e^f$, $e^p$ are solutions of $\lambda^3-m\lambda^2+n\lambda-1=0$ with $m,n\in\mathbb{N}$ \cite{Wall}. It was shown in \cite{LLSY} that this is the case for a countable number of $f$'s.  If $f=1$, $\mathbb{R}\ltimes\mathbb{R}^3$ does not admit a lattice \cite{LLSY}.
	
	It follows from Theorem~\ref{th:heisenberg} and Theorem~\ref{th:criticas-R3} that there are no $\mathcal{F}_t$-critical metrics on $\mathfrak{d}_4$ and $\mathcal{F}_t$-critical metrics on $\mathfrak{r}\mathfrak{r}'_{3,0}$ are flat. Furthermore $\mathfrak{d}'_{4,0}$ and $\mathfrak{r}'_{4,-\frac{1}{2},\delta}$ admit $\mathcal{F}_t$-critical metrics which are isometric to $\mathcal{F}_t$-critical metrics on $\mathfrak{rh}_3$ and $\mathfrak{r}_{4,-\frac{1}{2},-\frac{1}{2}}$, respectively. Therefore the possible solvmanifolds corresponding to the solvable Lie algebras
	$\mathfrak{rh}_3$, $\mathfrak{n}_4$,
	$\mathfrak{r}\mathfrak{r}_{3,-1}$, 
	$\mathfrak{r}_{4,\alpha,-(1+\alpha)}$,
	$\mathfrak{r}'_{4,-\frac{1}{2},\delta}$, and $\mathfrak{d}'_{4,0}$ admit non-symmetric $\mathcal{F}_t$-critical metrics with zero energy for $t=-3$, $t=-3/2$, and $t=-1$.
	
	We emphasize that although the left-invariant metrics descend to the quotient manifolds, the Ricci soliton vector fields determined by the algebraic Ricci soliton structure do not pass to the quotient. The relation between Ricci solitons and $\mathcal{F}_t$-critical metrics with zero energy is therefore not valid in the non simply connected case.
\end{remark}

\end{document}